\def\Hom{\mathop{\rm Hom}}
\newcommand{\nc}{\mathsf}
\newcommand{\ca}{\mathcal}
\newcounter{oftheorem}[section]
\newenvironment{mytheorem}[1]%
{\begin{trivlist}
		
		\refstepcounter{oftheorem}
		\item[\hspace{\labelsep}\bf\thesection.\arabic{oftheorem} #1.]}%
	{\end{trivlist}}
\newenvironment{definition}{\begin{mytheorem}{Definition}}{\end{mytheorem}}
\newenvironment{lemma}{\begin{mytheorem}{Lemma}\it}{\end{mytheorem}}
\newenvironment{proposition}{\begin{mytheorem}{Proposition}\it}{\end{mytheorem}}
\newenvironment{theorem}{\begin{mytheorem}{Theorem}\it}{\end{mytheorem}}
\newenvironment{corollary}{\begin{mytheorem}{Corollary}\it}{\end{mytheorem}}
\newenvironment{remark}{\begin{mytheorem}{Remark}}{\end{mytheorem}}
\newenvironment{example}{\begin{mytheorem}{Example}}{\end{mytheorem}}
\begin{document}
	
\title{The Exact Completion for Regular Categories enriched in Posets}		
\author{Vasileios Aravantinos-Sotiropoulos}

\begin{abstract}
	We construct an exact completion for regular categories enriched in the cartesian closed category $\mathsf{Pos}$ of partially ordered sets and monotone functions by employing a suitable calculus of relations. We then characterize the embedding of any regular category into its completion and use this to obtain examples of concrete categories which arise as such completions. In particular, we prove that the exact completion in this enriched sense of both the categories of Stone and Priestley spaces is the category of compact ordered spaces of L. Nachbin. Finally, we consider the relationship between the enriched exact completion and categories of internal posets in ordinary categories.
\end{abstract}

\maketitle

\section{Introduction}

The notions of regularity and (Barr-)exactness have been fundamental in Category Theory for quite some time. Exactness was introduced by Barr\cite{Barr} in 1970 and motivated by a result of Tierney which essentially exhibited the notion as the non-additive part of the definition of abelian category. From another perspective, it is the basic property which is common to both abelian categories and elementary toposes. Regularity is a weaker property which can be viewed as the requirement that the category affords a good calculus of internal relations. Alternatively, from the perspective of Categorical Logic, regular categories are those which correspond to the fragment of first-order Logic on the operations $\land,\top,\exists$.

In this paper we look at these notions in an enriched setting. More precisely, we work with versions of them that apply to categories enriched over the cartesian closed category $\nc{Pos}$ of partially ordered sets and monotone functions. Our main motivation comes from the paper \cite{Kurz-Velebil} by A. Kurz and J. Velebil, where $\nc{Pos}$-enriched regularity and exactness were first explicitly considered. The authors employ these notions to obtain categorical characterizations of (quasi-)varieties of ordered algebras in the sense of Bloom \& Wright\cite{Bloom & Wright}, very much along the lines of the corresponding characterizations for ordinary (quasi-)varieties of Universal Algebra. Broadly speaking, varieties turn out to be the exact categories possessing a ``nice'' generator, while quasivarieties can be characterized in a similar fashion by replacing exactness with the weaker regularity.

Recall here that \emph{ordered algebras} in the sense of \cite{Bloom & Wright} are algebras over some signature $\Sigma$ which consist of a poset $X$ together with a monotone map $[\sigma]\colon X^{n}\to X$, for each specified $n$-ary operation $\sigma$. A \emph{homomorphism} of such algebras is a monotone map which preserves the operations. Then a \emph{variety} in this context is defined as a class of ordered algebras satisfying a set of formal inequalities $s\leq t$, where $s,t$ are $\Sigma$-terms. A \emph{quasivariety} is a class defined by more general formal implications of the form $\bigwedge\limits_{i\in I}(s_{i}\leq t_{i})\implies s\leq t$, where again the $s_{i},t_{i},s,t$ are $\Sigma$-terms.

The categories $\nc{OrdSGrp}$ and $\nc{OrdMon}$ of ordered semigroups and ordered monoids respectively are both examples of varieties which play an important role in the theory of automata. More generally, any quasivariety of ordinary algebras gives rise to a quasivariety of ordered algebras defined by the same axioms. A different example of quasivariety is given by the \emph{cancellative} ordered monoids $\nc{OrdMon_{can}}$, i.e. the ordered monoids $(M,\cdot,\leq)$ satisfying the implications $x\cdot z\leq y\cdot z\implies x\leq y$ and $z\cdot x\leq z\cdot y\implies x\leq y$ for all $x,y,z\in M$. A further source of examples is furnished by ordinary varieties whose axioms contain those of semi-lattices, since they can be equipped with the equationally definable order $x\leq y\iff x\vee y=y$. Yet more examples of quasivarieties are given by the \emph{Kleene algebras} of Logic.

While (quasi-)varieties of ordered algebras are a central source of examples of $\nc{Pos}$-enriched categories, there are other interesting examples that will appear in the present paper. For one, we have the category $S$-$\nc{Pos}$ \cite{S-Pos} of monotone actions of an ordered monoid $S$ on a poset and monotone equivariant maps between them. Furthermore, there are categories of ordered topological spaces, such as Priestley spaces or the \emph{compact ordered spaces} of Nachbin. These are all examples of categories which are either themselves monadic over $\nc{Pos}$ or reflective in a monadic category.

The thread of this paper can be seen as one continuation of the ideas developed in \cite{Kurz-Velebil} and is in part suggested by the authors at the end of the latter paper. At the same time it is part of a growing recent interest in the categorical treatment of ordered algebras, as for example in the recent preprints \cite{ADV} and \cite{AFMS}. Our main contribution here is a construction of the \emph{exact completion of a regular category} for $\nc{Pos}$-categories which employs a suitably enriched version of the \emph{calculus of relations}. We then identify varieties of ordered algebras which occur as such completions of corresponding (quasi-)varieties of ordered or unordered algebras. Furthermore, we prove that the exact completion of the category of \emph{Priestley spaces} is precisely the category of \emph{Nachbin spaces}. This provides an ordered version of the folklore result which identifies the category of compact Hausdorff spaces as the exact completion (in the ordinary sense) of the regular category of Stone spaces. In fact, it will follow by the same token that the exact completion of Stone spaces in the enriched sense is also the category of Nachbin spaces.

\subsection*{Organization of the paper}

In section 2 we collect some preliminaries involving regularity for categories enriched over $\nc{Pos}$, mostly for the convenience of the reader. There is only one original contribution here, \ref{simplifying regularity}, which provides a simplification of the definition of regularity that was presented in \cite{Kurz-Velebil}. More precisely, we prove that one of the defining conditions is a consequence of the other three and  can thus be omitted.

Section 3 discusses the main aspects of the calculus of relations which is available in any regular category. The main result in this section is \ref{Pos-enriched maps are morphisms}, which identifies the morphisms of a regular category as the left adjoints in a suitable bicategory of relations.

Section 4 represents the crux of the paper and is where we construct the exact completion of a regular category. After the initial definition, we prove in a sequence of steps that our proposed construction indeed satisfies the required properties, culminating in \ref{Universal property of ex/reg}. The arguments here make extensive use of the calculus of relations relying on the previous section.

In section 5 we characterize the embedding of a regular category into its completion. This is subsequently used to obtain examples of categories which arise as exact completions of one or more of their regular subcategories. In particular, we show that the category of Nachbin spaces is exact and can be obtained as the completion of either the category of Priestley or Stone spaces.

Finally, in section 6 we examine the relationship between the process of exact completion and that of taking internal posets in an ordinary category. We prove that, in a suitable sense, these two commute.

\section{Preliminaries on Regularity}

In this section we collect some preliminaries concerning the notion of regularity for categories enriched over the cartesian closed category $\nc{Pos}$ of posets and order-preserving functions, as defined by Kurz and Velebil in \cite{Kurz-Velebil}. After recalling some basic facts about finite limits, we reexamine the definition of regularity and observe that one of the conditions therein is in fact redundant.

Throughout the paper by `a category $\mathcal{C}$' we shall always mean a category that is enriched over the cartesian closed category $\nc{Pos}$ of partially ordered sets and monotone functions. Explicitly, this means that $\mathcal{C}$ is a category such that each $\Hom_{\mathcal{C}}(X,Y)$ is equipped with a partial order relation and such that composition of morphisms is order-preserving in each variable. If we wish to refer to categories in the usual non-enriched sense we will always use the adjective `ordinary'.

A functor $F\colon\ca{C}\to\ca{D}$ will always mean a $\nc{Pos}$-functor, i.e. an ordinary functor that furthermore preserves the order of morphisms.

Similarly, whenever we speak of limits or colimits in a category $\ca{C}$, these will always mean \emph{weighted} (co-)limits (also called \emph{indexed} (co-)limits in \cite{Kelly Enriched Categories}). We know from \cite{Kelly Enriched Categories} that completeness of a category $\mathcal{C}$, i.e. the existence of all small weighted limits, is equivalent to the existence in $\ca{C}$ of all small conical limits and all powers. The former of these can in turn be constructed via products and equalizers, so that $\ca{C}$ is complete if and only if it possesses products, equalizers and powers. Recall here that the power of an object $X\in\ca{C}$ to a poset $P$ is an object $X^{P}\in\ca{C}$ for which there exists a natural isomorphism
\begin{displaymath}
\ca{C}(C,X^{P})\cong\mathrm{Hom}_{\nc{Pos}}(P,\ca{C}(C,X))
\end{displaymath}

When the base of enrichment is locally finitely presentable as a monoidal category\cite{Finite Limits Enriched}, as in our case with $\nc{Pos}$, there is also a useful notion of \emph{finite} weighted limit. In particular, we have that $\ca{C}$ is finitely complete if and only if it has finite products, equalizers and finite powers. By \emph{finite power} here we mean a power object $X^{P}$ where $P$ is a finitely presentable object in $\nc{Pos}$, i.e. a finite poset.
\vspace{3mm}

We begin by recalling some basic notions, most of which can also be found in \cite{Kurz-Velebil}. First, the notion of monomorphism that is more appropriate in the ordered context and which will form part of the factorization system leading to the notion of regularity for $\nc{Pos}$-categories.

\begin{definition}
	A morphism $m\colon X\to Y$ in a category $\ca{C}$ is called an \emph{$\nc{ff}$-morphism} (or \emph{representably fully faithful}, or an \emph{order-monomorphism}) if for every $Z\in\ca{C}$ the monotone map $\ca{C}(Z,m)\colon\ca{C}(Z,X)\to\ca{C}(Z,Y)$ in $\nc{Pos}$ also reflects the order.
\end{definition}

We shall use the terms ``$\nc{ff}$-morphism'' and ``order-monomorphism'' interchangeably throughout the paper. Furthermore, we will use the term `order-epimorphism' for the dual notion.

Explicitly, $m\colon X\to Y$ is an $\nc{ff}$-morphism when for every $f,g\colon Z\to X$ the implication $mf\leq mg\implies f\leq g$ holds. In $\nc{Pos}$, $m$ is an $\nc{ff}$-morphism precisely when it is an order-embedding, i.e. a map which preserves and reflects the order. Any such map is of course a monomorphism, but the converse is not true.

The shift from monomorphisms to $\nc{ff}$-morphisms is also essentially the difference between the notion of conical (weighted) limit in a category $\ca{C}$ and the ordinary limit of the same type in the underlying ordinary category $\ca{C}_{0}$. For example, consider two objects $X,Y\in\ca{C}$. Then a diagram $\xy\xymatrix{X & X\times Y\ar[l]_{\pi_{X}}\ar[r]^{\pi_{Y}} & Y}\endxy$ is a product diagram if the usual unique factorization property is satisfied, along with the following additional condition: given any two morphisms $u,v\colon Z\to X\times Y$, the pair of inequalities $\pi_{X}u\leq\pi_{X}v$ and $\pi_{Y}u\leq\pi_{Y}v$ together imply that $u\leq v$. In other words, the pair of projections $\pi_{X},\pi_{Y}$ must be jointly \emph{order}-monomorphic, rather than just jointly monomorphic. This stems from the fact that the universal property is a natural isomorphism $\Hom(Z,X\times Y)\cong\Hom(Z,X)\times\Hom(Z,Y)$ in $\nc{Pos}$, rather than in $\nc{Set}$. A similar observation applies to colimits in $\mathcal{C}$. 

Let us record below a few basic properties of $\nc{ff}$-morphisms familiar for monomorphisms in an ordinary category.

\begin{lemma}
	Consider morphisms $f\colon X\to Y$ and $g\colon Y\to Z$ in a category $\ca{C}$. Then:
	\begin{enumerate}
		\item If $f,g$ are $\nc{ff}$-morphisms, then so is $gf$.
		\item If $gf$ is an $\nc{ff}$-morphism, then so is $f$.
		\item $\nc{ff}$-morphisms are stable under pullback.
	\end{enumerate}
\end{lemma}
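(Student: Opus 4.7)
The plan is to unwind each item directly from the definition of an $\nc{ff}$-morphism as one for which $mf \leq mg$ implies $f \leq g$. Items (1) and (2) will be immediate applications of this implication combined with the fact that composition is order-preserving in each variable. Item (3) will require, in addition, the $\nc{Pos}$-enriched universal property of the pullback -- specifically the fact that its two projections are jointly order-monomorphic, in precisely the same sense as was just observed for products in the discussion preceding the lemma.

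For (1), starting from $gfu \leq gfv$ for some pair $u,v\colon W \to X$, one applies the $\nc{ff}$-property of $g$ to the morphisms $fu$ and $fv$ to obtain $fu \leq fv$, and then applies the $\nc{ff}$-property of $f$ to the pair $u,v$. Item (2) is shorter still: if $fu \leq fv$, then post-composing on the left with $g$ yields $gfu \leq gfv$, and the $\nc{ff}$-property of $gf$ delivers $u \leq v$.

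For (3), let $m\colon X \to Z$ be an $\nc{ff}$-morphism, $h\colon Y \to Z$ an arbitrary morphism, and write $\pi_{X}\colon P \to X$ and $\pi_{Y}\colon P \to Y$ for the projections of the pullback, satisfying $m\pi_{X} = h\pi_{Y}$. I aim to show that $\pi_{Y}$ is $\nc{ff}$. Given $u,v\colon W \to P$ with $\pi_{Y}u \leq \pi_{Y}v$, composing on the left with $h$ yields $m\pi_{X}u = h\pi_{Y}u \leq h\pi_{Y}v = m\pi_{X}v$, so the $\nc{ff}$-property of $m$ gives $\pi_{X}u \leq \pi_{X}v$. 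The essential point -- and really the only step beyond formal manipulation -- is then to invoke joint order-monomorphicity of the pair $(\pi_{X},\pi_{Y})$ to conclude $u \leq v$. This is where the $\nc{Pos}$-enrichment is genuinely used: the ordinary universal property of the pullback would only deliver joint monomorphicity (the equality-version), whereas here the pullback is a \emph{conical} weighted limit, so its universal property is a natural isomorphism of hom-posets and automatically encodes the stronger jointly-order-monomorphic statement. No other substantive obstacle arises.
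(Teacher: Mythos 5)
Your proposal is correct and follows essentially the same route as the paper: items (1) and (2) are the immediate definitional manipulations, and for (3) you transport the inequality across the commuting square, cancel the $\nc{ff}$-morphism, and conclude via the joint order-monomorphicity of the pullback projections, exactly as in the paper's proof. Your remark that this last step is where the enriched (conical) universal property is genuinely used is a fair and accurate gloss on the paper's phrase ``the limit property of the pullback.''
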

\begin{proof}
	Perhaps only item (3) needs some details, so consider the following pullback square in $\ca{C}$ where $f$ is an $\nc{ff}$-morphism and assume that $u,v\colon A\to P$ are such that $qu\leq qv$.
	\begin{center}
		\hfil
		\xy\xymatrix{P\ar[r]^{p}\ar[d]_{q} & X\ar[d]^{f} \\
			Y\ar[r]_{g} & Z}\endxy
		\hfil 
	\end{center}
	We then have $gqu\leq gqv\implies fpu\leq fpv\implies pu\leq pv$, since $f$ is an $\nc{ff}$-morphism. Then, because we have both $pu\leq pv$ and $qu\leq qv$, we conclude by the limit property of the pullback that $u\leq v$.
\end{proof}

We recall next two particular examples of weighted limits which are not conical and which play an important role in the context of $\nc{Pos}$-categories. See also \cite{Kurz-Velebil}.

The \emph{comma object} of an ordered pair of morphisms $(f,g)$ with common codomain is a square
\begin{displaymath}
	\begin{tikzcd}[sep=0.35in]		C\ar[r,"c_{1}"]\ar[d,"c_{0}"']\ar[dr,phantom,"\leq"] & Y\ar[d,"g"] \\
	X\ar[r,"f"'] & Z
	\end{tikzcd}	
\end{displaymath}
such that $fc_{0}\leq gc_{1}$ and which is universal with this property, the latter meaning precisely the following two properties:
\begin{enumerate}
	\item Given $u_{0}\colon W\to X$ and $u_{1}\colon W\to Y$ in $\ca{C}$ such that $fu_{0}\leq gu_{1}$, there exists a $u\colon W\to C\in\ca{C}$ such that $c_{0}u=u_{0}$ and $c_{1}u=u_{1}$.
	\item The pair $(c_{0},c_{1})$ is jointly order-monomorphic.
\end{enumerate}
Note in particular that the factorization given in (1) will be unique by (2). We will usually denote the comma object $C$ by $f/g$. In $\nc{Pos}$, the comma object is given by $f/g=\{(x,y)\in X\times Y| f(x)\leq g(y)\}$ with the order induced from the product.

The \emph{inserter} of an ordered pair $(f,g)$ of parallel morphisms $\begin{tikzcd}X\ar[r,shift left=1ex,"f"]\ar[r,shift right=1ex, "g"']& Y\end{tikzcd}$ is a morphism $e\colon E\to X\in\ca{C}$ such that $fe\leq ge$ and universal in the following sense:
\begin{enumerate}
	\item If $h\colon Z\to X\in\ca{C}$ is such that $fh\leq gh$, then there exists a $u\colon Z\to A$ such that $eu=h$.
	\item $e$ is an $\nc{ff}$-morphism.
\end{enumerate}
Again, note that the factorization posited in (1) is unique by property (2). In $\nc{Pos}$, the inserter is precisely $E=\{x\in X| f(x)\leq g(x)\}$ with the order induced from $X$.

It will be convenient for us to have an alternative way of constructing all finite weighted limits using inserters along with some conical limits. This is then the content of the following proposition, which should be well-known and in any case follows from more general facts about 2-categorical limits. We nevertheless include a proof for the sake of completeness.

\begin{proposition}\label{finite limits=finite products+inserters}
	A category $\ca{C}$ is finitely complete if and only if it has finite products and inserters.
\end{proposition}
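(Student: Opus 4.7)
My plan is to handle the two directions separately, with the forward direction being essentially immediate and the content lying in the converse.

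For the forward direction, finite completeness trivially gives finite products (as finite conical limits) and inserters (which are a particular finite weighted limit, since the weight in question is a finitely presentable $\nc{Pos}$-functor).

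For the converse, I would invoke the characterization recalled just before the statement: since the base $\nc{Pos}$ is locally finitely presentable as a monoidal category, $\ca{C}$ is finitely complete iff it has finite products, equalizers, and finite powers. Given finite products and inserters, I therefore need to construct (a) equalizers and (b) powers $X^P$ for $P$ a finite poset.

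For equalizers, given a parallel pair $f,g\colon X\to Y$, I would first take the inserter $e_{1}\colon E_{1}\to X$ of $(f,g)$, so that $fe_{1}\leq ge_{1}$, and then take the inserter $e_{2}\colon E_{2}\to E_{1}$ of the reversed pair $(ge_{1},fe_{1})$, giving $ge_{1}e_{2}\leq fe_{1}e_{2}$. Combined with the previous inequality, this forces $fe_{1}e_{2}=ge_{1}e_{2}$. Universality is then a standard two-step factorization using the universal properties of the two inserters in turn, while uniqueness follows from the fact that the composite $e_{1}e_{2}$ is an $\nc{ff}$-morphism (hence in particular a monomorphism) by the preceding lemma.

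For finite powers, let $P$ be a finite poset on underlying set $\{1,\dots,n\}$. I would build $X^{P}$ as a subobject of the finite product $X^{n}$, namely the one carved out by the relations $\pi_{i}\leq\pi_{j}$ for each pair $i\leq j$ in $P$. Concretely, enumerate these finitely many pairs $(i_{1},j_{1}),\dots,(i_{k},j_{k})$ and iteratively form inserters: let $m_{1}\colon E_{1}\to X^{n}$ be the inserter of $(\pi_{i_{1}},\pi_{j_{1}})$, then let $m_{2}\colon E_{2}\to E_{1}$ be the inserter of $(\pi_{i_{2}}m_{1},\pi_{j_{2}}m_{1})$, and so on; the composite $m:=m_{1}\cdots m_{k}\colon E_{k}\to X^{n}$ is an $\nc{ff}$-morphism. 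To verify that $E_{k}$ has the universal property of $X^{P}$, I would chase the hom-sets: for any $C\in\ca{C}$, the map $\ca{C}(C,E_{k})\to\ca{C}(C,X^{n})\cong\ca{C}(C,X)^{n}$ induced by $m$ is an order-embedding whose image consists precisely of those tuples $(f_{1},\dots,f_{n})$ satisfying $f_{i}\leq f_{j}$ whenever $i\leq j$ in $P$. This is exactly the set $\mathrm{Hom}_{\nc{Pos}}(P,\ca{C}(C,X))$, and naturality in $C$ is automatic.

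The only slightly delicate point, and hence the main obstacle, is checking naturality and that one really captures all the order conditions of $P$ rather than only covering relations; but since $P$ is finite one can simply include a pair $(i,j)$ in the enumeration for every $i\leq j$ in $P$, which makes the characterization of the image transparent. Everything else reduces to routine applications of the universal properties of products and inserters.
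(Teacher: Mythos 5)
Your proposal is correct and follows essentially the same route as the paper: reduce finite completeness to finite products, equalizers and finite powers, obtain the equalizer of $(f,g)$ as a double inserter (first of $(f,g)$, then of the reversed pair), and carve the power $X^{P}$ out of the ordinary power $X^{n}$ as the order-subobject determined by the inequalities $\pi_{i}\leq\pi_{j}$ for $i\leq j$ in $P$. The only (harmless) divergence is in how that subobject is realized --- you iterate inserters one relation at a time, whereas the paper forms the individual inserters $E_{ab}$ and combines them in a single step --- and your iterated version makes the identification of $\ca{C}(C,X^{P})$ with $\mathrm{Hom}_{\nc{Pos}}(P,\ca{C}(C,X))$ particularly transparent.
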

\begin{proof}
	Suppose that $\ca{C}$ has finite products and inserters. To have all finite conical limits it suffices to construct equalizers. So consider any parallel pair of morphisms $\xy\xymatrix{X\ar@<1ex>[r]^{f}\ar@<-1ex>[r]_{g} & Y}\endxy$ in $\ca{C}$. Let $m\colon M\to X$ be the inserter of the pair $(f,g)$ and then let $n\colon E\to M$ be the inserter of $(gm,fm)$. We claim that now $e\coloneqq mn\colon E\to X$ is the desired equalizer. Indeed, note first that $gmn\leq fmn$ and also $fm\leq gm\implies fmn\leq gmn$, so that $fmn=gmn$. Then suppose that $h\colon Z\to X$ is such that $fh=gh$. Since in particular $fh\leq gh$, there exists a unique $u\colon Z\to M$ such that $mu=h$. Now $u$ is such that $gmu=gh\leq fh=fmu$, so we have a unique $v\colon Z\to E$ such that $nv=u$. So $ev=mnv=mu=h$. Finally, it is clear that $e$ is order-monomorphic, since both $m,n$ are so.
	
	Second, we need to construct finite powers, so let $P$ be a finite poset and consider any $X\in\ca{C}$. Consider the product $\prod\limits_{a\in P}X$ (i.e. the ordinary power) and for every pair of elements $a,b\in P$ with $a\leq b$ form the inserter of $(\pi_{a},\pi_{b})$, say $\xy\xymatrix{E_{ab}\ar@{>->}[r]^{e_{ab}} & \prod\limits_{a\in P}X\ar@<1ex>[r]^{\pi_{a}}\ar@<-1ex>[r]_{\pi_{b}} & X}\endxy$ in $\ca{C}$. Set $E\coloneqq\prod\limits_{a\leq b}E_{ab}$. We claim the $E$ is the power $X^{P}$.
	
	To show this, consider any family of morphisms $(f_{a}\colon C\to X)_{a\in P}$ such that $a\leq b\implies f_{a}\leq f_{b}$, i.e. a homomorphism $P\to\ca{C}(C,X)$ in $\nc{Pos}$. There is then a unique $f\colon C\to\prod\limits_{a\in P}X$ such that $\pi_{a}f=f_{a}$ for all $a\in P$. Now for each pair of elements $a,b\in P$ with $a\leq b$ we have $f_{a}\leq f_{b}$, which is to say $\pi_{a}f\leq\pi_{b}f$. Hence, there is a unique $u_{ab}\colon C\to E_{ab}$ with $e_{ab}u_{ab}=f$. This in turn induces a unique $u\colon C\to E$ such that $\pi_{ab}u=u_{ab}$ whenever $a,b\in P$ with $a\leq b$.
	
	Finally, let's show that this assignment is order-preserving and order reflecting. So consider another family $(g_{a}\colon C\to X)_{a\in P}$, with $g\colon C\to\prod\limits_{a\in P}X$ and $v\colon C\to E$ corresponding to $f$ and $u$ as defined above for $(f_{a})_{a\in P}$. If $(f_{a})_{a\in P}\leq(g_{a})_{a\in P}$, then $f_{a}\leq g_{a}$ for all $a\in P$, i.e. $\pi_{a}f\leq \pi_{a}g$ for all $a\in P$ and hence $f\leq g$ by the universal property of the product. This in turn means that whenever $a\leq b$ we have $e_{ab}u_{ab}=f\leq g=e_{ab}v_{ab}$ and so $u_{ab}\leq v_{ab}$, hence $u\leq v$. It is clear that these implications can also be reversed.
\end{proof}
\vspace{3mm}

If a category $\ca{C}$ has comma objects, then in particular for any morphism $f\colon X\to Y\in\ca{C}$ we can form the comma of the pair $(f,f)$. This comma measures the extent to which $f$ fails to be an $\nc{ff}$-morphism and ultimately connects with the notions of regularity and exactness to be introduced shortly.

\begin{definition}
	Given any morphism $f\colon X\to Y$ in a category $\ca{C}$, the comma object $f/f$ is called the \emph{kernel congruence} of $f$.
\end{definition}

\begin{lemma}
	For any morphism $f\colon X\to Y$ with kernel congruence $\xy\xymatrix{f/f\ar@<1ex>[r]^{f_{0}}\ar@<-1ex>[r]_{f_{1}} & X}\endxy$ in a category $\mathcal{C}$, the following are equivalent:
	\begin{enumerate}
		\item $f$ is an $\nc{ff}$-morphism.
		\item $f_{0}\leq f_{1}$.
		\item The canonical morphism $\iota_{f}\colon 1_{X}/1_{X}\to f/f$ is an isomorphism.
	\end{enumerate}
\end{lemma}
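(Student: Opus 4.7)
The plan is to verify $(1)\Leftrightarrow(2)$ by a direct appeal to the universal property of the comma $f/f$, and then verify $(2)\Leftrightarrow(3)$ by comparing the universal properties of $f/f$ and $1_X/1_X$.

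For $(1)\Rightarrow(2)$, the defining inequality of the kernel congruence gives $ff_0\leq ff_1$, so that if $f$ is an $\nc{ff}$-morphism we at once obtain $f_0\leq f_1$. Conversely, for $(2)\Rightarrow(1)$, suppose $f_0\leq f_1$ and let $u,v\colon Z\to X$ satisfy $fu\leq fv$. By the universal property of the comma there exists a (unique) $w\colon Z\to f/f$ with $f_0w=u$ and $f_1w=v$, whence $u=f_0w\leq f_1w=v$ by monotonicity of composition. Hence $f$ is an $\nc{ff}$-morphism.

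For $(2)\Leftrightarrow(3)$, write $p_0,p_1\colon 1_X/1_X\to X$ for the canonical projections, so that $p_0\leq p_1$ by construction. The morphism $\iota_f$ is then the unique arrow satisfying $f_i\iota_f=p_i$ for $i=0,1$, obtained by applying the universal property of $f/f$ to the pair $(p_0,p_1)$ (which satisfies $fp_0\leq fp_1$ since composition is monotone). If $\iota_f$ is invertible, then $f_i=p_i\iota_f^{-1}$, so $p_0\leq p_1$ immediately yields $f_0\leq f_1$. Conversely, assuming $f_0\leq f_1$, the universal property of $1_X/1_X$ applied to the pair $(f_0,f_1)\colon f/f\to X$ produces a morphism $j\colon f/f\to 1_X/1_X$ with $p_ij=f_i$. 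One then checks that the composites $\iota_fj$ and $j\iota_f$ satisfy the same defining equations as $1_{f/f}$ and $1_{1_X/1_X}$ respectively, and invokes the joint order-monomorphy of $(f_0,f_1)$ and of $(p_0,p_1)$ to conclude that they are in fact the identities.

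The argument is essentially formal, being just an unpacking of the universal property of the comma object. The only step that requires brief care is the final uniqueness argument in $(2)\Rightarrow(3)$, which is handled by invoking clause (2) of the comma-object definition for both $f/f$ and $1_X/1_X$.
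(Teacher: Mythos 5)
Your proof is correct and follows essentially the same route as the paper's: the implication $(1)\Rightarrow(2)$ and the construction of the inverse to $\iota_f$ in $(2)\Rightarrow(3)$ are identical to the paper's arguments, and the only difference is organizational — you close the equivalence via $(2)\Rightarrow(1)$ and $(3)\Rightarrow(2)$ where the paper instead proves $(3)\Rightarrow(1)$ directly, both being equally routine unpackings of the comma-object universal property.
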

\begin{proof}
	If $f$ is an $\nc{ff}$-morphism, then $ff_{0}\leq ff_{1}\implies f_{0}\leq f_{1}$.
	
	If $f_{0}\leq f_{1}$, then $1_{X}f_{0}\leq 1_{X}f_{1}$ implies that $(f_{0},f_{1})$ must factor through $1_{X}/1_{X}$ via a morphism which is then easily seen to be inverse to $1_{X}/1_{X}\rightarrow f/f$.
	
	Finally, assume that $f/f\cong 1_{X}/1_{X}$ and let $u_{0},u_{1}\colon Z\to X$ be such that $fu_{0}\leq fu_{1}$. Then $(u_{0},u_{1})$ factors through $f/f$ and so through $1_{X}/1_{X}$. But the latter means precisely that $u_{0}\leq u_{1}$.
\end{proof}

As we have mentioned already, the class of $\nc{ff}$-morphisms will be the ``mono part'' of a factorization system for regular categories. The other class of morphisms is taken to be the class of morphisms orthogonal to all $\nc{ff}$-morphisms, as has to be the case in any orthogonal factorization system. Let us first recall the definition of orthogonality in this enriched context.

\begin{definition}
	Given morphisms $e\colon A\to B$ and $m\colon X\to Y$ in a category $\ca{C}$, we say that $e$ is \emph{left orthogonal} to $m$ and write $e\perp m$ if the square
	\begin{center}
		\hfil
		\xy\xymatrix{\ca{C}(B,X)\ar[r]^{-\circ e}\ar[d]_{m\circ -} & \ca{C}(A,X)\ar[d]^{m\circ -} \\
			\ca{C}(B,Y)\ar[r]_{-\circ e} & \ca{C}(A,Y)}\endxy
		\hfil 
	\end{center}
	is a pullback in $\nc{Pos}$.
\end{definition}

To make things more explicit, the statement $e\perp m$ means two things:
\begin{enumerate}
	\item The usual diagonal fill-in property.
	\item Given two commutative squares
	$	\xy\xymatrix@=3em{A\ar[r]^{e}\ar[d]_{u_{1}} & B\ar[d]^{v_{1}}\ar@{-->}[dl]_{d_{1}} \\
		X\ar[r]_{m} & Y}\endxy
	\xy\xymatrix@=3em{A\ar[r]^{e}\ar[d]_{u_{2}} & B\ar[d]^{v_{2}}\ar@{-->}[dl]_{d_{2}} \\
		X\ar[r]_{m} & Y}\endxy$
	
	in $\ca{C}$ with $u_{1}\leq u_{2}$ and $v_{1}\leq v_{2}$, the diagonal fill-ins must also satisfy $d_{1}\leq d_{2}$.
\end{enumerate}	

Then, as in \cite{Kurz-Velebil} we introduce the following class of morphisms, which in some sense are the $\nc{Pos}$-enriched analogue of strong epimorphisms for ordinary categories.

\begin{definition}
	A morphism $e\colon A\to B$ is called an \emph{$\nc{so}$-morphism} (or \emph{surjective on objects}) if $e\perp m$ for every $\nc{ff}$-morphism $m\colon X\to Y\in\ca{C}$.
\end{definition}

\begin{remark}
	We note here that, in the special case of checking a condition $e\perp m$ with $m$ an $\nc{ff}$-morphism, the 2-dimensional part of the definition of orthogonality (property 2 above) follows for free. Indeed, in the notation of the above diagrams, the fact that $md_{1}=v_{1}\leq v_{2}=md_{2}$ then implies $d_{1}\leq d_{2}$.
\end{remark}
\vspace{3mm}

We record here two basic properties of $\nc{so}$-morphisms which will be used throughout the paper. These follow simply because the class of $\nc{so}$-morphisms is defined by a left orthogonality condition.

\begin{lemma}\label{properties of so-morphisms}
	Consider morphisms $f\colon X\to Y$ and $g\colon Y\to Z$ in a category $\ca{C}$. Then:
	\begin{enumerate}
		\item If $f,g$ are $\nc{so}$-morphisms, then so is $gf$.
		\item If $gf$ is an $\nc{so}$-morphism, then so is $g$.
	\end{enumerate}
\end{lemma}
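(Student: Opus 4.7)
The plan is to verify both items by the standard diagram chases for orthogonality classes, taking advantage of a simplification specific to our setting: since we are always testing orthogonality against an $\nc{ff}$-morphism $m$, the 2-dimensional clause in the definition of $e\perp m$ is automatic by the preceding Remark, so it suffices in each case to produce the ordinary diagonal fill-in.

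For item (1), I would fix any $\nc{ff}$-morphism $m\colon A\to B$ and any commutative square with $gf$ on the left, top arrow $u\colon X\to A$ and bottom arrow $v\colon Z\to B$. The idea is to build the diagonal in two stages. First, view the same square as a lifting problem for $f$ against $m$ by rewriting the left edge as $f$ followed by $g$, so that the bottom edge becomes $vg\colon Y\to B$; applying $f\perp m$ yields $d_{1}\colon Y\to A$ with $d_{1}f=u$ and $md_{1}=vg$. Second, the latter equation makes $(d_{1},v)$ into a commutative square for a lifting problem against $m$ with $g$ on the left, to which $g\perp m$ applies and produces $d\colon Z\to A$ satisfying $dg=d_{1}$ and $md=v$. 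Combining, $d(gf)=d_{1}f=u$ and $md=v$, as required.

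For item (2), I would test $g\perp m$ for any $\nc{ff}$-morphism $m$. Given a commutative square with $g$ on the left, top $u\colon Y\to A$ and bottom $v\colon Z\to B$, the key move is to precompose the top and left edges by $f$: the resulting square with $gf$ on the left and top $uf$ commutes because $m(uf)=(vg)f=v(gf)$. Applying the hypothesis $gf\perp m$ produces $d\colon Z\to A$ with $d(gf)=uf$ and $md=v$; the second identity is already the bottom triangle for the original square. For the top triangle, $m(dg)=vg=mu$ together with the fact that $\nc{ff}$-morphisms are in particular monomorphisms forces $dg=u$.

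The only point that might cause trouble is the 2-dimensional component of orthogonality, but this is handled uniformly by the Remark. Beyond that, the argument is pure diagram chasing with no substantive obstacle.
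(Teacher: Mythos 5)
Your proof is correct and is precisely the argument the paper intends: the lemma is stated without proof, accompanied only by the remark that these are generic properties of a class defined by left orthogonality, and your two diagram chases (together with the observation that the 2-dimensional clause of orthogonality is automatic when testing against an $\nc{ff}$-morphism) spell out exactly that standard argument. No gaps.
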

\vspace{3mm}

The $\nc{so}$-morphisms will be the ``epimorphism part'' of the factorization system on regular categories. Indeed, given the existence of some limits, every $\nc{so}$-morphism is an order-epimorphism.

\begin{lemma}
	If $\ca{C}$ has inserters, then every $\nc{so}$-morphism in $\ca{C}$ is an \\ order-epimorphism.
\end{lemma}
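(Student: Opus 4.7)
The plan is to show that for an $\nc{so}$-morphism $e\colon A\to B$ and any parallel pair $f,g\colon B\to Z$ with $fe\leq ge$, we can conclude $f\leq g$, which is exactly the order-epimorphism condition.

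First I would form the inserter of $(f,g)$, using the hypothesis that $\ca{C}$ has inserters. Call it $m\colon M\to B$, so that $fm\leq gm$, the morphism $m$ is (by definition) an $\nc{ff}$-morphism, and $m$ satisfies the universal factorization property. Since $fe\leq ge$ by assumption, the universal property yields a (unique) morphism $u\colon A\to M$ with $mu=e$.

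Next I would invoke the orthogonality $e\perp m$, which is available because $e$ is an $\nc{so}$-morphism and $m$ is an $\nc{ff}$-morphism. The square
\begin{displaymath}
\begin{tikzcd}[sep=0.35in]
A\ar[r,"e"]\ar[d,"u"'] & B\ar[d,"1_{B}"] \\
M\ar[r,"m"'] & B
\end{tikzcd}
\end{displaymath}
commutes because $mu=e=1_{B}\cdot e$. Orthogonality then produces a diagonal $d\colon B\to M$ with $md=1_{B}$ (and $de=u$, though we will not need the latter).

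Finally, composing the inequality $fm\leq gm$ on the right with $d$ gives $f=f\cdot md\leq g\cdot md=g$, as required. There is no real obstacle here: the only place where the hypothesis on $\ca{C}$ intervenes is in the existence of the inserter used to convert the inequality $fe\leq ge$ into a concrete $\nc{ff}$-morphism against which we can test the orthogonality property of $e$. The two-dimensional part of orthogonality plays no role; the one-dimensional diagonal fill-in alone suffices.
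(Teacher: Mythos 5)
Your proof is correct and follows essentially the same route as the paper: form the inserter $m$ of $(f,g)$, factor $e$ through it, and use $e\perp m$ to split $m$. The only cosmetic difference is that the paper notes $m$ is then an isomorphism before concluding, whereas you compose directly with the section $d$; both yield $f\leq g$ immediately.
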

\begin{proof}
	Let $e\colon A\to B$ be an $\nc{so}$-morphism and consider $f,g\colon B\to C$ such that $fe\leq ge$. Let $m\colon M\to B$ be the inserter of $(f,g)$. Then there exists a unique $u\colon A\to M$ such that $mu=e$. But we have $e\perp m$ and so we obtain a $v\colon B\to M$ such that $ve=u$, $mv=1_{B}$. Now $m$ is both a split epimorphism and a monomorphism, hence an isomorphism and so $fm\leq gm\implies f\leq g$.
\end{proof}

The notion of regularity for ordinary categories, as is well known, can be defined either in terms of strong epimorphisms or in terms of regular epimorphisms. In fact, one can argue that a significant part of the power of regularity is that it forces these two classes of epimorphisms to coincide. From another perspective, the regular epimorphisms are the categorical notion of quotient that is usually appropriate in ordinary categories. For $\nc{Pos}$-categories the corresponding notion is that of \emph{coinserter}, which is dual to the notion of inserter defined earlier. Intuitively, whereas taking a coequalizer corresponds to adding new equalities, constructing a coinserter should be thought of as adding new \emph{inequalities}.

Now, just as every regular epimorphism is always strong, so too we have the following.

\begin{lemma}
	Every coinserter is an $\nc{so}$-morphism.
\end{lemma}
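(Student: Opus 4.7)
The plan is to dualize, roughly, the argument given earlier in the excerpt showing that every $\nc{so}$-morphism is an order-epimorphism: there, an inserter was used to detect a failure of $\nc{so}$-ness via the inequality $fe\leq ge$; here, the inequality $qf\leq qg$ attached to a coinserter should be exactly what powers the diagonal fill-in against any $\nc{ff}$-morphism.

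More concretely, let $q\colon Y\to Q$ be the coinserter of a parallel pair $f,g\colon X\to Y$, so that $qf\leq qg$, $q$ is an order-epimorphism, and morphisms out of $Q$ correspond to morphisms $h\colon Y\to Z$ satisfying $hf\leq hg$. To show $q\perp m$ for an arbitrary $\nc{ff}$-morphism $m\colon A\to B$, I start with a commutative square $mu=vq$ where $u\colon Y\to A$ and $v\colon Q\to B$. The first step is to transport the defining inequality across the square: from $qf\leq qg$ we get $vqf\leq vqg$, hence $muf\leq mug$, and then the $\nc{ff}$-property of $m$ gives $uf\leq ug$. The universal property of the coinserter then produces a unique $d\colon Q\to A$ with $dq=u$. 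The second step is to check that $d$ is also a fill-in on the lower triangle: from $mdq=mu=vq$ and the fact that $q$ is an order-epimorphism (being a coinserter), we conclude $md=v$.

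Finally, the 2-dimensional condition in the definition of $e\perp m$ comes for free by the Remark following the definition of $\nc{so}$-morphism, since the right leg $m$ is an $\nc{ff}$-morphism: any two fill-ins $d_{1},d_{2}$ that slot into the lower triangles of two comparable squares satisfy $md_{1}\leq md_{2}$ and hence $d_{1}\leq d_{2}$.

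I do not expect any real obstacle here: the whole argument is a straightforward transposition of the universal property of coinserters through the $\nc{ff}$-morphism, and the only place one might slip is in verifying $md=v$, which requires remembering that coinserters are order-epimorphic by definition (the dual of the fact that inserters are $\nc{ff}$). Everything else is formal.
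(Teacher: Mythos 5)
Your proof is correct and is essentially identical to the paper's: transport the coinserter inequality across the square, use the $\nc{ff}$-property of $m$ to obtain $uf\leq ug$, invoke the universal property of the coinserter to get the fill-in $d$, and conclude $md=v$ from $q$ being an order-epimorphism. The paper leaves the 2-dimensional part of orthogonality implicit (it was handled once and for all in the Remark you cite), but otherwise there is no difference.
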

\begin{proof}
	Consider the following commutative diagram, where $e$ is assumed to be the coinserter of $(f_{0},f_{1})$ and $m$ is an $\nc{ff}$-morphism.
	\begin{center}
		\hfil
		\xy\xymatrix{C\ar@<1ex>[r]^{f_{0}}\ar@<-1ex>[r]_{f_{1}} & A\ar[r]^{e}\ar[d]_{u} & B\ar[d]^{v} \\
			& X\ar[r]_{m} & Y}\endxy
		\hfil 
	\end{center}
	Now we have $muf_{0}=vef_{0}\leq vef_{1}=muf_{1}$, so by virtue of $m$ being an $\nc{ff}$-morphism we get $uf_{0}\leq uf_{1}$. Then by the coinserter property there exists a unique $d\colon B\to X$ such that $de=u$. It follows also that $md=v$ because $e$ is an order-epimorphism.
\end{proof}

\begin{definition}
	A morphism $q\colon X\to Y$ is called an \emph{effective} (epi-)morphism if it is the coinserter of some pair of parallel morphisms.
\end{definition}

The notion of regularity of a category is essentially an exactness property relating kernel congruences and coinserters. There are however some exactness properties involving these notions that always hold, regardless of regularity. This is the content of the following proposition, which again should be compared to the corresponding facts involving kernel pairs and coequalizers in an ordinary category (see \cite{Handbook}).

\begin{proposition}\label{Kernel Congruences and Coinserters}
	\begin{enumerate}
		\item If an effective epimorphism has a kernel congruence, then it must be the coinserter of that kernel congruence.
		\item If a kernel congruence has a coinserter, then it is also the kernel congruence of that coinserter.
	\end{enumerate}
\end{proposition}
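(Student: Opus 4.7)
The plan is to treat each part by a direct diagram chase that mirrors the classical argument relating kernel pairs and coequalizers, being careful to track the extra $2$-dimensional data coming from the $\nc{Pos}$-enrichment.

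For (1), write the effective epimorphism $q\colon X\to Y$ as the coinserter of some pair $\xy\xymatrix{C\ar@<1ex>[r]^{f_{0}}\ar@<-1ex>[r]_{f_{1}} & X}\endxy$ and let $\xy\xymatrix{q/q\ar@<1ex>[r]^{q_{0}}\ar@<-1ex>[r]_{q_{1}} & X}\endxy$ denote its kernel congruence. Since $qf_{0}\leq qf_{1}$ by the coinserter property, the universal property of the comma $q/q$ yields a unique $\phi\colon C\to q/q$ with $q_{0}\phi=f_{0}$ and $q_{1}\phi=f_{1}$. Now verify that $q$ satisfies the coinserter property for $(q_{0},q_{1})$: the inequality $qq_{0}\leq qq_{1}$ is automatic, and for universality, given any $h\colon X\to Z$ with $hq_{0}\leq hq_{1}$, precomposition with $\phi$ gives $hf_{0}\leq hf_{1}$, so $h$ factors uniquely through $q$ via the coinserter property of $(f_{0},f_{1})$. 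The $2$-dimensional (order-isomorphism) part of the coinserter property for $(q_{0},q_{1})$ is inherited directly from the one for $(f_{0},f_{1})$, since the factorizations agree.

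For (2), suppose $(r_{0},r_{1})\colon R\to X$ is a kernel congruence, i.e.\ $R=f/f$ for some $f\colon X\to Z$, and that $(r_{0},r_{1})$ admits a coinserter $q\colon X\to Y$. It must be shown that $R$ is also the comma $q/q$. From $fr_{0}\leq fr_{1}$ and the coinserter property of $q$, extract the unique $g\colon Y\to Z$ with $gq=f$. Then for any pair $(u_{0},u_{1})\colon W\to X$ with $qu_{0}\leq qu_{1}$, applying $g$ gives $fu_{0}=gqu_{0}\leq gqu_{1}=fu_{1}$, so $(u_{0},u_{1})$ factors through $R=f/f$. The factorization is unique because $(r_{0},r_{1})$, being the projections of a comma, is jointly order-monomorphic, and this same joint order-monomorphicity is the second half of the defining property of $q/q$. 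Hence $R\cong q/q$.

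The arguments are essentially formal manipulations of universal properties and the main ``obstacle'' is simply bookkeeping: one must check both that the $1$-dimensional factorization transfers between the two coinserter/comma problems and that the order data transfer along with it. No fact about $\nc{Pos}$-enriched limits beyond the definitions of comma object and coinserter is needed.
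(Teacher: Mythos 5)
Your proof is correct and follows essentially the same route as the paper: in both parts you factor the given data through the appropriate universal property (the comma in (1), the coinserter in (2)) and then transfer the factorization problem across that comparison morphism, with the order-epimorphism/joint order-monomorphism conditions coming for free. No issues.
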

\begin{proof}
	\begin{enumerate}
		\item Suppose $\xy\xymatrix{X\ar@<1ex>[r]^{f_{0}}\ar@<-1ex>[r]_{f_{1}} & Y\ar[r]^{q} & Q}\endxy$ is a coinserter diagram and assume $q$ has a kernel congruence  $\xy\xymatrix{q/q\ar@<1ex>[r]^{q_{0}}\ar@<-1ex>[r]_{q_{1}} & Y}\endxy$ in $\ca{C}$. Then $qf_{0}\leq qf_{1}\implies (\exists! u\colon X\to q/q)q_{0}u=f_{0}, q_{1}u=f_{1}$, by the universal property of kernel congruence.
		
		Now let $g\colon Y\to Z$ be such that $gq_{0}\leq gq_{1}$. Then $gq_{0}u\leq gq_{1}u\implies gf_{0}\leq gf_{1}$ and so $(\exists! v\colon Q\to Z)vq=g$. Finally, $q$ is already an order-epimorphism by virtue of being a coinserter of some pair of morphisms.
		
		\item Suppose that $\xy\xymatrix{R\ar@<1ex>[r]^{r_{0}}\ar@<-1ex>[r]_{r_{1}} & X\ar[r]^{q} & Q}\endxy$ is a coinserter diagram and that $(r_{0},r_{1})$ is the kernel congruence of some $f\colon X\to Y$. Then $fr_{0}\leq fr_{1}$ implies the existence of a unique $u\colon Q\to Y$ such that $uq=f$.
		
		Now let $a,b\colon A\to X$ be such that $qa\leq qb$. Then also $uqa\leq uqb$, i.e. $fa\leq fb$ and so $(\exists!v\colon A\to R)r_{0}v=a, r_{1}v=b$.
	\end{enumerate}
\end{proof}

We now come to the definition of regularity for $\nc{Pos}$-enriched categories, as presented by Kurz and Velebil in \cite{Kurz-Velebil}. We label it as `provisional' in the context of this paper for reasons that will be justified shortly.

\begin{definition}(provisional)\label{Kurz-Velebil regularity}
	A category $\ca{C}$ is \emph{regular} if it satisfies the following:\newline
	(R1) $\ca{C}$ has all finite (weighted) limits.\newline
	(R2) $\ca{C}$ has ($\nc{so}$,$\nc{ff}$)-factorizations.\newline
	(R3) $\nc{so}$-morphisms are stable under pullback in $\ca{C}$.\newline
	(R4) Every $\nc{so}$-morphism is effective in $\ca{C}$.
\end{definition}

A main feature of this definition is that it posits the existence of a stable ($\nc{so}$,$\nc{ff}$) factorization system in $\ca{C}$. The authors go on to state that the `gist of the definition' is property (R4), i.e. the assumption that $\nc{so}$-morphisms and effective morphisms coincide, which essentially states that it is equivalent to require the stable factorization system to be (effective,$\nc{ff}$). However, we shall show that condition (R4) in fact follows from the first three conditions, much like in the case of ordinary regularity one can state the definition equivalently either in terms of regular epimorphisms or strong epimorphisms. In fact, the proof is essentially a direct adaptation of the corresponding one in the ordinary context (see \cite{Handbook}).

Before making good on our claim, we need a preparatory result on the pasting of a pullback square with a comma square. This is well-known from the realm of 2-categories, but we include its proof for the sake of making this paper more self-contained.


\begin{lemma}\label{pullback-comma lemma}
	Consider the following diagram in a category $\ca{C}$, where the right-hand square is a comma square and the left-hand square commutes.
	\begin{center}
		\hfil
		\xy\xymatrix{P\ar[r]^{p_{1}}\ar[d]_{p_{0}} & Q\ar[r]^{q_{1}}\ar[d]_{q_{0}}\ar@{}[dr]|{\leq} & Z\ar[d]^{g} \\
			X'\ar[r]_{x} & X\ar[r]_{f} & Y}\endxy
		\hfil 
	\end{center}
	Then the outer rectangle is a comma square if and only if the left-hand square is a pullback.
\end{lemma}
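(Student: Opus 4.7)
The plan is to prove the two implications separately, in each case using only the defining universal properties of the comma square (1-cell inequality, 1-dimensional factorization, joint order-monomorphism of the projections) and of the pullback (the analogous data with equality in place of inequality, which still includes joint order-monomorphism of the projections since pullbacks are conical weighted limits).

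For the direction where the left square is assumed to be a pullback, I would first check the 1-cell inequality $fxp_0\leq gq_1p_1$ by rewriting $fxp_0=fq_0p_1$ via commutativity of the left square and postcomposing $fq_0\leq gq_1$ with $p_1$. For the 1-dimensional universal property, given $u_0\colon W\to X'$ and $u_1\colon W\to Z$ with $fxu_0\leq gu_1$, the right comma produces $v\colon W\to Q$ such that $q_0v=xu_0$ and $q_1v=u_1$; since $xu_0=q_0v$, the pair $(u_0,v)$ is a cone over the left pullback, yielding a unique $u\colon W\to P$ with $p_0u=u_0$ and $p_1u=v$, which then satisfies $q_1p_1u=u_1$. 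For joint order-monomorphism of $(p_0,q_1p_1)$, from $p_0u\leq p_0u'$ I obtain $q_0p_1u=xp_0u\leq xp_0u'=q_0p_1u'$; combined with the hypothesis $q_1p_1u\leq q_1p_1u'$, joint order-monomorphism of $(q_0,q_1)$ forces $p_1u\leq p_1u'$, after which joint order-monomorphism of the pullback projections concludes $u\leq u'$.

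For the converse, assume the outer rectangle is a comma. Given $a\colon W\to X'$ and $b\colon W\to Q$ with $xa=q_0b$, the inequality $fxa=fq_0b\leq gq_1b$ (from the right comma) lets me apply the outer comma's universal property to obtain $u\colon W\to P$ with $p_0u=a$ and $q_1p_1u=q_1b$. Commutativity of the left square then yields $q_0p_1u=xp_0u=xa=q_0b$, and joint order-monomorphism of $(q_0,q_1)$ upgrades this to $p_1u=b$. Uniqueness of $u$ and joint order-monomorphism of $(p_0,p_1)$ both reduce to the corresponding property of $(p_0,q_1p_1)$ in the outer comma: any comparison of $p_1$'s can be postcomposed with $q_1$, while the $p_0$-comparison is given directly.

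The principal subtlety is in this converse direction: the outer comma's universal property only constrains the \emph{composite} $q_1p_1$, so to recover $p_1u=b$ from $q_1p_1u=q_1b$ I have to extract the missing equation $q_0p_1u=q_0b$ from commutativity of the left square and then invoke joint order-monomorphism of $(q_0,q_1)$. Everything else is essentially bookkeeping with the two universal properties.
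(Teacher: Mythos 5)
Your proposal is correct and follows essentially the same route as the paper's own proof: the same verification of the $1$-cell inequality and factorization in the forward direction, the same use of joint order-monomorphism of $(q_0,q_1)$ to upgrade $q_1p_1u=q_1b$ to $p_1u=b$ in the converse, and the same reduction of the remaining uniqueness/order-reflection claims to the outer comma's universal property. No gaps.
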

\begin{proof}
	Assume first that the left-hand square is a pullback. Note first that by our assumptions on the diagram we have $fxp_{0}=fq_{0}p_{1}\leq gq_{1}p_{1}$. Now suppose that $u\colon A\to X'$ and $v\colon A\to Z$ are such that $fxu\leq gv$. Since the right-hand square is a comma, $(\exists! w\colon A\to Q)q_{0}w=xu, q_{1}w=v$. The first of these equalities by virtue of the pullback property gives that $(\exists! z\colon A\to P)p_{0}z=u, p_{1}z=w$. Then we have $q_{1}p_{1}z=q_{1}w=v$ as well.
	
	Finally, assume that $z,z'\colon A\to P$ are such that $p_{0}z\leq p_{0}z'$ and $q_{1}p_{1}z\leq q_{1}p_{1}z'$. Then we also have $xp_{0}z\leq xp_{0}z'\implies q_{0}p_{1}z\leq q_{0}p_{1}z'$, so that by the universal property of the comma square we get $p_{1}z\leq p_{1}z'$. The latter inequality together with $p_{0}z\leq p_{0}z'$ yield $z\leq z'$ by the universal property of the pullback.
	\vspace{3mm}
	
	Conversely, assume that the outer square is a comma and let $u\colon A\to X'$ and $v\colon A\to Q$ be such that $xu=q_{0}v$. Then we have $fxu=fq_{0}v\leq gq_{1}v$, so the outer rectangle being a comma says that $(\exists! w\colon A\to P)p_{0}w=u, q_{1}p_{1}w=q_{1}v$. But since also $q_{0}p_{1}w=xp_{0}w=xu=q_{0}v$ and $q_{0},q_{1}$ are jointly monomorphic, we obtain also that $p_{1}w=v$. Finally, it is clear that $p_{0},p_{1}$ are jointly order-monomorphic because $p_{0},q_{1}p_{1}$ are so.
\end{proof}

Now we can prove that condition (R4) in the definition of regularity is superfluous. The proof that follows is almost identical to that of Proposition 2.2.2 in \cite{Handbook}, concerning ordinary regularity, where we replace some uses of the familiar lemma on pasting of pullback squares with \ref{pullback-comma lemma}.

\begin{proposition}\label{simplifying regularity}
	If $\ca{C}$ is a category satisfying conditions (R1),(R2) and (R3) of \ref{Kurz-Velebil regularity}, then it also satisfies (R4).
\end{proposition}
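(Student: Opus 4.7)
The plan is to adapt the classical proof for ordinary regularity (for example, Borceux, \emph{Handbook of Categorical Algebra 2}, Proposition 2.2.2) to the $\nc{Pos}$-enriched setting, replacing kernel pairs by kernel congruences, coequalizers by coinserters, strong epimorphisms by $\nc{so}$-morphisms, and monomorphisms by $\nc{ff}$-morphisms. Concretely, let $f\colon X\to Y$ be an $\nc{so}$-morphism and form its kernel congruence $(k_0,k_1)\colon f/f\to X$ using (R1), so that $fk_0\leq fk_1$. Given $g\colon X\to Z$ with $gk_0\leq gk_1$, use (R2) to factor $\langle f,g\rangle\colon X\to Y\times Z$ as $X\xrightarrow{p}I\xrightarrow{\langle a,b\rangle}Y\times Z$ with $p$ an $\nc{so}$-morphism and $\langle a,b\rangle$ an $\nc{ff}$-morphism, so that $ap=f$ and $bp=g$. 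Since $f=ap$ is $\nc{so}$, \ref{properties of so-morphisms} shows that $a$ is $\nc{so}$. If I can further show that $a$ is $\nc{ff}$, then the orthogonality $a\perp a$ applied to the square with identities on top and bottom produces a two-sided inverse for $a$, so $a$ is an isomorphism, and $u\coloneqq ba^{-1}$ satisfies $uf=g$. Uniqueness of $u$ and the $2$-dimensional part of the coinserter universal property both follow from $f$ being an order-epi (which holds because $\nc{so}$-morphisms are order-epi whenever inserters exist, as granted by (R1)). This yields that $f$ is the coinserter of its kernel congruence, i.e.\ effective.

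The crux is therefore to prove that $a$ is $\nc{ff}$. Form the kernel congruence $(q_0,q_1)\colon a/a\to I$, so $aq_0\leq aq_1$, with the goal of deducing $q_0\leq q_1$. Using (R3), pull back $p$ along $q_0$ to obtain an $\nc{so}$-morphism $p_1\colon X_1\to a/a$ and a morphism $s_1\colon X_1\to X$ with $ps_1=q_0p_1$; then pull back $p$ along $q_1p_1$ to obtain an $\nc{so}$-morphism $p_2\colon X_2\to X_1$ and $s_2\colon X_2\to X$ with $ps_2=q_1p_1p_2$. Composing $aq_0\leq aq_1$ on the right with $p_1p_2$ and substituting yields $fs_1p_2\leq fs_2$, so by the universal property of $f/f$ there is a $k\colon X_2\to f/f$ with $k_0k=s_1p_2$ and $k_1k=s_2$. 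The hypothesis $gk_0\leq gk_1$ now gives $gs_1p_2\leq gs_2$, i.e.\ $bq_0p_1p_2\leq bq_1p_1p_2$. Combining with the inequality for $a$, this amounts to $\langle a,b\rangle q_0p_1p_2\leq\langle a,b\rangle q_1p_1p_2$, and since $\langle a,b\rangle$ is an $\nc{ff}$-morphism we deduce $q_0p_1p_2\leq q_1p_1p_2$. Finally, $p_1p_2$ is a composite of $\nc{so}$-morphisms, hence itself $\nc{so}$ and in particular an order-epi, so cancelling on the right yields $q_0\leq q_1$, as required.

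The main obstacle is the two-step pullback bookkeeping in the second paragraph: one must track that each substitution produces an honest $\leq$ (rather than a reversed or weakened relation) and that the composite $p_1p_2$ is indeed order-epi so the final cancellation is legitimate. Beyond that point the translation from the ordinary argument is essentially mechanical, with \ref{pullback-comma lemma} standing in as the enriched replacement for the familiar pullback-pasting lemma at any point where pullbacks need to be chained along a comma square.
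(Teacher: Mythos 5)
Your proof is correct and follows essentially the same strategy as the paper's: both adapt Borceux's Proposition 2.2.2 by showing that $a=\pi_{Y}i$ is an $\nc{so}$-morphism by cancellation and then an $\nc{ff}$-morphism, the latter by precomposing its kernel congruence with a pullback-induced $\nc{so}$-morphism, transferring the hypotheses $fk_{0}\leq fk_{1}$ and $gk_{0}\leq gk_{1}$, and using that $\langle a,b\rangle$ is $\nc{ff}$ together with the joint order-monomorphicity of the product projections. The only real difference is bookkeeping: the paper assembles a $2\times 2$ grid of pullbacks over the comma square and invokes \ref{pullback-comma lemma} to identify the resulting apex with $f/f$, whereas you take two successive pullbacks of $p$ and obtain the comparison map into $f/f$ directly from the universal property of the comma object, so your version never actually needs \ref{pullback-comma lemma}.
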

\begin{proof}
	Let $f\colon X\to Y$ be an $\nc{so}$-morphism and consider its kernel congruence $\xy\xymatrix{f/f\ar@<1ex>[r]^{k_{0}}\ar@<-1ex>[r]_{k_{1}} & X}\endxy$, which exists in $\ca{C}$ by (R1). We want to show that $f$ is the coinserter of $(k_{0},k_{1})$.
	
	So let $g\colon X\to Z\in\ca{C}$ be such that $gk_{0}\leq gk_{1}$. We can consider then the induced morphism $\langle f,g\rangle\colon X\to Y\times Z$. By (R2) we can factor this morphism as an $\nc{so}$-morphism followed by an $\nc{ff}$-morphism, say $\xy\xymatrix{X\ar@{->>}[r]^{p} & I\ar@{>->}[r]^{i} & Y\times Z}\endxy$. We then form the following diagram in $\ca{C}$, where we begin by forming the bottom right-hand square as a comma square and then the remaining three squares are pullbacks.
	\begin{center}
		\hfil
		\xy\xymatrix@=3em{P\ar[r]^{v_{1}}\ar[d]_{v_{0}} & P_{1}\ar[r]^{x_{1}}\ar[d]_{u_{1}} & X\ar@{->>}[d]^{p} \\
			P_{0}\ar[r]^{u_{0}}\ar[d]_{x_{0}} & C\ar[r]^{c_{1}}\ar[d]_{c_{0}}\ar@{}[dr]|{\leq} & I\ar[d]^{\pi_{Y}i}\\
			X\ar@{->>}[r]_{p} & I\ar[r]_{\pi_{Y}i} & Y}\endxy
		\hfil 
	\end{center}
	By an application of \ref{pullback-comma lemma} and its order-dual as well as the usual pullback gluing lemma we deduce that the big outer square resulting from the pasting of all four smaller ones is also a comma square. Then, since $\pi_{Y}ip=f$, we have $P\cong f/f$ and we can assume that $x_{0}v_{0}=k_{0}$ and $x_{1}v_{1}=k_{1}$. Also, observe that by (R3) we have that $u_{0},u_{1}$ and then also $v_{0},v_{1}$ are $\nc{so}$-morphisms.
	
	Now we want to show that $\pi_{Y}i$ is an iso. Since $(\pi_{Y}i)p=f$ is an $\nc{so}$-morphism, we already know, by \ref{properties of so-morphisms}, that $\pi_{Y}i$ is an $\nc{so}$-morphism as well. Thus, it suffices to show that it is also an $\nc{ff}$-morphism, which is equivalent to showing $c_{0}\leq c_{1}$. Since $u_{0}v_{0}=u_{1}v_{1}$ is an $\nc{so}$-morphism, so in particular an order-epimorphism, the latter inequality is equivalent to having $c_{0}u_{0}v_{0}\leq c_{1}u_{0}v_{0}$. This is in turn equivalent to $ic_{0}u_{0}v_{0}\leq ic_{1}u_{0}v_{0}$, because $i$ is an $\nc{ff}$-morphism. To prove this last inequality we now observe the following:
	\begin{displaymath}
	\pi_{Y}ic_{0}u_{0}v_{0}=\pi_{Y}ipx_{0}v_{0}=fx_{0}v_{0}=fk_{0}\leq fk_{1}=fx_{1}v_{1}=\pi_{Y}ipx_{1}v_{1}=\pi_{Y}ic_{1}u_{0}v_{0}
	\end{displaymath}
	\begin{displaymath}
	\pi_{Z}ic_{0}u_{0}v_{0}=\pi_{Z}ipx_{0}v_{0}=gx_{0}v_{0}=gk_{0}\leq gk_{1}=gx_{1}v_{1}=\pi_{Z}ipx_{1}v_{1}=\pi_{Z}ic_{1}u_{0}v_{0}
	\end{displaymath}
	Then the universal property of the product yields the desired inequality.
	
	Finally, we now have a morphism $\pi_{Z}i(\pi_{Y}i)^{-1}\colon Y\to Z$ such that $\pi_{Z}i(\pi_{Y}i)^{-1}f=\pi_{Z}i(\pi_{Y}i)^{-1}\pi_{Y}ip=\pi_{Z}ip=g$. Furthermore, we know already that $f$ is an order-epimorphism because it is an $\nc{so}$-morphism by assumption.
\end{proof}

Thus, we can officially strike condition (R4) from the definition of regularity and henceforth adopt the following more economical one.

\begin{definition}\label{our regularity}
	A category $\ca{C}$ will be called \emph{regular} if it satisfies the following:\newline
	(R1) $\ca{C}$ has all finite (weighted) limits.\newline
	(R2) $\ca{C}$ has ($\nc{so}$,$\nc{ff}$)-factorizations.\newline
	(R3) $\nc{so}$-morphisms are stable under pullback in $\ca{C}$.
\end{definition}



Similarly, we can now furthermore establish another equivalent characterization of regularity in terms of the existence of quotients for kernel congruences.

\begin{proposition}
	A finitely complete category $\ca{C}$ is regular if and only if the following hold:
	\begin{enumerate}
		\item Every kernel congruence in $\ca{C}$ has a coinserter.
		\item Effective morphisms are stable under pullback in $\ca{C}$.
	\end{enumerate}
\end{proposition}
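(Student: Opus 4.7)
For the forward direction, suppose $\ca{C}$ is regular. By \ref{simplifying regularity} condition (R4) also holds, so the classes of $\nc{so}$-morphisms and effective morphisms coincide. Given a kernel congruence $(f_0,f_1)$ of some $f\colon X\to Y$, I would $(\nc{so},\nc{ff})$-factor $f=mq$ via (R2); since $m$ is $\nc{ff}$, one checks directly that the commas $f/f$ and $q/q$ coincide (both classify pairs $(x_1,x_2)$ with $qx_1\leq qx_2$, the $\nc{ff}$-ness of $m$ being the key step). So $(f_0,f_1)$ is the kernel congruence of the effective morphism $q$, and \ref{Kernel Congruences and Coinserters}(1) then makes $q$ its coinserter, yielding (1). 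For (2), effective equals $\nc{so}$ by (R4), and $\nc{so}$ is stable under pullback by (R3), so effective is stable under pullback.

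For the backward direction, assume (1), (2), and finite completeness; the work is to establish (R2) and (R3). Given any $f\colon X\to Y$, let $q\colon X\to Q$ be the coinserter of its kernel congruence $(f_0,f_1)$, which exists by (1), and let $m\colon Q\to Y$ be the unique factor with $f=mq$ (existing because $ff_0\leq ff_1$). The coinserter $q$ is an $\nc{so}$-morphism by the earlier lemma, so the crux is to show $m$ is $\nc{ff}$, which by the lemma on kernel congruences is equivalent to $m_0\leq m_1$ for the kernel congruence $(m_0,m_1)$ of $m$. My key idea is to identify the canonical map $\pi\colon f/f\to m/m$ as the pullback of the jointly order-mono pair $(m_0,m_1)\colon m/m\to Q\times Q$ along $q\times q\colon X\times X\to Q\times Q$; this is a straightforward unravelling of the universal property of the comma object. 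Decomposing $q\times q=(q\times 1_Q)(1_X\times q)$, each factor is itself a pullback of $q$ along a projection, hence effective by (2). Pulling $(m_0,m_1)$ back in the two stages, $\pi$ is the composite of two successive pullbacks of effective morphisms, hence the composite of two effectives (by (2) applied twice), and in particular an $\nc{so}$-morphism by \ref{properties of so-morphisms} and the lemma that every coinserter is $\nc{so}$. Therefore $\pi$ is an order-epimorphism. Finally, \ref{Kernel Congruences and Coinserters}(2) ensures that $(f_0,f_1)$ is also the kernel congruence of $q$, so $qf_0\leq qf_1$, i.e.\ $m_0\pi\leq m_1\pi$; cancelling the order-epimorphism $\pi$ yields $m_0\leq m_1$ as required.

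For (R3), I would argue that every $\nc{so}$-morphism is already effective: given such an $e\colon A\to B$, the $(\nc{so},\nc{ff})$-factorization $e=me'$ just established gives, via the standard orthogonality argument (apply $e\perp m$ to the square $me'=1_B\cdot e$ to produce a section of $m$, then use that $e'$ is an order-epi to upgrade the section to an inverse), an isomorphism $m$, so $e\cong e'$ is a coinserter. Hence $\nc{so}$ equals effective, and (R3) follows from (2). The main obstacle is the pullback-chasing identification of $\pi$ together with the insistence on applying (2) twice to conclude that $\pi$ is an order-epimorphism; once that is in place, the remaining steps are routine orthogonality and assembly.
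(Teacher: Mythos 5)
Your proof is correct and follows essentially the same strategy as the paper's: both reduce the problem to showing that the comparison morphism $f/f\to m/m$ is an order-epimorphism by exhibiting it as (a composite of) pullbacks of the effective morphism $q$, and then cancel it against $m_{0}\pi\leq m_{1}\pi$; your decomposition $q\times q=(q\times 1_{Q})(1_{X}\times q)$ is just a repackaging of the paper's $2\times 2$ grid of pullbacks pasted onto the comma square. You are somewhat more explicit than the paper in deriving (R2) and (R3) from the factorization, but the mathematical content is the same.
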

\begin{proof}
	If $\ca{C}$ is regular, then it is easy to see that it satisfies the two conditions above by definition and by an appeal to part (2.) of \ref{Kernel Congruences and Coinserters}.
	
	Conversely, let us assume that $\ca{C}$ satisfies the two conditions in the statement. Consider any $f\colon X\to Y\in\ca{C}$, its kernel congruence $\xy\xymatrix{f/f\ar@<1ex>[r]^{f_{0}}\ar@<-1ex>[r]_{f_{1}} & X}\endxy$ and the coinserter $q\colon X\to Q$ in $\ca{C}$ of the latter, which exists by condition 1. Since $ff_{0}\leq ff_{1}$, there exists a unique $m\colon Q\to Y$ such that $f=mq$. It now suffices to show that $m$ is an $\nc{ff}$-morphism.
	
	For this, consider the kernel congruence $\xy\xymatrix{m/m\ar@<1ex>[r]^{m_{0}}\ar@<-1ex>[r]_{m_{1}} & Q}\endxy$ and form the following diagram where the bottom right-hand square is a comma and the other three are pullbacks.
	\begin{center}
		\hfil
		\xy\xymatrix@=3em{P\ar[r]^{v_{1}}\ar[d]_{v_{0}} & P_{1}\ar[r]^{x_{1}}\ar[d]_{u_{1}} & X\ar@{->>}[d]^{q} \\
			P_{0}\ar[r]^{u_{0}}\ar[d]_{x_{0}} & m/m\ar[r]^{m_{1}}\ar[d]_{m_{0}}\ar@{}[dr]|{\leq} & Q\ar[d]^{m}\\
			X\ar@{->>}[r]_{q} & Q\ar[r]_{m} & Y}\endxy
		\hfil 
	\end{center}
	Similarly to the proof of \ref{simplifying regularity}, we have $P\cong f/f$ and we can assume that $x_{0}v_{0}=f_{0}$ and $x_{1}v_{1}=f_{1}$. Furthermore, by the assumed stability of effective morphisms under pullback, we can deduce that $u_{0}v_{0}=u_{1}v_{1}$ is an order-epimorphism. Now we have that
	\begin{displaymath}
	m_{0}u_{0}v_{0}=qx_{0}v_{0}=qf_{0}\leq qf_{1}=qx_{1}v_{1}=m_{1}u_{1}v_{1}
	\end{displaymath}
	whence we deduce that $m_{0}\leq m_{1}$ and so that $m$ is an $\nc{ff}$-morphism.
\end{proof}

To end this section, let us list a few examples of regular categories. For more details on most of these one can consult \cite{Kurz-Velebil}.

\begin{example}\label{examples of regular}
	\begin{enumerate}
		\item $\nc{Pos}$ is regular as a $\nc{Pos}$-category. So is any category of enriched presheaves $[\ca{C}^{op},\nc{Pos}]$ for $\ca{C}$ a small category.
		\item Any ordinary regular category $\ca{C}$ is also regular in the $\nc{Pos}$-enriched sense when equipped with the discrete order on its Hom-sets. Indeed, in this case $\nc{ff}$-morphisms coincide with monomorphisms and $\nc{so}$-morphisms with strong epimorphisms. Note that $\nc{Pos}$ is an example of a category which is not regular in the ordinary sense, but is regular as an enriched category.
		\item \emph{Quasivarieties of ordered algebras} in the sense of Bloom and Wright\cite{Bloom & Wright} are regular categories\cite{Kurz-Velebil}. As particular examples here we have the categories $\nc{OrdMon}$ of ordered monoids, $\nc{OrdSGrp}$ of ordered semi-groups, $\nc{OrdCMon}$ of commutative ordered monoids and $\nc{OrdMon_{0}}$ of ordered monoids with the neutral element $0$ of the monoid operation as the minimum element for the order. These are all in fact varieties. An example of a quasivariety which is not a variety is the category $\nc{OrdMon_{can}}$ of cancellative monoids, i.e. those ordered monoids $(M,\cdot,\leq)$ satisfying the implications $x\cdot z\leq y\cdot z\implies x\leq y$ and $z\cdot x\leq z\cdot y\implies x\leq y$ for all $x,y,z\in M$.
		\item The categories $\nc{Nach}$ of \emph{Nachbin} spaces (or compact ordered spaces) and $\nc{Pries}$ of \emph{Priestley} spaces with continuous order-preserving functions in both instances are examples of regular categories. We shall have more to say on these in section 5.
		\item If $\ca{C}$ is monadic over $\nc{Pos}$ for a monad $T\colon\nc{Pos}\to\nc{Pos}$ which preserves $\nc{so}$-morphisms (i.e. surjections), then $\ca{C}$ is regular. An example of this kind is given by the category $S$-$\nc{Pos}$ of $S$-posets for any ordered monoid $S$ (see \cite{S-Pos}). The objects in the latter category are monoid actions $S\times X\to X$ on a poset $X$ which are monotone in both variables, while the morphisms are the monotone equivariant functions. 
	\end{enumerate}
\end{example}

\section{Calculus of Relations and Exactness}

Recall that in an ordinary regular category $\ca{C}$, the existence of a stable (regular epi,mono) factorization system allows for a well-behaved calculus of relations in $\ca{C}$. More precisely, the existence of the factorization system allows one to define the composition of two internal relations and then the stability of regular epimorphisms under pullback is precisely equivalent to the associativity of this composition. Essentially the same facts hold also in our $\nc{Pos}$-enriched setting.

If $\ca{E}$ is a regular category, then by a \emph{relation} in $\ca{E}$ we shall mean an order-subobject $R\rightarrowtail X\times Y$, i.e. a subobject of a product represented by an $\nc{ff}$-morphism. We shall write $R\colon X\looparrowright Y$ to denote that $R$ is a relation from $X$ to $Y$ in $\ca{E}$. The factorization system ($\nc{so}$,$\nc{ff}$) in $\ca{E}$ and the stability of $\nc{so}$-morphisms under pullback allow us to have a well-defined composition of relations in $\ca{E}$: given $R\colon X\looparrowright Y$ and $S\colon Y\looparrowright Z$ in $\ca{C}$, the relation $S\circ R\colon X\looparrowright Z$ is defined by first constructing the pullback square below
\begin{center}
	\hfil
	\xy\xymatrix{ & & T\ar[dl]_{t_{0}}\ar[dr]^{t_{1}} & & \\
		& R\ar[dl]_{r_{0}}\ar[dr]^{r_{1}} & & S\ar[dl]_{s_{0}}\ar[dr]^{s_{1}} & \\
		X & & Y & & Z}\endxy
	\hfil 
\end{center}
and then taking the ($\nc{so}$,$\nc{ff}$) factorization of $\langle r_{0}t_{0},s_{1}t_{1}\rangle\colon T\to X\times Z$, say
\begin{center}
\hfil
\xy\xymatrix@=4em{T\ar@{->>}[r]^{q} & M\ar@{>->}[r]^{\langle m_{0},m_{1}\rangle} & X\times Z}\endxy
\hfil
\end{center}
In other words, in our notation $S\circ R$ is the relation represented by the $\nc{ff}$-morphism $\langle m_{0},m_{1}\rangle$.

This leads to the locally posetal bicategory (a.k.a $\nc{Pos}$-category) $\mathrm{Rel}(\ca{E})$, whose objects are those of $\ca{E}$ and whose morphisms are the relations $R\colon X\looparrowright Y$ in $\ca{E}$. The identity morphism on the object $X$ in $\mathrm{Rel}(\ca{E})$ is the diagonal relation $\langle 1_{X},1_{X}\rangle\colon\Delta_{X}\rightarrowtail X\times X$ and composition of morphisms is given by composition of relations in $\ca{E}$.

If we forget about the 2-dimensional nature of the properties that define the two classes of morphisms in the factorization system ($\nc{so}$,$\nc{ff}$), then the structure and basic properties of $\mathrm{Rel}(\ca{E})$ are essentially the calculus of relations \emph{relative to a stable factorization system}, as explicated by Meisen \cite{Meisen}, Richter\cite{Richter}, Kelly\cite{Kelly Relations relative to factorizations} and others. Thus, we shall feel free to take for granted many of the basic facts concerning the structure of $\mathrm{Rel}(\ca{E})$ without proving them here. As an exception to this rule, we include the proof of the following lemma because it describes a way in which one can argue about relations in a regular category using generalized elements which will be particularly useful to us in subsequent proofs. Recall here that, given a relation $R\colon X\looparrowright Y$ and generalized elements $x\colon A\to X$, $y\colon A\to Y$ in $\ca{E}$, we write $(x,y)\in_{A}R$ to indicate that $\langle x,y\rangle\colon A\to X\times Y$ factors through $\langle r_{0},r_{1}\rangle$. Observe also that, given an $\nc{so}$-morphism $q\colon B\twoheadrightarrow A$ in $\ca{E}$, we have that $(x,y)\in_{A}R$ if and only if $(xq,yq)\in_{B}R$. Indeed, while the ``only if'' direction is obvious, for the converse note that $(xq,yq)\in_{B}R$ means the existence of a commutative square of the following form.
\begin{center}
\begin{tikzcd}
	B\ar[r,two heads,"q"]\ar[d] & A\ar[d,"{\langle x,y\rangle}"] \\
	R\ar[r,tail] & X\times Y
\end{tikzcd}
\end{center}
Then by the orthogonality between $\nc{so}$ and $\nc{ff}$-morphisms we have an induced diagonal $A\to R$ exhibiting $(x,y)\in_{A}R$.

\begin{lemma}
	Let $R\colon X\looparrowright Y$ and $S\colon Y\looparrowright Z$ be relations in a regular category $\ca{E}$ and consider any generalized elements $x\colon P\to X$ and $z\colon P\to Z$. Then $(x,z)\in_{P}S\circ R$ if and only if there exists an effective epimorphism $q\colon Q\twoheadrightarrow P$ and a generalized element $y\colon Q\to Y$ such that $(xq,y)\in_{Q}R$ and $(y,zq)\in_{Q}S$.
\end{lemma}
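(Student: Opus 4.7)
The plan is to prove both directions by unpacking the construction of $S\circ R$, namely the factorization $\langle r_{0}t_{0},s_{1}t_{1}\rangle = \langle m_{0},m_{1}\rangle \circ p$, where $p\colon T\twoheadrightarrow M$ is the $\nc{so}$-part of the factorization (I am renaming the letter from the definition to avoid clashing with the $q$ in the statement). In both directions the computations are essentially chasing the universal properties of a pullback and an ($\nc{so}$,$\nc{ff}$)-factorization.

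For the ``if'' direction, suppose we are given $q\colon Q\twoheadrightarrow P$ effective and $y\colon Q\to Y$ with $(xq,y)\in_{Q}R$ and $(y,zq)\in_{Q}S$. These witnesses give factorizations $\alpha\colon Q\to R$ and $\beta\colon Q\to S$ with $r_{0}\alpha=xq$, $r_{1}\alpha=y=s_{0}\beta$, $s_{1}\beta=zq$. The equality $r_{1}\alpha=s_{0}\beta$ is precisely the cocone condition at the apex $Y$, so by the pullback defining $T$ there is a unique $\gamma\colon Q\to T$ with $t_{0}\gamma=\alpha$ and $t_{1}\gamma=\beta$. Then $\langle m_{0},m_{1}\rangle\circ p\gamma=\langle r_{0}t_{0}\gamma, s_{1}t_{1}\gamma\rangle=\langle xq, zq\rangle$, which exhibits $(xq,zq)\in_{Q}S\circ R$. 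Since $q$ is an $\nc{so}$-morphism (being effective) and the representing morphism of $S\circ R$ is an $\nc{ff}$-morphism, the observation stated just before the lemma lets us cancel $q$ and conclude $(x,z)\in_{P}S\circ R$.

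For the ``only if'' direction, suppose $(x,z)\in_{P}S\circ R$ is witnessed by $\delta\colon P\to M$ with $m_{0}\delta=x$ and $m_{1}\delta=z$. The idea is to pull back the $\nc{so}$-morphism $p\colon T\twoheadrightarrow M$ along $\delta$, yielding a square
\begin{center}
\begin{tikzcd}
Q \ar[r,"\epsilon"]\ar[d,two heads,"q"'] & T\ar[d,two heads,"p"] \\
P \ar[r,"\delta"'] & M
\end{tikzcd}
\end{center}
By (R3) the morphism $q$ is an $\nc{so}$-morphism, and by \ref{simplifying regularity} it is therefore effective. Define $y\coloneqq r_{1}t_{0}\epsilon = s_{0}t_{1}\epsilon\colon Q\to Y$ (the two expressions agree because $T$ is the pullback of $r_{1}$ and $s_{0}$). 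Then $t_{0}\epsilon\colon Q\to R$ witnesses $(xq,y)\in_{Q}R$, since $r_{0}t_{0}\epsilon=m_{0}p\epsilon=m_{0}\delta q=xq$; analogously $t_{1}\epsilon\colon Q\to S$ witnesses $(y,zq)\in_{Q}S$ via $s_{1}t_{1}\epsilon=m_{1}\delta q=zq$.

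The main conceptual point is not any single step but the correct bookkeeping: one has to recognize that the ``middle object'' $y$ over $Q$ comes precisely from composing $\epsilon$ with either leg of the pullback square $T$, and that the reason we must pass to the cover $Q$ (rather than working directly over $P$) is that the factorization through $M$ does not in general lift to a factorization through $T$ without first pulling back $p$. The only nontrivial ingredients beyond diagram chasing are the stability of $\nc{so}$-morphisms under pullback (R3) and the identification of $\nc{so}$-morphisms with effective epimorphisms in a regular category, both of which are available from the previous section.
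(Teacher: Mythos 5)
Your proposal is correct and follows essentially the same route as the paper's own proof: both directions unpack the pullback defining $T$ and the ($\nc{so}$,$\nc{ff}$)-factorization, with the ``only if'' direction obtained by pulling back the $\nc{so}$-part along the witnessing morphism into the image and the ``if'' direction by inducing a morphism into $T$ and then cancelling the $\nc{so}$-morphism $q$ via the observation preceding the lemma. The only cosmetic difference is notation, and your justification that $q$ is effective (pullback stability of $\nc{so}$-morphisms plus their identification with effective morphisms) is exactly the reasoning the paper compresses into ``$q$ is an effective epimorphism because $e$ is such.''
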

\begin{proof}
	Consider the diagram below, where the square is a pullback.
	\begin{center}
		\hfil
		\xy\xymatrix{ & & T\ar[dl]_{t_{0}}\ar[dr]^{t_{1}} & & \\
			& R\ar[dl]_{r_{0}}\ar[dr]^{r_{1}} & & S\ar[dl]_{s_{0}}\ar[dr]^{s_{1}} & \\
			X & & Y & & Z}\endxy
		\hfil 
	\end{center}
	Then $S\circ R$ is given by the following image factorization
	$$\langle r_{0}t_{0},s_{1}t_{1}\rangle=\xy\xymatrix@=3.5em{T\ar@{->>}[r]^{e} & I\ar@{>->}[r]^>>>>>>>{\langle i_{0},i_{1}\rangle} & X\times Z}\endxy$$.
	
	Assume first that $(x,z)\in_{P}S\circ R$, i.e. there exists a morphism $u\colon P\to I$ such that $\langle i_{0},i_{1}\rangle u=\langle x,z\rangle$. We then form the pullback square below.
	\begin{center}
		\hfil
		\xy\xymatrix{Q\ar[d]_{v}\ar@{->>}[r]^{q} & P\ar[d]^{u} \\
			T\ar@{->>}[r]_{e} & I}\endxy
		\hfil 
	\end{center}
	Note that $q$ is an effective epimorphism because $e$ is such. Now set $y\coloneqq r_{1}t_{0}v=s_{0}t_{1}v\colon Q\to Y$. Then $\langle xq,y\rangle=\langle i_{0}uq,r_{1}t_{0}v\rangle=\langle i_{0}ev,r_{1}t_{0}v\rangle=\langle r_{0}t_{0}v,r_{1}t_{0}v\rangle=\langle r_{0},r_{1}\rangle t_{0}v$ and $\langle y,zq\rangle=\langle s_{0}t_{1}v,i_{1}uq\rangle=\langle s_{0}t_{1}v,i_{1}ev\rangle=\langle s_{0}t_{1}v,s_{1}t_{1}v\rangle=\langle s_{0},s_{1}\rangle t_{1}v$, so that $(xq,y)\in_{Q}R$ and $(y,zq)\in_{Q}S$.
	
	Conversely, assume that $(xq,y)\in_{Q}R$ and $(y,zq)\in_{Q}S$ for some $y\colon Q\to Y$ and effective epimorphism $q\colon Q\twoheadrightarrow P$. This means that there exist morphisms $u\colon Q\to R$ and $v\colon Q\to S$ such that $\langle r_{0},r_{1}\rangle u=\langle xq,y\rangle$ and $\langle s_{0},s_{1}\rangle v=\langle y,zq\rangle$. Since $r_{1}u=y=s_{0}v$, there exists a unique $w\colon Q\to T$ such that $t_{0}w=u$ and $t_{1}w=v$. Then $\langle i_{0},i_{1}\rangle ew=\langle r_{0}t_{0},s_{1}t_{1}\rangle w=\langle r_{0}u,s_{1}v\rangle=\langle xq,zq\rangle$, so that $(xq,zq)\in_{Q}SR$. Since $q$ is an $\nc{so}$-morphism, we can conclude that also $(x,z)\in_{P}SR$.
\end{proof}

The above lemma can actually be used to prove many of the fundamental properties of $\rm{Rel}(\ca{E})$. In general, $\rm{Rel}(\ca{E})$ is a \emph{tabular allegory with a unit} (see e.g. \cite{Elephant},\cite{Freyd-Scedrov}), where the anti-involution $(-)^{\circ}\colon\mathrm{Rel}(\ca{E})^{op}\to\mathrm{Rel}(\ca{E})$ is given by taking the \emph{opposite} relation. In particular, we have that Freyd's \emph{Modular Law} holds in $\mathrm{Rel}(\ca{E})$, i.e. 
\begin{displaymath}\label{Modular Law}
	QP\cap S\subseteq Q(P\cap Q^{\circ}S) \tag{ML}
\end{displaymath}
for any relations $P\colon X\looparrowright Y$, $Q\colon Y\looparrowright Z$ and $S\colon X\looparrowright Z$ in $\ca{E}$. The presence of the anti-involution $(-)^{\circ}$ implies that the Modular Law is equivalent to its dual form, namely the inclusion
\begin{displaymath}\label{Modular Law*}
QP\cap S\subseteq (Q\cap SP^{\circ})P \tag{ML*}
\end{displaymath}
for any relations $P,Q,S$ as above.

Every morphism $f\colon X\to Y\in\ca{E}$ defines a relation $X\looparrowright Y$ represented by the ff-morphism $\langle 1_{X},f\rangle\colon X\to X\times Y$, which we call its \emph{graph} and denote by the same letter. This assignment defines a faithful ordinary functor $\ca{E}_{0}\to\mathrm{Rel}(\ca{E})$ on the underlying ordinary category of $\ca{E}$ which is the identity on objects. Furthermore, in $\mathrm{Rel}(\ca{E})$ we have an adjunction $f\dashv f^{\circ}$, which means that the inclusions $f^{\circ}f\supseteq\Delta_{X}$ and $ff^{\circ}\subseteq\Delta_{Y}$ hold. We say then that the morphisms of $\ca{E}$ are \emph{maps} in the bicategory $\mathrm{Rel}(\ca{E})$. We should perhaps stress here that taking the graph of a morphism does not define a functor $\ca{E}\to\mathrm{Rel}(\ca{E})$ because the order of morphisms is not preserved. In fact, since $\mathrm{Rel}(\ca{E})$ is an allegory, the anti-involution $(-)^{\circ}$ forces any inclusion $f\subseteq g$ for morphisms $f,g\colon X\to Y$ to be an equality (see e.g. A3.2.3 in \cite{Elephant}).

The modular law also implies some restricted versions of distributivity of composition over binary intersections in $\mathrm{Rel}(\ca{E})$ (see e.g. A3.1.6 of \cite{Elephant}). Two particular instances of this which we would like to explicitly record for future reference are the following:
\begin{displaymath}\label{Map distributivity}
(R\cap S)f= Rf\cap Sf \tag{MD}
\end{displaymath}
\begin{displaymath}\label{Map distributivity*}
g^{\circ}(R\cap S)=g^{\circ}R\cap g^{\circ}S \tag{MD*}
\end{displaymath}
where $R,S$ are relations $Y\looparrowright Z$ and $f\colon X\to Y$ and $g\colon X\to Z$ are morphisms in $\ca{E}$.

\vspace{3mm}

While $\mathrm{Rel}(\ca{E})$ is a very useful category in terms of performing calculations with relations in $\ca{E}$, it does not really capture the enriched nature of $\ca{E}$. As we have mentioned earlier, $\mathrm{Rel}(\ca{E})$ in some sense only involves an ordinary category with a stable factorization system. In order to also capture the $\nc{Pos}$-enriched aspects of a regular category $\ca{E}$ we will also need to work in a different bicategory of relations. In terms of our goals in this paper, this need is related to our desire to identify the morphisms of $\ca{E}$ as the maps in a certain bicategory of relations, thus generalizing a familiar fact for ordinary regular categories. Indeed, while certainly any morphism $f\colon X\to Y\in\ca{E}$ has as its right adjoint in $\mathrm{Rel}(\ca{E})$ the opposite relation $f^{\circ}$, this is not a complete characterization of (graphs of) morphisms. As can be deduced by arguments essentially contained in \cite{Kelly Relations relative to factorizations}, being a map in $\mathrm{Rel}(\ca{E})$ is a weaker property than being the graph of a morphism in $\ca{E}$. 

Another reason for moving to a different bicategory of relations is dictated by the form of \emph{congruences} in this enriched setting and the way in which exactness is defined. This will become apparent a little bit later.

\begin{definition}
	A relation $R\colon X\looparrowright Y$ in the regular category $\ca{E}$ is called \emph{weakening} or \emph{weakening-closed} if, whenever $x,x'\colon A\to X$ and $y,y'\colon A\to Y$ are generalized elements in $\ca{E}$, the following  implication holds 
	\begin{displaymath}
	x'\leq x\hspace{1mm}\wedge\hspace{1mm} (x,y)\in_{A}R\hspace{1mm}\wedge\hspace{1mm} y\leq y'\implies (x',y')\in_{A}R
	\end{displaymath}
\end{definition}
\vspace{3mm}

\begin{remark}
	The property of a relation being weakening-closed can be viewed as an order-compatibility condition, especially if one thinks of a relation $R\colon X\to Y$ in this poset-enriched context as specifying that certain elements of $X$ are less than or equal to certain elements of $Y$. If one follows the terminology of 2-category theory, as the authors of \cite{Kurz-Velebil} do, this property would be referred to by saying that $R$ is a \emph{two-sided discrete fibration}. Since we have generally chosen to not really stress the 2-categorical viewpoint in this paper, we have accordingly adopted the above terminology which is inspired from logic.
\end{remark}

\vspace{3mm}

For any given object $X\in\ca{E}$, there is a weakening-closed relation $I_{X}\colon X\looparrowright X$ given by the comma $I_{X}\coloneqq 1_{X}/1_{X}$. Then it is easy to see that a relation $R\colon X\looparrowright Y$ is weakening-closed if and only if we have $R=I_{Y}RI_{X}$ in $\mathrm{Rel}(\ca{E})$. In particular, the relations $I_{X}$ act as identity elements for composition of weakening-closed relations. Thus, we can define a bicategory $\mathrm{Rel}_{w}(\ca{E})$ where the objects are again those of $\ca{E}$ but now the morphisms are the weakening-closed relations. 

\begin{remark}
	Perhaps we should note here that, although every morphism of $\mathrm{Rel}_{w}(\ca{E})$ is also a morphism in $\mathrm{Rel}(\ca{E})$ and composition in both categories is the same, this is not a functorial inclusion $\mathrm{Rel}_{w}(\ca{E})\hookrightarrow\mathrm{Rel}(\ca{E})$ because identity morphisms are not preserved.
\end{remark}

Now to any given morphism $f\colon X\to Y\in\ca{E}$ we can canonically associate two weakening-closed relations $f_{*}\colon X\looparrowright Y$ and $f^{*}\colon Y\looparrowright X$ via the following commas: $f_{*}\coloneqq f/1_{Y}$ and $f^{*}\coloneqq 1_{Y}/f$. We sometimes call $f_{*}$ and $f^{*}$ the \emph{hypergraph} and \emph{hypograph} of $f$ respectively. In terms of generalized elements $x\colon A\to X$ and $y\colon A\to Y$ in $\ca{E}$ we have $(x,y)\in_{A}f_{*}\iff fx\leq y$ and $(y,x)\in_{A}f^{*}\iff y\leq fx$.

The following are then easy to see, for example by arguing with generalized elements.

\begin{lemma}
	Let $\ca{E}$ be a regular category. Then for any $f\colon X\to Y$ and $g\colon Y\to Z$ in $\ca{E}$ we have
	\begin{enumerate}
		\item $(gf)_{*}=g_{*}f_{*}$.
		\item $(gf)^{*}=f^{*}g^{*}$.
	\end{enumerate}
\end{lemma}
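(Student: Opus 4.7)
My plan is to argue by checking membership via generalized elements, leveraging the lemma proved earlier in Section 3 which characterizes membership in a composite relation $S \circ R$ in terms of an effective epi $q\colon Q \twoheadrightarrow A$ and an intermediate element $y\colon Q \to Y$. The key ingredients in play will be: (i) the defining characterizations $(x,y) \in_A f_* \iff fx \leq y$ and $(y,x) \in_A f^* \iff y \leq fx$ from the comma object construction; (ii) the fact that composition in $\ca{E}$ is monotone (the $\mathsf{Pos}$-enrichment); and (iii) the fact that any effective epi in $\ca{E}$ is an order-epimorphism, proved earlier.

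For (1), I would unfold both sides on generalized elements $x\colon A \to X$ and $z\colon A \to Z$. The left-hand side $(x,z) \in_A (gf)_*$ is simply $gfx \leq z$. For the right-hand side $(x,z) \in_A g_* f_*$, the composition lemma provides an effective epi $q\colon Q \twoheadrightarrow A$ together with some $y\colon Q \to Y$ satisfying $fxq \leq y$ and $gy \leq zq$. The inclusion $(gf)_* \subseteq g_* f_*$ is immediate: given $gfx \leq z$, take $Q = A$, $q = 1_A$, and $y := fx$, so that $fx \leq y$ and $gy = gfx \leq z$. For the reverse inclusion, from $fxq \leq y$ monotonicity of composition gives $gfxq \leq gy \leq zq$, and since $q$ is an order-epimorphism we conclude $gfx \leq z$, i.e.\ $(x,z) \in_A (gf)_*$.

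For (2), the argument is entirely analogous, with the inequalities reversed. Namely, $(z,x) \in_A (gf)^*$ means $z \leq gfx$, while $(z,x) \in_A f^* g^*$ unfolds (again via the composition lemma) to the existence of an effective epi $q\colon Q \twoheadrightarrow A$ and $y\colon Q \to Y$ with $zq \leq gy$ and $y \leq fxq$. The easy direction uses $q = 1_A$ and $y = fx$, while the converse uses monotonicity to push $y \leq fxq$ to $gy \leq gfxq$, chains it with $zq \leq gy$, and then cancels $q$ by its being an order-epimorphism.

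There is no substantive obstacle: the whole proof is a routine unpacking once one has the generalized-elements lemma and the observation that composition is monotone. The only point worth underlining is that the $\mathsf{Pos}$-enrichment enters precisely at the step where we apply a morphism of $\ca{E}$ to both sides of an inequality between generalized elements — without this, neither implication would close — and that the effective-epi cancellation at the end genuinely requires $q$ to be an order-epimorphism rather than a mere epimorphism.
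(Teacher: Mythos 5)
Your proof is correct and follows exactly the route the paper indicates (the paper omits the details, remarking only that the claims are ``easy to see, for example by arguing with generalized elements''): you unfold both sides via the composition lemma, use monotonicity of composition for one inclusion, and cancel the effective epimorphism as an order-epimorphism for the other. The only minor point worth noting is that when taking $q=1_{A}$ in the easy direction one should observe that identities are indeed effective epimorphisms (being $\nc{so}$-morphisms in a regular category), or simply exhibit the factorization through the image directly, but this is immediate.
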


It is also easy to see that, for any $f,g\colon X\to Y\in\ca{E}$, we have equivalences
\begin{displaymath}
f\leq g\iff g_{*}\subseteq f_{*}\iff f^{*}\subseteq g^{*} 
\end{displaymath}
We can thus define two fully order-faithful functors $(-)_{*}\colon\ca{E}^{co}\hookrightarrow\mathrm{Rel}_{w}(\ca{E})$ and $(-)^{*}\colon\ca{E}^{op}\hookrightarrow\mathrm{Rel}_{w}(\ca{E})$, where $\ca{E}^{co}$ denotes the \lq\lq order-dual\rq\rq\space category, i.e. the category obtained from $\ca{E}$ by reversing the order on morphisms. Similarly, arguing with generalized elements we easily deduce the following.

\begin{lemma}
	For any $f\colon X\to Y$ in a regular category $\ca{E}$ we have $f^{*}f_{*}=f/f$ as relations in $\ca{E}$.
\end{lemma}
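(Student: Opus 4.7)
The plan is to translate the equality of relations into a statement about generalized elements via the lemma on composition of relations proved earlier in this section, and then unpack the commas defining $f_{*}$, $f^{*}$, and $f/f$.

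First, I would fix generalized elements $x,x'\colon A\to X$ and expand both sides. Recall that $(x,x')\in_{A}f/f$ precisely when $fx\leq fx'$ (this is the defining universal property of the comma object $f/f$). By the lemma on composing relations in $\ca{E}$ via generalized elements, $(x,x')\in_{A}f^{*}f_{*}$ if and only if there exist an effective epimorphism $q\colon Q\twoheadrightarrow A$ and a generalized element $y\colon Q\to Y$ such that $(xq,y)\in_{Q}f_{*}$ and $(y,x'q)\in_{Q}f^{*}$; by the comma descriptions of $f_{*}=f/1_{Y}$ and $f^{*}=1_{Y}/f$, these two conditions read $f(xq)\leq y$ and $y\leq f(x'q)$.

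For the inclusion $f/f\subseteq f^{*}f_{*}$, suppose $fx\leq fx'$. Then taking $Q=A$, $q=1_{A}$, and $y\coloneqq fx$ makes both required inequalities trivially true, so $(x,x')\in_{A}f^{*}f_{*}$. Conversely, for $f^{*}f_{*}\subseteq f/f$, given $q$ and $y$ satisfying the two inequalities above, composing them yields $(fx)q=f(xq)\leq f(x'q)=(fx')q$; since $q$ is an $\nc{so}$-morphism in a regular (hence inserter-possessing) category, it is an order-epimorphism by the lemma recorded earlier, so the $q$ on the right may be cancelled to conclude $fx\leq fx'$, i.e.\ $(x,x')\in_{A}f/f$. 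The two inclusions together give the claimed equality of relations.

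The only step that is not purely definitional is the cancellation of $q$ in the reverse inclusion, which is precisely where the $\nc{Pos}$-enriched aspect of the argument enters: we need the order-epimorphism property of $\nc{so}$-morphisms rather than mere epimorphism. Everything else is a direct reading of the comma universal properties.
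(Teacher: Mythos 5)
Your proof is correct and is precisely the ``arguing with generalized elements'' argument that the paper leaves to the reader for this lemma: you combine the composition-of-relations lemma with the comma descriptions of $f_{*}$, $f^{*}$ and $f/f$, and the only non-definitional step---cancelling the effective epimorphism $q$ using that $\nc{so}$-morphisms are order-epimorphisms---is exactly the right one. No gaps.
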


In particular, we see that $f\colon X\to Y$ is an $\nc{ff}$-morphism in $\ca{E}$ if and only if $f^{*}f_{*}=I_{X}$ in $\mathrm{Rel}_{w}(\ca{E})$. Similarly, a pair of morphisms $\begin{tikzcd}Y & X\ar[l,"f"']\ar[r,"g"] & Z\end{tikzcd}$ is jointly $\nc{ff}$ precisely when $f^{*}f_{*}\cap g^{*}g_{*}=I_{X}$.

Now just as the graph of every morphism $f\colon X\to Y$ induces an adjunction $f\dashv f^{\circ}$ in $\rm{Rel}(\mathcal{E})$, so do the hypergraph and hypograph of that morphism form an adjunction in the bicategory $\rm{Rel}_{w}(\mathcal{E})$.

\begin{lemma}
	For any $f\colon X\to Y$ in a regular category $\ca{E}$, there is an adjunction $f_{*}\dashv f^{*}$ in $\mathrm{Rel}_{w}(\ca{E})$.
\end{lemma}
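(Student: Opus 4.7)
The plan is to verify the two inclusions that define the adjunction $f_{*}\dashv f^{*}$ in $\mathrm{Rel}_{w}(\ca{E})$, namely the unit $I_{X}\subseteq f^{*}f_{*}$ and the counit $f_{*}f^{*}\subseteq I_{Y}$. The main tool is the lemma characterizing composition of relations in terms of generalized elements, combined with the explicit descriptions $(x,y)\in_{A}f_{*}\iff fx\leq y$, $(y,x)\in_{A}f^{*}\iff y\leq fx$, and $(x,x')\in_{A}I_{X}\iff x\leq x'$ (the last of which follows immediately from the universal property of the comma $1_{X}/1_{X}$).

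For the unit, I would take an arbitrary generalized element $(x,x')\in_{A}I_{X}$, so that $x\leq x'\colon A\to X$, and exhibit a witness of membership in $f^{*}f_{*}$ using the generalized-elements composition lemma. The obvious choice is $q=1_{A}$ (trivially an effective epimorphism) and $y\coloneqq fx\colon A\to Y$: then $fx\leq fx=y$ gives $(x,y)\in_{A}f_{*}$ while $y=fx\leq fx'$ gives $(y,x')\in_{A}f^{*}$, and hence $(x,x')\in_{A}f^{*}f_{*}$.

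For the counit, suppose $(y,y')\in_{A}f_{*}f^{*}$. The composition lemma yields an effective epimorphism $q\colon Q\twoheadrightarrow A$ and a morphism $x\colon Q\to X$ with $(yq,x)\in_{Q}f^{*}$ and $(x,y'q)\in_{Q}f_{*}$, i.e.\ $yq\leq fx\leq y'q$. Since $q$ is effective, it is in particular an $\nc{so}$-morphism and hence an order-epimorphism (by the earlier lemma), so we may cancel to conclude $y\leq y'$, that is, $(y,y')\in_{A}I_{Y}$. There is no real obstacle beyond correctly invoking the order-epi property of effective morphisms; the proof is essentially a translation between the relation-theoretic and order-theoretic descriptions of $f_{*}$, $f^{*}$ and $I$.
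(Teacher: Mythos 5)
Your proof is correct and follows essentially the same route as the paper: the counit verification is exactly the paper's computation $s_{1}q_{1}\leq fs_{2}q_{1}=fr_{1}q_{2}\leq r_{2}q_{2}$ rephrased through the generalized-elements lemma, and your unit argument is a direct unwinding of the paper's observation that $f^{*}f_{*}=f/f\supseteq I_{X}$. The only cosmetic difference is that the paper cites the prior identification of $f^{*}f_{*}$ with the kernel congruence for the unit, whereas you exhibit the witness $y=fx$ explicitly.
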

\begin{proof}
	We saw above that $f^{*}f_{*}$ is precisely the kernel congruence of $f$, so clearly we have $I_{X}\subseteq f^{*}f_{*}$. To form the composition $f_{*}f^{*}$ we consider the diagram below, where the top square is a pullback and the other two are commas.
	\begin{center}
		\hfil 
		\xy\xymatrix{ & & Q\ar[dl]_{q_{1}}\ar[dr]^{q_{2}} & &  \\
			& f^{*}\ar[dl]_{s_{1}}\ar[dr]^{s_{2}} & & f_{*}\ar[dl]_{r_{1}}\ar[dr]^{r_{2}} & \\
			Y\ar@{=}[dr]\ar@{}[rr]|{\leq} & & X\ar[dl]_{f}\ar[dr]^{f}\ar@{}[rr]|{\leq} & & Y\ar@{=}[dl]  \\
			& Y & & Y & }\endxy
		\hfil 
	\end{center}
	Now by definition we have that $f_{*}f^{*}$ is given by the image of $\langle s_{1}q_{1},r_{2}q_{2}\rangle$. But $s_{1}q_{1}\leq fs_{2}q_{1}=fr_{1}q_{2}\leq r_{2}q_{2}$, so that $\langle s_{1}q_{1},r_{2}q_{2}\rangle$ factors through $I_{Y}=1_{Y}/1_{Y}$. This yields the inclusion $f_{*}f^{*}\subseteq I_{Y}$.
\end{proof}

In other words, every morphism $f\colon X\to Y$ in a regular category $\ca{E}$ is a map in $\mathrm{Rel}_{w}(\ca{E})$ via its hypergraph $f_{*}$. Our first result in this paper is that in any regular category $\ca{E}$ this is now indeed a complete characterization of morphisms, i.e. every map in $\rm{Rel}_{w}(\ca{E})$ is of the form $f_{*}$ for a (necessarily unique) morphism $f\colon X\to Y\in\ca{E}$.

\begin{theorem}\label{Pos-enriched maps are morphisms}
	If $\phi\colon X\looparrowright Y\in\rm{Rel}_{w}(\ca{E})$ has a right adjoint in $\rm{Rel}_{w}(\mathcal{E})$, then there exists a (necessarily unique) morphism $f\colon X\to Y\in\ca{E}$ such that $\phi=f_{*}$.
\end{theorem}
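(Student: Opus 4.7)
The plan is to construct $f$ by descending a morphism $y\colon Q\to Y$ along an effective epimorphism $q\colon Q\twoheadrightarrow X$ that arises from the unit of the adjunction. First, I apply the inclusion $I_X\subseteq\psi\phi$ to the universal element $(1_X,1_X)\in_X I_X$; the generalized-element description of relation composition then furnishes an $\nc{so}$-morphism $q\colon Q\twoheadrightarrow X$ together with a morphism $y\colon Q\to Y$ satisfying $(q,y)\in_Q\phi$ and $(y,q)\in_Q\psi$.

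Second, I descend $y$ along $q$ to obtain $f\colon X\to Y$. By \ref{simplifying regularity} combined with part (2) of \ref{Kernel Congruences and Coinserters}, $q$ is the coinserter of its kernel congruence $(q_0,q_1)\colon q/q\rightrightarrows Q$, where $qq_0\leq qq_1$; it therefore suffices to verify $yq_0\leq yq_1$. To see this, pull back $(y,q)\in_Q\psi$ along $q_0$ to get $(yq_0,qq_0)\in_{q/q}\psi$; the weakening-closedness of $\psi$ together with $qq_0\leq qq_1$ upgrades this to $(yq_0,qq_1)\in_{q/q}\psi$. Composing with the pullback $(qq_1,yq_1)\in_{q/q}\phi$ of $(q,y)\in_Q\phi$ along $q_1$ then produces $(yq_0,yq_1)\in_{q/q}\phi\psi$, and the counit $\phi\psi\subseteq I_Y$ gives $yq_0\leq yq_1$. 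The coinserter property yields a unique $f\colon X\to Y$ with $fq=y$.

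It remains to verify $\phi=f_*$. For $f_*\subseteq\phi$: since $q$ is an $\nc{so}$-morphism, the relation $(q,fq)=(q,y)\in_Q\phi$ descends to $(1_X,f)\in_X\phi$; pulling back along the first projection $f_{*,0}\colon f_*\to X$ and invoking the weakening-closedness of $\phi$ together with the defining inequality $f\cdot f_{*,0}\leq f_{*,1}$ delivers the inclusion. For $\phi\subseteq f_*$: dually, $(y,q)\in_Q\psi$ descends to $(f,1_X)\in_X\psi$; pulling back along $\phi_0\colon\phi\to X$ and composing with the universal $(\phi_0,\phi_1)\in_\phi\phi$ produces $(f\phi_0,\phi_1)\in_\phi\phi\psi\subseteq I_Y$, i.e.\ $f\phi_0\leq\phi_1$, exhibiting a factorization of the universal element of $\phi$ through $f_*$. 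Uniqueness of $f$ is immediate, since $(-)_*\colon\ca{E}^{co}\hookrightarrow\mathrm{Rel}_w(\ca{E})$ was already shown to be fully order-faithful.

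The main obstacle I anticipate is the descent step: recognizing that the correct compatibility condition for descending $y$ along $q$ is the inequality $yq_0\leq yq_1$ (rather than the equation familiar from the ordinary 1-categorical analogue), and seeing that this inequality is precisely what the weakening-closedness of $\psi$ combined with the counit $\phi\psi\subseteq I_Y$ is engineered to deliver. This is the one place where the $\nc{Pos}$-enriched character of the argument genuinely departs from the classical Freyd--Scedrov-style characterization of maps in an allegory.
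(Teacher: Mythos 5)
Your proof is correct, but it takes a genuinely different route from the paper's. The paper forms the relation $\phi\cap\psi^{\circ}$ (as a pullback of the two representing $\nc{ff}$-morphisms), proves that its projection to $X$ is an isomorphism by showing separately that it is an effective epimorphism and an $\nc{ff}$-morphism, defines $f$ from that isomorphism, and then establishes $\phi=f_{*}$ by a calculation with the modular law in $\mathrm{Rel}(\ca{E})$ (namely $\psi f=\psi(\phi\cap\psi^{\circ})\supseteq\psi\phi\cap\Delta_{X}\supseteq\Delta_{X}$, hence $\phi\subseteq\phi\psi f\subseteq I_{Y}f=f_{*}$). You instead extract a cover $q\colon Q\twoheadrightarrow X$ with a lift $y\colon Q\to Y$ from the unit, verify the descent inequality $yq_{0}\leq yq_{1}$ using weakening-closedness of $\psi$ and the counit, and descend along the coinserter; the verification $\phi=f_{*}$ is then done entirely with generalized elements and weakening-closedness, with no appeal to the modular law. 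Your version is arguably more self-contained and makes the enriched phenomenon (an inequality as descent datum) explicit, whereas the paper's version has the side benefit of exhibiting the graph $\phi\cap\psi^{\circ}$ directly, an object it reuses later (e.g.\ in the definition of $\mathfrak{gr}(R_{*})$). Two small points: the reference for ``$q$ is the coinserter of its kernel congruence'' should be part (1) of \ref{Kernel Congruences and Coinserters}, not part (2) (and since the composition lemma already hands you an \emph{effective} epimorphism, you do not even need \ref{simplifying regularity} here); and when you ``compose'' memberships such as $(yq_{0},qq_{1})\in_{q/q}\psi$ with $(qq_{1},yq_{1})\in_{q/q}\phi$ to land in $\phi\psi$, you are using the easy direction of the paper's generalized-element lemma for composition of relations (with the identity as the covering epi) — worth citing explicitly, but not a gap.
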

\begin{proof}
	Let $\psi\colon Y\looparrowright X\in\mathrm{Rel}_{w}(\ca{E})$ be the right adjoint, so that we have $I_{X}\subseteq \psi\phi$ and $\phi\psi\subseteq I_{Y}$. Suppose also that $\phi$ and $\psi$ are represented respectively by the ff-morphisms $\langle\phi_{0},\phi_{1}\rangle\colon T\rightarrowtail X\times Y$ and $\langle\psi_{0},\psi_{1}\rangle\colon T'\rightarrowtail Y\times X$. We next form the pullback square below, so that $\langle\phi_{0}u,\phi_{1}u\rangle=\langle\psi_{1}u',\psi_{0}u'\rangle\colon S\rightarrowtail X\times Y$ represents the relation $\phi\cap\psi^{\circ}\colon X\looparrowright Y\in\rm{Rel}(\ca{E})$. We first want to show that $\phi_{0}u=\psi_{1}u'$ is an isomorphism, in which case we will have $\phi\cap\psi^{\circ}=f$ for the morphism $f\coloneqq\phi_{1}u(\phi_{0}u)^{-1}\colon X\to Y$.
	\begin{center}
		\hfil
		\xy\xymatrix@=3em{S\ar@{>->}[r]^{u'}\ar@{>->}[d]_{u} & T'\ar@{>->}[d]^{\langle\psi_{1},\psi_{0}\rangle} \\
			T\ar@{>->}[r]_{\langle\phi_{0},\phi_{1}\rangle} & X\times Y}\endxy
		\hfil 
	\end{center}
	
	First of all, since $I_{X}\subseteq \psi\phi$, we have $(1_{X},1_{X})\in_{X}\psi\phi$ and hence there exist an effective epimorphism $e\colon P\twoheadrightarrow X$ and a $y\colon P\to Y$ such that $(e,y)\in_{P}\phi$ and $(y,e)\in_{P}\psi$. Then we have $(e,y)\in_{P}\phi\cap\psi^{\circ}$ and so there exists a $v\colon P\to S$ such that $\langle\phi_{0}u,\phi_{1}u\rangle v=\langle e,y\rangle$. In particular, since $\phi_{0}uv=e$ is an effective epimorphism, we deduce that $\phi_{0}u$ is an effective epimorphism as well.
	
	Now it suffices to show that $\phi_{0}u$ is also an $\nc{ff}$-morphism. So let $a,b\colon A\to S$ be such that $\phi_{0}ua\leq\phi_{0}ub$. Then $(\phi_{0}ua,\phi_{0}ub)\in_{A}I_{X}$ and hence $(\phi_{0}ua,\phi_{0}ub)\in_{A}\psi\phi$, so there is an effective epimorphism $e\colon P\twoheadrightarrow A$ and a $z\colon P\to Y$ such that $(\phi_{0}uae,z)\in_{P}\phi$ and $(z,\phi_{0}ube)\in_{P}\psi$. Since we also clearly have
	\begin{displaymath}
	(\phi_{1}uae,\phi_{0}uae)=(\psi_{0}u'ae,\psi_{1}u'ae)\in_{P}\psi,
	\end{displaymath}
	we get $(\phi_{1}uae,z)\in_{P}\phi\psi$, which implies $(\phi_{1}uae,z)\in_{P}I_{Y}$, i.e. that $\phi_{1}uae\leq z$.
	
	Similarly, since $(\phi_{0}ube,\phi_{1}ube)\in_{P}\phi$ and $(z,\phi_{0}ube)\in\psi$, we consequently have that $(z,\phi_{1}ube)\in_{P}\phi\psi$ and hence $(z,\phi_{1}ube)\in_{P}I_{Y}$, which is to say that $z\leq\phi_{1}ube$. We now have $\phi_{1}uae\leq z\leq\phi_{1}ube$, hence $\phi_{1}uae\leq\phi_{1}ube$, which in turn implies that $\phi_{1}ua\leq\phi_{1}ub$, because $e$ is an order-epimorphism. Since also  $\phi_{0}ua\leq\phi_{0}ub$, we obtain $a\leq b$ because $\langle\phi_{0}u,\phi_{1}u\rangle$ is an $\nc{ff}$-morphism.
	
	We now claim that $\phi=f_{*}$. To see this, observe first that $f_{*}=I_{Y}f=I_{Y}(\phi\cap\psi^{\circ})\subseteq I_{Y}\phi=\phi$, where the last equality follows because $\phi$ is weakening-closed. Thus, $f_{*}\subseteq\phi$. But by an application of the modular law \ref{Modular Law} in $\mathrm{Rel}(\ca{E})$ we also have $\psi f=\psi(\phi\cap\psi^{\circ})\supseteq\psi\phi\cap \Delta_{X}\supseteq I_{X}\cap\Delta_{X}=\Delta_{X}$ and so that $\phi\psi f\supseteq\phi$. Then $f_{*}=I_{Y}f\supseteq\phi\psi f\supseteq\phi$ and so we conclude that $f_{*}=\phi$.
	
	Finally, for the uniqueness claim let us assume that $f,g\colon X\to Y$ are such that $f_{*}=g_{*}$. By an earlier observation we know that the inclusion $f_{*}\subseteq g_{*}$ is equivalent to the inequality $g\leq f$. Then we have both $f\leq g$ and $g\leq f$, whence $f=g$.
\end{proof}
\vspace{5mm}

Next, let us comment here on how the calculus of relations in a regular category $\mathcal{E}$ can be used to express various limit properties therein. For example, the statement that a pair of morphisms $\begin{tikzcd}X & Z\ar[l,"f"']\ar[r,"g"] & Y\end{tikzcd}$ represents a given relation $R\colon X\looparrowright Y$ is equivalent to the following two equalities between relations:
\begin{enumerate}
	\item $R=gf^{\circ}$.
	\item $f^{*}f_{*}\cap g^{*}g_{*}=I_{Z}$.
\end{enumerate}
Note here that we cannot replace condition (1.) by $R=g_{*}f^{*}$, even if $R$ is weakening-closed. Similarly, we cannot replace (2.) by $f^{\circ}f\cap g^{\circ}g=\Delta_{Z}$, because the latter means that $(f,g)$ are only jointly monomorphic instead of jointly order-monomorphic.

Based on the above observation, now consider the statement that a commutative square
\begin{displaymath}
\begin{tikzcd}
	P\ar[r,"p_{1}"]\ar[d,"p_{0}"'] & Y\ar[d,"g"] \\
	X\ar[r,"f"'] & Z
\end{tikzcd}
\end{displaymath}
is a pullback. It is easy to see that this is equivalent to the following pair of equalities:
\begin{enumerate}
	\item $g^{\circ}f=p_{1}p_{0}^{\circ}$.
	\item $p_{1}^{*}p_{1*}\cap p_{0}^{*}p_{0*}=I_{P}$.
\end{enumerate}
\vspace{2mm}

Similarly, the statement that the square
\begin{displaymath}
	\begin{tikzcd}
		P\ar[r,"p_{1}"]\ar[d,"p_{0}"']\ar[dr,phantom,"\leq"] & Y\ar[d,"g"] \\
		X\ar[r,"f"'] & Z
	\end{tikzcd}
\end{displaymath}
is a comma is equivalent to:
\begin{enumerate}
	\item $g^{*}f_{*}=p_{1}p_{0}^{\circ}$.
	\item $p_{1}^{*}p_{1*}\cap p_{0}^{*}p_{0*}=I_{P}$.
\end{enumerate}
\vspace{5mm}

Now we turn to discussing exactness for $\nc{Pos}$-categories. First, let us recall the definition of congruence relation from \cite{Kurz-Velebil}. This can be seen as the ordered analogue of equivalence relations in an ordinary category. It is also a special case of a more general notion of congruence for 2-categories.

\begin{definition}
	Let $X$ be an object of the regular category $\ca{E}$. A \emph{congruence} on $X$ is a relation $E\colon X\looparrowright X\in\mathrm{Rel}_{w}(\ca{E})$ which is reflexive and transitive. 
	
	We say that the congruence $E$ is \emph{effective} if there exists a morphism $f\colon X\to Y\in\ca{E}$ such that $E=f/f$.
\end{definition}

Equivalently, we can say that $E\colon X\looparrowright X$ is a congruence if it is a transitive relation such that $E\supseteq I_{X}$. In essence, a congruence is a pre-order relation on $X$ which is compatible with the canonical order relation on $X$, the latter expressed by the requirement that it is weakening-closed. We think of a congruence as imposing additional inequalities on $X$, just as an equivalence relation corresponds to the idea of imposing new equalities. 

With the notion of congruence in hand, we are lead naturally to the notion of (Barr-)exactness for $\nc{Pos}$-categories as considered by Kurz-Velebil in \cite{Kurz-Velebil}.

\begin{definition}
	A regular category $\ca{E}$ is called \emph{exact} if every congruence in $\ca{E}$ is effective.
\end{definition}

\begin{example}
	\begin{enumerate}
		\item $\nc{Pos}$ is exact and so is any presheaf category $[\ca{C},\nc{Pos}]$ for any small category $\ca{C}$.
		\item The locally discrete category $\nc{Set}$ is an example of a category which is regular but not exact (see \cite{Kurz-Velebil}).
		\item Generalizing the case of $\nc{Pos}$, any \emph{variety of ordered algebras}, always in the sense of Bloom \& Wright\cite{Bloom & Wright}, is an exact category. Particular examples here are furnished by the categories $\nc{OrdSGrp}$, $\nc{OrdMon}$, $\nc{OrdMon_{0}}$ and $\nc{OrdCMon}$. On the other hand, the quasivariety $\nc{OrdCMon_{t.f.}}$ is not exact.
		\item There are also examples of exact categories which are not varieties, but are monadic over $\nc{Pos}$. One such, which will appear in more detail in section 5, is the category $\nc{Nach}$ of Nachbin spaces. Another is given by the category $S$-$\nc{Pos}$ for any ordered monoid $S$.
	\end{enumerate}
\end{example}

A congruence in a regular category $\ca{E}$, being a transitive relation, is an idempotent when considered as a morphism in either of $\mathrm{Rel}(\ca{E})$ and $\mathrm{Rel}_{w}(\ca{E})$. When it is moreover effective, then it actually is a \emph{split} idempotent in the latter bicategory. Indeed, if $E=f/f$ for some $f\colon X\to Y$, then we can assume that $f$ is actually the coinserter of $E$ by \ref{Kernel Congruences and Coinserters}. Then we have $f^{*}f_{*}=f/f=E$ and $f_{*}f^{*}=I_{Y}$. The next proposition shows that this splitting actually characterizes effective congruences. This is analogous to a familiar fact for ordinary regular categories, where an equivalence relation splits in the bicategory of relations if and only if it occurs as a kernel pair.

\begin{proposition}\label{effective=splitting}
	Let $E\colon X\looparrowright X$ be a congruence in the regular category $\ca{E}$. Then $E$ is effective if and only if it splits as an idempotent in $\mathrm{Rel}_{w}(\ca{E})$.
\end{proposition}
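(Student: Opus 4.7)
The plan is to use Theorem \ref{Pos-enriched maps are morphisms} as the key tool for the converse direction, since the forward direction is essentially established in the remarks immediately preceding the proposition.

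For ``effective $\Rightarrow$ splits,'' it suffices to recall that if $E = f/f$ we may replace $f$ by the coinserter of $E$ (invoking \ref{Kernel Congruences and Coinserters}), so that $(f_{*}, f^{*})$ splits $E$: the equality $f^{*}f_{*} = f/f = E$ is the lemma of Section 3 identifying the kernel congruence with $f^{*}f_{*}$, and $f_{*}f^{*} = I_{Y}$ because $f$ is an $\nc{so}$-morphism. The one small check is $I_{Y} \subseteq f_{*}f^{*}$: one tests with generalized elements, noting that $(f,f) \in_{X} f_{*}f^{*}$ holds via the intermediate element $1_{X}$ of $X$, and then descends along the $\nc{so}$-morphism $f$ to obtain $(1_{Y}, 1_{Y}) \in_{Y} f_{*}f^{*}$.

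For the converse, suppose $E = \psi\phi$ with $\phi\psi = I_{Y}$, for some $\phi\colon X\looparrowright Y$ and $\psi\colon Y\looparrowright X$ in $\mathrm{Rel}_{w}(\ca{E})$. Reflexivity of $E$ gives $I_{X} \subseteq E = \psi\phi$, and together with $\phi\psi = I_{Y}$ this is exactly the statement that $\phi \dashv \psi$ in $\mathrm{Rel}_{w}(\ca{E})$. Theorem \ref{Pos-enriched maps are morphisms} then provides a (necessarily unique) morphism $f\colon X \to Y \in \ca{E}$ with $\phi = f_{*}$. Now both $\psi$ and $f^{*}$ are right adjoints of $f_{*}$ in the locally posetal bicategory $\mathrm{Rel}_{w}(\ca{E})$, and uniqueness of right adjoints (obtained by chaining the unit of one adjunction with the counit of the other, $\psi \subseteq \psi \phi f^{*} \subseteq f^{*}$ and symmetrically) forces $\psi = f^{*}$. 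Therefore $E = \psi \phi = f^{*} f_{*} = f/f$, so $E$ is effective.

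The only substantive ingredient is Theorem \ref{Pos-enriched maps are morphisms}; everything else is routine bookkeeping about adjoints in the bicategory of weakening relations, and I do not anticipate any obstacle beyond that invocation.
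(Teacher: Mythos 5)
Your proof is correct and follows essentially the same route as the paper: the forward direction is the paper's remark immediately preceding the proposition, and the converse is exactly the paper's argument (reflexivity plus weakening-closure give $I_{X}\subseteq E=\psi\phi$, hence $\phi\dashv\psi$, and \ref{Pos-enriched maps are morphisms} produces $f$ with $\phi=f_{*}$ and, by uniqueness of right adjoints, $\psi=f^{*}$). The only blemish is that your displayed chain for uniqueness of right adjoints is written backwards --- it should read $f^{*}=I_{X}f^{*}\subseteq\psi\phi f^{*}=\psi f_{*}f^{*}\subseteq\psi$ and symmetrically $\psi\subseteq f^{*}f_{*}\psi=f^{*}\phi\psi\subseteq f^{*}$ --- but the intended standard argument is sound.
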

\begin{proof}
	Suppose that $E=\psi\phi$ for some $\phi\colon X\looparrowright Y$ and $\psi\colon Y\looparrowright X$ in $\mathrm{Rel}_{w}(\ca{E})$ with $\phi\psi=I_{Y}$. Since $E$ is reflexive and weakening-closed, we have $\psi\phi=E\supseteq I_{X}$. Since also trivially $\phi\psi\subseteq I_{Y}$, we have $\phi\dashv\psi$ in $\mathrm{Rel}_{w}(\ca{E})$ and so by \ref{Pos-enriched maps are morphisms} we have that $\phi=f_{*}$ and $\psi=f^{*}$ for some morphism $f\colon X\to Y\in\ca{E}$. Thus, we obtain $E=\psi\phi=f^{*}f_{*}=f/f$.
\end{proof}
\vspace{3mm}

In the following section we will embark on the goal of constructing the exact completion $\ca{E}_{ex/reg}$ of a regular category $\ca{E}$ by a process of splitting idempotents in a bicategory of relations and then taking maps in the resulting bicategory. This is essentially an attempt to mimic the construction of the ordinary exact completion of a regular category, as initially described by Lawvere in \cite{Perugia notes} and then with more details for example in \cite{SC}, \cite{Freyd-Scedrov},\cite{Elephant}, motivated by the combination of the two results contained in \ref{Pos-enriched maps are morphisms} and \ref{effective=splitting}.

However, even if the reader is not familiar with the construction of the splitting of idempotents, the next proposition can serve as a type of heuristic for coming up with the definition of morphisms in the completion $\ca{E}_{ex/reg}$. It will also be of practical use a little bit later. Recall here that in a regular category we say $\xy\xymatrix{E\ar@<1ex>[r]^{e_{0}}\ar@<-1ex>[r]_{e_{1}} & X\ar@{->>}[r]^{p} & P}\endxy$ is an \emph{exact sequence} if $(e_{0},e_{1})$ is the kernel congruence of $p$ and also $p$ is the coinserter of $(e_{0},e_{1})$. In terms of the calculus of relations, exactness of the sequence is equivalent to the equalities $p^{*}p_{*}=E$ and $p_{*}p^{*}=I_{P}$.

\begin{proposition}\label{morphisms between quotients of congruences}
	Let $\xy\xymatrix{E\ar@<1ex>[r]\ar@<-1ex>[r] & X\ar@{->>}[r]^{p} & P}\endxy$ and $\xy\xymatrix{F\ar@<1ex>[r]\ar@<-1ex>[r] & Y\ar@{->>}[r]^{q} & Q}\endxy$ be exact sequences in the regular category $\mathcal{E}$. Then there is an order-reversing bijection between the following:
	\begin{enumerate}
		\item Morphisms $P\to Q$ in $\ca{E}$.
		\item Relations $R_{*}\colon X\looparrowright Y\in\mathrm{Rel}_{w}(\ca{E})$ for which there exists another relation $R^{*}\colon Y\looparrowright X\in\mathrm{Rel}_{w}(\ca{E})$ such that the following are satisfied:
		\begin{itemize}
			\item $FR_{*}E=R_{*}$ and $ER^{*}F=R^{*}$.
			\item $R^{*}R_{*}\supseteq E$ and $R_{*}R^{*}\subseteq F$.
		\end{itemize}
	\end{enumerate}
\end{proposition}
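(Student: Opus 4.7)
The plan is to exploit \ref{Pos-enriched maps are morphisms} by reducing the data of a morphism $P\to Q$ to an adjunction in $\mathrm{Rel}_{w}(\ca{E})$, and then to translate such adjunctions back and forth through the two quotients $p,q$. Concretely, the forward map will send $f\colon P\to Q$ to
\begin{displaymath}
R_{*} := q^{*}f_{*}p_{*}, \qquad R^{*} := p^{*}f^{*}q_{*},
\end{displaymath}
and the inverse will send $(R_{*},R^{*})$ to the morphism corresponding to the adjunction $\phi\dashv\psi$ in $\mathrm{Rel}_{w}(\ca{E})$, where $\phi:=q_{*}R_{*}p^{*}$ and $\psi:=p_{*}R^{*}q^{*}$. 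Throughout I will lean on the exact-sequence identities $p^{*}p_{*}=E$, $p_{*}p^{*}=I_{P}$, $q^{*}q_{*}=F$, $q_{*}q^{*}=I_{Q}$, together with the always-true adjunction $f_{*}\dashv f^{*}$ in $\mathrm{Rel}_{w}(\ca{E})$, and the reflexivity/transitivity of the congruences $E$ and $F$.

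For the forward map, I would first verify the stability conditions, using the exact-sequence equalities to cancel:
\begin{displaymath}
FR_{*}E = q^{*}q_{*}\cdot q^{*}f_{*}p_{*}\cdot p^{*}p_{*} = q^{*}(q_{*}q^{*})f_{*}(p_{*}p^{*})p_{*} = q^{*}f_{*}p_{*} = R_{*},
\end{displaymath}
and symmetrically $ER^{*}F=R^{*}$. Then the inclusions follow from $f^{*}f_{*}\supseteq I_{P}$ and $f_{*}f^{*}\subseteq I_{Q}$:
\begin{displaymath}
R^{*}R_{*} = p^{*}f^{*}(q_{*}q^{*})f_{*}p_{*} = p^{*}(f^{*}f_{*})p_{*}\supseteq p^{*}p_{*} = E,
\end{displaymath}
and dually $R_{*}R^{*}\subseteq F$, so the pair satisfies the stated conditions.

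For the reverse direction, a short preliminary observation is that the given conditions, combined with the reflexivity and transitivity of $E,F$, force $FR_{*}=R_{*}=R_{*}E$ and $ER^{*}=R^{*}=R^{*}F$. Using these I would verify the adjunction $\phi\dashv\psi$ by
\begin{displaymath}
\psi\phi = p_{*}R^{*}(q^{*}q_{*})R_{*}p^{*} = p_{*}R^{*}FR_{*}p^{*} = p_{*}R^{*}R_{*}p^{*}\supseteq p_{*}Ep^{*} = (p_{*}p^{*})(p_{*}p^{*}) = I_{P},
\end{displaymath}
and analogously $\phi\psi\subseteq I_{Q}$. Then \ref{Pos-enriched maps are morphisms} yields a unique $f\colon P\to Q$ with $\phi=f_{*}$, and uniqueness of right adjoints in $\mathrm{Rel}_{w}(\ca{E})$ forces $\psi=f^{*}$.

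Finally, I would check that the two assignments are mutually inverse by a direct computation collapsing compositions via $p_{*}p^{*}=I_{P}$ and $q_{*}q^{*}=I_{Q}$ in one direction and via $FR_{*}E=R_{*}$, $ER^{*}F=R^{*}$ in the other: starting from $f$ and going round produces $q_{*}q^{*}f_{*}p_{*}p^{*}=f_{*}$, while starting from $(R_{*},R^{*})$ and going round produces $q^{*}q_{*}R_{*}p^{*}p_{*}=FR_{*}E=R_{*}$, and symmetrically for $R^{*}$. Order reversal is immediate from the observation recorded earlier in the section that $f\leq g\iff g_{*}\subseteq f_{*}$, together with monotonicity of pre- and post-composition in $\mathrm{Rel}_{w}(\ca{E})$. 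I do not anticipate a serious obstacle here; the only thing to be careful about is keeping track of which compositions cancel via the exact-sequence identities and which use only the congruence properties of $E,F$, but the entire argument is a sequence of routine manipulations in the calculus of relations.
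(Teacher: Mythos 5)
Your proposal is correct and follows essentially the same route as the paper's own proof: the same formulas $R_{*}=q^{*}f_{*}p_{*}$, $R^{*}=p^{*}f^{*}q_{*}$ in one direction and $\phi=q_{*}R_{*}p^{*}$, $\psi=p_{*}R^{*}q^{*}$ in the other, the same cancellations via the exact-sequence identities, and the same appeal to \ref{Pos-enriched maps are morphisms}. The only cosmetic difference is that you isolate $FR_{*}=R_{*}=R_{*}E$ as a preliminary observation (which the paper treats as part of the definition of morphisms in $Q_{w}(\ca{E})$), and you explicitly address the order-reversal, which the paper leaves implicit.
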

\begin{proof}
	Consider first a morphism $r\colon P\to Q\in\ca{E}$. Set $R_{*}\coloneqq q^{*}r_{*}p_{*}$ and $R^{*}\coloneqq p^{*}r^{*}q_{*}$. We have $FR_{*}E=q^{*}q_{*}q^{*}r_{*}p_{*}p^{*}p_{*}=q^{*}r_{*}p_{*}=R_{*}$ and similarly $ER^{*}F=p^{*}p_{*}p^{*}r^{*}q_{*}q^{*}q_{*}=p^{*}r^{*}q_{*}=R^{*}$. In addition, $R^{*}R_{*}=p^{*}r^{*}q_{*}q^{*}r_{*}p_{*}=p^{*}r^{*}I_{Q}r_{*}p_{*}=p^{*}r^{*}r_{*}p_{*}\supseteq p^{*}p_{*}=E$ and $R_{*}R^{*}=q^{*}r_{*}p_{*}p^{*}r^{*}q_{*}\subseteq q^{*}r_{*}r^{*}q_{*}\subseteq q^{*}q_{*}=F$.
	
	Conversely, consider a relation $R_{*}$ as in (2.). Set $\phi\coloneqq q_{*}R_{*}p^{*}$ and then also $\psi\coloneqq p_{*}R^{*}q^{*}$. Then for these weakening-closed relations we have
	\begin{displaymath}
	\psi\phi=p_{*}R^{*}q^{*}q_{*}R_{*}p^{*}\supseteq p_{*}R^{*}R_{*}p^{*}\supseteq p_{*}Ep^{*}=I_{P}
	\end{displaymath} 
	\begin{displaymath}
	\phi\psi=q_{*}R_{*}p^{*}p_{*}R^{*}q^{*}=q_{*}R_{*}ER^{*}q^{*}=q_{*}R_{*}R^{*}q^{*}\subseteq q_{*}Fq^{*}=I_{Q}
	\end{displaymath}
	Thus, by \ref{Pos-enriched maps are morphisms} we have $\phi=r_{*}$ for a (unique) morphism $r\colon P\to Q$.
	
	The fact that these two assignments are inverse to each other is expressed by the two equalities $q_{*}q^{*}r_{*}p_{*}p^{*}=r_{*}$ and $q^{*}q_{*}R_{*}p^{*}p_{*}=FR_{*}E=R_{*}$.
\end{proof}
\vspace{5mm}

Before ending this section, let us make a couple more observations on the assignment $R_{*}\mapsto r$ from the last proposition. Note that, as the notation suggests and the above proof exhibits, the relation $R_{*}$ corresponds to the hypergraph $r_{*}$ of a morphism $r\colon P\to Q$. Specifically, $r$ is uniquely determined from $R_{*}$ by the equality $r_{*}=q_{*}R_{*}p^{*}$. We would like to also record here a relation $R$ that in some sense corresponds directly to the (graph of the) morphism $r$.

Given $R_{*}\colon X\looparrowright Y$ as in \ref{morphisms between quotients of congruences}, set $R\coloneqq R_{*}\cap(R^{*})^{\circ}\colon X\looparrowright Y$. Then we claim that $r=qRp^{\circ}$. To see this, observe first that $qRp^{\circ}$ is also a map in $\mathrm{Rel}(\ca{E})$ because 
\begin{displaymath}
(qRp^{\circ})^{\circ}qRp^{\circ}=pR^{\circ}q^{\circ}qRp^{\circ}\supseteq pR^{\circ}Rp^{\circ}\supseteq p(E\cap E^{\circ})p^{\circ}=\Delta_{P}
\end{displaymath}
\begin{displaymath}
(qRp^{\circ})(qRp^{\circ})^{\circ}=qRp^{\circ}pR^{\circ}q^{\circ}=qR(E\cap E^{\circ})R^{\circ}q^{\circ}=qRR^{\circ}q^{\circ}\subseteq q(F\cap F^{\circ})q^{\circ}=\Delta_{Q}
\end{displaymath}
Here we used the fact that in an exact sequence $\xy\xymatrix{E\ar@<1ex>[r]\ar@<-1ex>[r] & X\ar@{->>}[r]^{p} & P}\endxy$ we have $E\cap E^{\circ}=p^{*}p_{*}\cap(p^{*}p_{*})^{\circ}=p^{\circ}p$, i.e. $E\cap E^{\circ}$ is precisely the kernel pair of $p$. In addition, the inclusions $R^{\circ}R\supseteq E\cap E^{\circ}$ and $RR^{\circ}\subseteq F\cap F^{\circ}$ were used, the first of which is clear and the second follows by the modular law (see the next section for details).

Now, finally, we have 
\begin{displaymath}
r=r_{*}\cap (r^{*})^{\circ}=q_{*}R_{*}p^{*}\cap (q^{*})^{\circ}(R^{*})^{\circ}(p_{*})^{\circ}\supseteq q(R_{*}\cap (R^{*})^{\circ})p^{\circ}=qRp^{\circ}. 
\end{displaymath}
But since we have an inclusion between two maps in the allegory $\mathrm{Rel}(\ca{E})$, these maps must be equal. Hence, we conclude that $r=qRp^{\circ}$.

\vspace{5mm}

\section{Exact Completion}

In this section we come to the heart of this paper, which is the construction of the exact completion of a regular $\nc{Pos}$-category $\ca{E}$. 

The main idea is to try to perform a construction that mimics one of the ways in which one can define the exact completion of an ordinary regular category $\ca{C}$. Let us thus quickly recall this construction, as originally suggested by Lawvere in \cite{Perugia notes}. We will also very much be drawing inspiration from the presentation of Succi-Cruciani\cite{SC}.

Given a regular ordinary category $\ca{C}$, one first performs a \emph{splitting of idempotents} in the bicategory of relations $\mathrm{Rel}(\ca{C})$. More precisely, one splits the class of equivalence relations, which are indeed idempotent as morphisms in $\mathrm{Rel}(\ca{C})$. This step yields a bicategory which, a posteriori, is identified as the bicategory of relations $\mathrm{Rel}(\ca{C}_{ex/reg})$ of the completion. The second step is then to identify the completion itself, which can be done by taking the category of maps in the bicategory produced by the first step. 

We note that the idea for this construction of the exact completion can be traced back to the following two observations, valid in any ordinary regular category $\ca{C}$:
\begin{enumerate}
	\item An equivalence relation $E$ on an object $X\in\ca{C}$ is effective if and only if it splits as an idempotent in $\mathrm{Rel}(\ca{C})$.
	\item The morphisms $f\colon X\to Y\in\ca{C}$ are precisely the maps in $\mathrm{Rel}(\ca{C})$.
\end{enumerate}

Accordingly, our hope to perform a $\nc{Pos}$-enriched version of this construction hinges on the validity of enriched versions of the two observations above, as contained respectively in \ref{effective=splitting} and \ref{Pos-enriched maps are morphisms}. Hence, in our context, we should first look at $\mathrm{Rel}_{w}(\ca{E})$, split the idempotents therein which are congruences in $\ca{E}$, then finally take the category of maps in the resulting bicategory.

The fact that we need to work with $\mathrm{Rel}_{w}(\ca{E})$ rather than $\mathrm{Rel}(\ca{E})$ already presents some issues. As we have mentioned earlier, $\mathrm{Rel}(\ca{E})$ has the structure of an allegory and it is this fact that facilitates many computations. Furthermore, the theory of allegories is well developed and in fact there is a precise correspondence between ordinary regular and exact categories on the one hand and certain classes of allegories on the other (see e.g. \cite{Elephant},\cite{Freyd-Scedrov}). On the contrary, the structure of $\mathrm{Rel}_{w}(\ca{E})$ is not as rich. Fundamentally, the process of taking the opposite of a relation does not restrict to $\mathrm{Rel}_{w}(\ca{E})$. Thus, in our quest to construct the $\nc{Pos}$-enriched exact completion as indicated above, we cannot simply rely on the general theory of allegories. While this creates a complication, at the same time it is in some sense to be expected. Indeed, allegories are in some aspects too simple for our enriched context. For example, the only inclusions between maps in an allegory are equalities and it is precisely this fact that does not allow us to recover the order relation on morphisms from $\mathrm{Rel}(\ca{E})$.

\vspace{5mm}

Motivated by the above, we embark towards our goal by first defining a category $Q_{w}(\ca{E})$ by splitting the idempotents in $\mathrm{Rel}_{w}(\ca{E})$ which are congruences in $\ca{E}$. Explicitly, $Q_{w}(\ca{E})$ is defined as follows:
\begin{itemize}
	\item \underline{Objects} of $Q_{w}(\ca{E})$ are pairs $(X,E)$, where $X$ is an object of $\ca{E}$ and $E\colon X\looparrowright X$ is a congruence relation in $\ca{E}$.
	\item \underline{Morphisms} $\Phi\colon(X,E)\to (Y,F)$ in $Q_{w}(\ca{E})$ are (weakening-closed) relations $\Phi\colon X\looparrowright Y$ in $\ca{E}$ such that $\Phi E=\Phi=F\Phi$ or equivalently $\Phi=F\Phi E$.
\end{itemize}
Composition in $Q_{w}(\ca{E})$ is composition of relations in $\ca{E}$, while the identity morphism on $(X,E)\in Q_{w}(\ca{E})$ is the relation $E$ itself. The morphisms are locally ordered by inclusion and it is clear that $Q_{w}(\ca{E})$ also has binary infima of morphisms given by intersection of relations in $\ca{E}$. Then we define a category $\ca{E}_{ex/reg}$ by taking the maps in $Q_{w}(\ca{E})$.

\begin{definition}
	$\ca{E}_{ex/reg}\coloneqq \mathrm{Map}(Q_{w}(\ca{E}))$.
\end{definition}

Explicitly, $\ca{E}_{ex/reg}$ has the same objects as $Q_{w}(\ca{E})$, while its morphisms are those $R_{*}\colon(X,E)\to (Y,F)\in Q_{w}(\ca{E})$ for which there exists an $R^{*}\colon(Y,F)\to (X,E)\in Q_{w}(\ca{E})$ such that $R^{*}R_{*}\supseteq E$ and $R_{*}R^{*}\subseteq F$.

Note that $\ca{E}_{ex/reg}$ is not merely an ordinary category, but can be made into a legitimate Pos-category by defining for any $R_{*},S_{*}\colon(X,E)\to (Y,F)$ in $\ca{E}_{ex/reg}$
\begin{displaymath}
R_{*}\leq S_{*}\vcentcolon\Leftrightarrow R_{*}\supseteq S_{*}
\end{displaymath} 
the inclusion on the right-hand side being that of relations in $\ca{E}$. This is clearly a partial order relation on Homs that is preserved by composition. Observe furthermore that we can equivalently define the order $R_{*}\leq S_{*}$ by requiring $R^{*}\subseteq S^{*}$ for the right adjoints.

We also have a canonical functor $\Gamma\colon\ca{E}\to\ca{E}_{ex/reg}$ defined by mapping an object $X\in\ca{E}$ to $(X,I_{X})$ and a morphism $f\colon X\to Y\in\ca{E}$ to its hypergraph $f_{*}\colon X\looparrowright Y$ considered as a morphism $(X,I_{X})\to(Y,I_{Y})$. Note that $\Gamma$ is order-preserving and reflecting by definition of the order on morphisms in $\ca{E}_{ex/reg}$.

\begin{remark}
	We will consistently denote morphisms of $\ca{E}_{ex/reg}$ by a capital letter with a lower asterisk and their right adjoint in $Q_{w}(\ca{E})$ by the same letter with an upper asterisk. This notation represents our intuition that $R_{*}$ is the hyper-graph of the morphism $R$ and in some sense we are working towards making this a precise statement. In particular, we will denote the identity morphism $(X,E)\to(X,E)$ by $1_{(X,E)*}$, where as relations in $\mathcal{E}$ we have $1_{(X,E)*}=E$ and $1_{(X,E)}^{*}=E$.
\end{remark}

Our goal now in this section is to show that the category $\ca{E}_{ex/reg}$ as defined above is the \emph{exact completion} of $\ca{E}$ as a regular category. The category $Q_{w}(\ca{E})$ will be seen in the end to be precisely the category of weakening-closed relations in $\ca{E}_{ex/reg}$. Accordingly, the proofs of the various statements about $\ca{E}_{ex/reg}$ later on in this section will be motivated by the description of the various limit and exactness properties in terms of the calculus of relations in a regular category. However, we know that such a description cannot be achieved with only weakening-closed relations. Thus, it will be convenient to construct also at the same time what will turn out to be the bicategory of all relations in $\ca{E}_{ex/reg}$. 

This leads us to define another bicategory $Q(\ca{E})$ as follows:
\begin{itemize}
	\item \underline{Objects} of $Q(\ca{E})$ are again pairs $(X,E)$, where $E\colon X\looparrowright X$ is a congruence relation in $\ca{E}$.
	\item \underline{Morphisms} $(X,E)\to (Y,F)$ in $Q(\ca{E})$ are relations $\Phi\colon X\looparrowright Y$ in $\ca{E}$ such that $\Phi(E\cap E^{\circ})=\Phi=(F\cap F^{\circ})\Phi$ or equivalently $(F\cap F^{\circ})\Phi(E\cap E^{\circ})=\Phi$.
\end{itemize}
The composition of morphisms is that of relations in $\ca{E}$ while the identity on $(X,E)\in Q(\ca{E})$ is $E\cap E^{\circ}$.

In other words, $Q(\ca{E})$ is the locally ordered bicategory obtained from $\mathrm{Rel}(\ca{E})$ by splitting those idempotents of the form $E\cap E^{\circ}$ for a congruence $E$ in $\ca{E}$. Notice that idempotents of this form are equivalence relations in $\ca{E}$ and in particular are symmetric. It then follows (see for example Theorem 3.3.4 in \cite{Elephant}) that $Q(\mathcal{E})$ is an allegory, where the opposite of a morphism is given by taking the opposite relation in $\ca{E}$.
\vspace{5mm}

Now we make some important observations regarding the connection between morphisms of $Q_{w}(\ca{E})$ and $Q(\ca{E})$ and between maps in these two bicategories. If the reader keeps in mind the intuition that these two categories should respectively be $\mathrm{Rel}_{w}(\ca{E}_{ex/reg})$ and $\mathrm{Rel}(\ca{E}_{ex/reg})$, then these observations are to be expected.

First, note that every morphism $\Phi\colon(X,E)\to (Y,F)$ in $Q_{w}(\ca{E})$ can also be considered as a morphism in $Q(\ca{E})$, since
\begin{displaymath}
\Phi=\Delta_{Y}\Phi\Delta_{X}\subseteq (F\cap F^{\circ})\Phi(E\cap E^{\circ})\subseteq F\Phi E=\Phi
\end{displaymath} 
However, it is important to note as well that this assignment is not functorial as it does not preserve the identity morphisms.

Second, to any map in $Q_{w}(\ca{E})$, i.e. to any morphism of $\ca{E}_{ex/reg}$, we can associate in a natural way a map in $Q(\ca{E})$ as follows.
 
\begin{lemma}
Consider any $R_{*}\colon(X,E)\to (Y,F)\in\ca{E}_{ex/reg}$ and define the relation $\mathfrak{gr}(R_{*})\coloneqq R_{*}\cap(R^{*})^{\circ}\colon X\looparrowright Y$ in $\ca{E}$. Then $\mathfrak{gr}(R_{*})$ is a map $(X,E)\to (Y,F)$ in $Q(\ca{E})$.
\end{lemma}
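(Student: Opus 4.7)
The goal is to verify that $R := \mathfrak{gr}(R_*) = R_* \cap (R^*)^\circ$ satisfies the hom-condition $(F \cap F^\circ) R (E \cap E^\circ) = R$ for $Q(\ca{E})$ and admits a right adjoint there. The natural candidate for the adjoint is $R^\circ = R_*^\circ \cap R^*$, obtained by taking opposites and using $(A \cap B)^\circ = A^\circ \cap B^\circ$.

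For the hom-condition, the reverse inclusion is immediate since $E \cap E^\circ \supseteq \Delta_X$ and $F \cap F^\circ \supseteq \Delta_Y$, and composition of relations is monotone. For the forward inclusion I plan to sandwich from two sides: using $R \subseteq R_*$ together with the morphism identity $R_* = F R_* E$ in $Q_w(\ca{E})$, one gets $(F \cap F^\circ) R (E \cap E^\circ) \subseteq F R_* E = R_*$; dually, using $R \subseteq (R^*)^\circ$ and $R^* = E R^* F$, one gets $(F \cap F^\circ) R (E \cap E^\circ) \subseteq F^\circ (R^*)^\circ E^\circ = (E R^* F)^\circ = (R^*)^\circ$. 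Intersecting these two bounds places the left-hand side inside $R$.

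The counit $R R^\circ \subseteq F \cap F^\circ$ is handled by the same sandwich trick: from $R \subseteq R_*$, $R^\circ \subseteq R^*$ and the counit of $R_* \dashv R^*$ in $Q_w(\ca{E})$ one has $R R^\circ \subseteq R_* R^* \subseteq F$, while the dual inclusions yield $R R^\circ \subseteq (R^*)^\circ R_*^\circ = (R_* R^*)^\circ \subseteq F^\circ$.

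The main obstacle is the unit inclusion $R^\circ R \supseteq E \cap E^\circ$, since intersections of relations interact poorly with composition in the allegory $\mathrm{Rel}(\ca{E})$, so one cannot simply distribute. I intend to reason with generalized elements. Given $(x_1, x_2) \in_A E \cap E^\circ$, the inclusion $R^* R_* \supseteq E$ together with the characterization of relational composition via effective epimorphic covers yields some $e \colon B \twoheadrightarrow A$ and $y \colon B \to Y$ with $(x_1 e, y) \in_B R_*$ and $(y, x_2 e) \in_B R^*$. The key point is then to exploit the weakening-closure identities $R_* E = R_*$ and $E R^* = R^*$: combined with the pulled-back inequality $(x_2 e, x_1 e) \in_B E$ (coming from $(x_2, x_1) \in_A E$), these upgrade the previous memberships to $(x_2 e, y) \in_B R_*$ and $(y, x_1 e) \in_B R^*$. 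Thus both $(x_1 e, y)$ and $(x_2 e, y)$ lie in $R_* \cap (R^*)^\circ = R$, so $(x_1 e, x_2 e) \in_B R^\circ R$; since $e$ is a cover this descends to $(x_1, x_2) \in_A R^\circ R$, completing the proof.
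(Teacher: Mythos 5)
Your proposal is correct. The first two thirds of it (the hom-condition $(F\cap F^{\circ})R(E\cap E^{\circ})=R$ via the two-sided sandwich through $FR_{*}E=R_{*}$ and $(ER^{*}F)^{\circ}=(R^{*})^{\circ}$, and the counit $RR^{\circ}\subseteq F\cap F^{\circ}$ via $RR^{\circ}\subseteq R_{*}R^{*}\cap(R^{*})^{\circ}R_{*}^{\circ}$) coincide with the paper's computation essentially line for line. Where you genuinely diverge is the unit inclusion $R^{\circ}R\supseteq E\cap E^{\circ}$: the paper stays entirely inside the allegory $\mathrm{Rel}(\ca{E})$ and extracts the inclusion from two successive applications of the modular law (\ref{Modular Law}) and (\ref{Modular Law*}), whereas you run an element chase using the lemma characterizing relational composition via effective epimorphic covers, the descent of membership along $\nc{so}$-morphisms, and the weakening identities $R_{*}E=R_{*}$, $ER^{*}=R^{*}$ to produce, over a cover $e\colon B\twoheadrightarrow A$, a common witness $y$ with both $(x_{1}e,y)$ and $(x_{2}e,y)$ in $R_{*}\cap(R^{*})^{\circ}$. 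Both arguments are complete; yours is more concrete and makes visible \emph{why} the inclusion holds (the symmetric part of $E$ lets the single witness serve both coordinates), while the paper's is shorter once one is fluent with the modular law and has the advantage of being a purely equational allegory calculation of the kind reused throughout Section 4. One small point worth making explicit in a written version: when you "upgrade" $(x_{1}e,y)\in_{B}R_{*}$ to $(x_{2}e,y)\in_{B}R_{*}$ you are invoking the converse direction of the composition lemma with the identity cover, which is legitimate but deserves a word.
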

\begin{proof}
First, it is easy to see that $(F\cap F^{\circ})\mathfrak{gr}(R_{*})(E\cap E^{\circ})=\mathfrak{gr}(R_{*})$. Indeed, we have
\begin{displaymath}
\mathfrak{gr}(R_{*})=\Delta_{Y}\mathfrak{gr}(R_{*})\Delta_{X}\subseteq (F\cap F^{\circ})\mathfrak{gr}(R_{*})(E\cap E^{\circ})\subseteq (F\mathfrak{gr}(R_{*})E)\cap (F^{\circ}\mathfrak{gr}(R_{*})E^{\circ}) 
\end{displaymath}
\begin{displaymath}
\subseteq FR_{*}E\cap F^{\circ}(R^{*})^{\circ}E^{\circ}=R_{*}\cap(R^{*})^{\circ}=\mathfrak{gr}(R_{*})
\end{displaymath}

Second, to see that $\mathfrak{gr}(R_{*})$ is indeed a map in $Q(\ca{E})$ we argue as follows:
\begin{displaymath}
\mathfrak{gr}(R_{*})\mathfrak{gr}(R_{*})^{\circ}=(R_{*}\cap(R^{*})^{\circ})((R_{*})^{\circ}\cap R^{*})\subseteq R_{*}R^{*}\cap (R^{*})^{\circ}(R_{*})^{\circ}\subseteq F\cap F^{\circ}
\end{displaymath}
\begin{eqnarray*}
	\mathfrak{gr}(R_{*})^{\circ}\mathfrak{gr}(R_{*}) & = & (R_{*}^{\circ}\cap R^{*})(R_{*}\cap(R^{*})^{\circ})=(R_{*}^{\circ}\cap R^{*})((R_{*}\cap(R^{*})^{\circ})\cap R_{*}) \\
	& = & (R_{*}^{\circ}\cap R^{*})((R_{*}^{\circ}\cap R^{*})^{\circ}(E\cap E^{\circ})\cap R_{*}) \\
	& \stackrel{\ref{Modular Law}}{\supseteq} & (E\cap E^{\circ})\cap (R_{*}^{\circ}\cap R^{*})R_{*}=(E\cap E^{\circ})\cap (E^{\circ}R_{*}^{\circ}\cap R^{*})R_{*} \\
	& \stackrel{\ref{Modular Law*}}{\supseteq} & (E\cap E^{\circ})\cap E^{\circ}\cap R^{*}R_{*}\supseteq (E\cap E^{\circ})\cap E^{\circ}\cap E \\
	& = & E\cap E^{\circ}
\end{eqnarray*}	
where for establishing the first and second inclusions we used the modular law in $\mathrm{Rel}(\ca{E})$.
\end{proof}

Given a morphism $R_{*}\colon(X,E)\to (Y,F)\in\ca{E}_{ex/reg}$, we call the relation $\mathfrak{gr}(R_{*})$ defined above the \emph{graph} of $R_{*}$. Observe furthermore that $\mathfrak{gr}(R_{*})$ satisfies the following two basic equalities: 
\begin{displaymath}
F\circ\mathfrak{gr}(R_{*})=R_{*} 
\end{displaymath}	
\begin{displaymath}
\mathfrak{gr}(R_{*})^{\circ}\circ F=R^{*}
\end{displaymath}
Indeed, on one hand clearly $F\circ\mathfrak{gr}(R_{*})\subseteq FR_{*}=R_{*}$. On the other hand we have
\begin{align*}
F\circ\mathfrak{gr}(R_{*})& \supseteq R_{*}R^{*}\mathfrak{gr}(R_{*})=R_{*}R^{*}(R_{*}\cap (R^{*})^{\circ})\\
 &\stackrel{\ref{Modular Law}}{\supseteq} R_{*}(R^{*}R_{*}\cap E^{\circ})\supseteq R_{*}(E\cap E^{\circ})\supseteq R_{*}
\end{align*}
The second equality follows in a similar fashion. 

In fact, these two equalities characterize $\mathfrak{gr}(R_{*})$ in the following sense: if the morphism $\Phi\colon(X,E)\to (Y,F)$ is a map in $Q(\ca{E})$ with $F\Phi=R_{*}$ and $\Phi^{\circ}F=R^{*}$, then $\Phi=\mathfrak{gr}(R_{*})$. Indeed,
\begin{displaymath} 
\Phi\subseteq (F\cap F^{\circ})\Phi\subseteq F\Phi\cap F^{\circ}\Phi=F\Phi\cap(\Phi^{\circ}F)^{\circ}=R_{*}\cap(R^{*})^{\circ}=\mathfrak{gr}(R_{*})
\end{displaymath}
and so $\Phi=\mathfrak{gr}(R_{*})$ because $Q(\ca{E})$ is an allegory and hence the inclusion of maps is discrete.

Finally, the assignment $R_{*}\mapsto \mathfrak{gr}(R_{*})$ is functorial. Given $R_{*}\colon(X,E)\to (Y,F)$ and $S_{*}\colon(Y,F)\to(Z,G)$ we have 
\begin{displaymath}
G(\mathfrak{gr}(S_{*})\mathfrak{gr}(R_{*}))=(G\mathfrak{gr}(S_{*}))\mathfrak{gr}(R_{*})=S_{*}\mathfrak{gr}(R_{*})=S_{*}F\mathfrak{gr}(R_{*})=S_{*}R_{*} 
\end{displaymath}
\begin{displaymath}
(\mathfrak{gr}(S_{*})\mathfrak{gr}(R_{*}))^{\circ}G=\mathfrak{gr}(R_{*})^{\circ}\mathfrak{gr}(S_{*})^{\circ}G=\mathfrak{gr}(R_{*})^{\circ}S^{*}=\mathfrak{gr}(R_{*})^{\circ}FS^{*}=R^{*}S^{*}, 
\end{displaymath}
so we conclude by the above observation that $\mathfrak{gr}(S_{*})\mathfrak{gr}(R_{*})=\mathfrak{gr}(S_{*}R_{*})$. Also, $\mathfrak{gr}(1_{(X,E)*})=1_{(X,E)*}\cap (1_{(X,E)}^{*})^{\circ}=E\cap E^{\circ}$ and the latter is an identity morphism in $\mathrm{Map}(Q(\ca{E}))$.

\begin{remark}(on notation)
	Henceforth, to ease the notation, we shall often denote the graph $\mathfrak{gr}(R_{*})$ of a morphism $R_{*}\colon(X,E)\to (Y,E)\in\ca{E}_{ex/reg}$ simply by $R$, i.e. we will just drop the lower asterisk. This shall not present too much risk for confusion as $R_{*}$ will always have appeared before $R$.
\end{remark}
\vspace{3mm}

We now begin our work on proving that $\ca{E}_{ex/reg}$ is indeed the desired exact completion of the regular category $\ca{E}$. This is broken down into a sequence of more bite-sized pieces. First, we establish a fundamental result asserting the existence of certain canonical representations for morphisms in the bicategories $Q_{w}(\ca{E})$ and $Q(\ca{E})$. Following this, we repeatedly employ this representation to establish step by step that $\ca{E}_{ex/reg}$ has the desired finite limit and exactness properties making it an exact category. The arguments here are essentially motivated by the description of these properties in terms of the calculus of relations. Finally, we show that we have indeed constructed the exact completion by establishing the relevant universal property.

To begin with, we record a small lemma concerning jointly order-monomorphic pairs of morphisms in $\ca{E}_{ex/reg}$.

\begin{lemma}
	If $\xy\xymatrix{(Y,F) & (X,E)\ar[l]_{R_{*}}\ar[r]^{S_{*}} & (Z,G)}\endxy$ is a pair of morphisms in the category $\ca{E}_{ex/reg}$ such that $R^{*}R_{*}\cap S^{*}S_{*}=E$, then this pair is jointly order-monomorphic in $\ca{E}_{ex/reg}$.
\end{lemma}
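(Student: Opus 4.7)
The plan is to work with the right adjoints $\Phi^{*},\Psi^{*}$ in $Q_{w}(\ca{E})$ and to exploit the equivalent characterization of the order in $\ca{E}_{ex/reg}$, namely $\Phi_{*}\leq\Psi_{*}$ iff $\Phi^{*}\subseteq\Psi^{*}$ as relations in $\ca{E}$. Given a test pair $\Phi_{*},\Psi_{*}\colon (W,D)\to(X,E)$ in $\ca{E}_{ex/reg}$ with $R_{*}\Phi_{*}\leq R_{*}\Psi_{*}$ and $S_{*}\Phi_{*}\leq S_{*}\Psi_{*}$, the goal reduces to deriving the single inclusion $\Phi^{*}\subseteq\Psi^{*}$.

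First, I would translate the hypotheses using the fact that right adjoints compose contravariantly, namely $(R_{*}\Phi_{*})^{*}=\Phi^{*}R^{*}$ and analogously for $S$. The assumptions then read $\Phi^{*}R^{*}\subseteq\Psi^{*}R^{*}$ and $\Phi^{*}S^{*}\subseteq\Psi^{*}S^{*}$ as relations in $\ca{E}$. Post-composing each of these on the right with $R_{*}$ and $S_{*}$ respectively yields
\begin{displaymath}
\Phi^{*}R^{*}R_{*}\subseteq\Psi^{*}R^{*}R_{*}\quad\text{and}\quad\Phi^{*}S^{*}S_{*}\subseteq\Psi^{*}S^{*}S_{*}.
\end{displaymath}

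The key step is to invoke the adjunction $\Phi_{*}\dashv\Phi^{*}$ in the bicategory $Q_{w}(\ca{E})$ to conclude that post-composition with $\Phi^{*}$ preserves binary meets in the Hom-poset of $Q_{w}(\ca{E})$, because right adjoints preserve meets. Combined with the hypothesis $R^{*}R_{*}\cap S^{*}S_{*}=E$ and the fact that $E$ is the identity on $(X,E)$ in $Q_{w}(\ca{E})$ (so $\Phi^{*}E=\Phi^{*}$), this gives
\begin{displaymath}
\Phi^{*}=\Phi^{*}E=\Phi^{*}\bigl(R^{*}R_{*}\cap S^{*}S_{*}\bigr)=\Phi^{*}R^{*}R_{*}\cap\Phi^{*}S^{*}S_{*},
\end{displaymath}
and analogously $\Psi^{*}=\Psi^{*}R^{*}R_{*}\cap\Psi^{*}S^{*}S_{*}$. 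Intersecting the two inclusions above then yields $\Phi^{*}\subseteq\Psi^{*}$, i.e.\ $\Phi_{*}\leq\Psi_{*}$ as required.

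The main thing to verify carefully is the meet-preservation of $\Phi^{*}$ on the relevant Hom-poset. This is a standard fact about right adjoints in locally posetal bicategories: since $\Phi_{*}\dashv\Phi^{*}$ in $Q_{w}(\ca{E})$, one has an induced Galois connection between $\mathrm{Hom}_{Q_{w}(\ca{E})}((X,E),(W,D))$ and $\mathrm{Hom}_{Q_{w}(\ca{E})}((X,E),(X,E))$ via post-composition, in which post-composition with $\Phi^{*}$ is the right adjoint; hence it preserves binary meets, which in these Hom-posets are just intersections of relations in $\ca{E}$. Beyond this point, the argument is a short calculation, with the only real bookkeeping being the reversal between the order on $\ca{E}_{ex/reg}$ and inclusion of relations.
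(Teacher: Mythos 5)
Your proof is correct, but it routes the argument differently from the paper. The paper's proof takes test morphisms $H_{*},K_{*}$ with $R_{*}H_{*}\leq R_{*}K_{*}$ and $S_{*}H_{*}\leq S_{*}K_{*}$ and computes
\begin{displaymath}
K_{*}H^{*}=(R^{*}R_{*}\cap S^{*}S_{*})K_{*}H^{*}\subseteq R^{*}R_{*}K_{*}H^{*}\cap S^{*}S_{*}K_{*}H^{*}\subseteq R^{*}R_{*}H_{*}H^{*}\cap S^{*}S_{*}H_{*}H^{*}\subseteq E,
\end{displaymath}
then converts $K_{*}H^{*}\subseteq E$ into $K_{*}\subseteq H_{*}$ via the adjunction $H_{*}\dashv H^{*}$; note that this only ever uses the \emph{sub}-distributivity inclusion $(A\cap B)C\subseteq AC\cap BC$, which holds unconditionally. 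You instead pass entirely to the right adjoints and need the genuine \emph{equality} $\Psi^{*}(R^{*}R_{*}\cap S^{*}S_{*})=\Psi^{*}R^{*}R_{*}\cap\Psi^{*}S^{*}S_{*}$, which you correctly identify as the delicate point and correctly justify: post-composition with $\Psi^{*}$ is the upper adjoint of the Galois connection induced by $\Psi_{*}\dashv\Psi^{*}$ between the relevant Hom-posets of $Q_{w}(\ca{E})$, and binary meets there are intersections of relations (as the paper notes when defining $Q_{w}(\ca{E})$), so they are preserved. Both arguments are short; the paper's is marginally more economical in that it avoids invoking meet-preservation, whereas yours makes the structural reason for the lemma (right adjoints preserve the meet appearing in the hypothesis) more transparent. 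One small remark: as written you only actually need meet-preservation for $\Psi^{*}$, since for $\Phi^{*}$ the trivial inclusion $\Phi^{*}E\subseteq\Phi^{*}R^{*}R_{*}\cap\Phi^{*}S^{*}S_{*}$ already suffices.
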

\begin{proof}
	Assume that $R^{*}R_{*}\cap S^{*}S_{*}=E$ and let $H_{*},K_{*}\colon(A,T)\to (X,E)\in\ca{E}_{ex/reg}$ be such that $R_{*}H_{*}\leq R_{*}K_{*}$ and $S_{*}H_{*}\leq S_{*}K_{*}$ i.e. $R_{*}H_{*}\supseteq R_{*}K_{*}$ and $S_{*}H_{*}\supseteq S_{*}K_{*}$. Then we have
	\begin{eqnarray*}
		K_{*}H^{*}& = & EK_{*}H^{*}=(R^{*}R_{*}\cap S^{*}S_{*})K_{*}H^{*}\subseteq R^{*}R_{*}K_{*}H^{*}\cap S^{*}S_{*}K_{*}H^{*}\\
		& \subseteq & R^{*}R_{*}H_{*}H^{*}\cap S^{*}S_{*}H_{*}H^{*}\subseteq R^{*}R_{*}\cap S^{*}S_{*},\\
	\end{eqnarray*}
	hence $K_{*}H^{*}\subseteq E$. But recall that by definition of morphisms we have an adjunction $H_{*}\dashv H^{*}$ in $Q_{w}(\ca{E})$. Thus, $K_{*}H^{*}\subseteq E\iff K_{*}\subseteq H_{*}$, so that we obtain $H_{*}\leq K_{*}$.
\end{proof}

The result that follows will be of central importance in establishing all the desired properties of $\ca{E}_{ex/reg}$ throughout the remainder of this section. It says that any morphism of $Q(\ca{E})$ (hence also of $Q_{w}(\ca{E})$) can be expressed in a suitable way via morphisms of $\ca{E}_{ex/reg}$. This should be compared to the fact that in any regular category $\ca{C}$ every relation $R\colon X\looparrowright Y$ can be written as $R=gf^{\circ}$, where $f\colon Z\to X$ and $g\colon Z\to Y$ are morphisms in $\ca{C}$ with $f^{*}f_{*}\cap g^{*}g_{*}=I_{Z}$. Actually, our goal is to show in the end that it is \emph{precisely} this fact, since we will prove that $Q(\ca{E})$ is exactly the bicategory of relations of $\ca{E}_{ex/reg}$, while $Q_{w}(\ca{E})$ will be that of weakening-closed relations.

\begin{proposition}\label{Tabulations Existence}
	Let $\Phi\colon(X,E)\to (Y,F)$ be a morphism of $Q(\ca{E})$. Then there exists a pair of morphisms $\xy\xymatrix{(X,E) & (Z,T)\ar[l]_{R_{0*}}\ar[r]^{R_{1*}} & (Y,F)}\endxy$ in $\ca{E}_{ex/reg}$ such that 
	\begin{enumerate}
		\item $\Phi=R_{1}R_{0}^{\circ}$.
		\item $R_{0}^{*}R_{0*}\cap R_{1}^{*}R_{1*}=T$.
	\end{enumerate}	
	Moreover, any pair $(R_{0*},R_{1*})$ with these two properties has the following universal property: 
	
	Given any morphisms	$\xy\xymatrix{(X,E) & (C,G)\ar[l]_{S_{0*}}\ar[r]^{S_{1*}} & (Y,F)}\endxy$ in $\ca{E}_{ex/reg}$ such that $S_{1}S_{0}^{\circ}\subseteq\Phi$, there exists a unique morphism $H_{*}\colon(C,G)\to (Z,T)\in\ca{E}_{ex/reg}$ with $R_{0*}H_{*}=S_{0*}$ and $R_{1*}H_{*}=S_{1*}$.	
	\begin{proof}
		Suppose that $\Phi$ is represented by the $\nc{ff}$-morphism $\langle r_{0},r_{1}\rangle\colon Z\rightarrowtail X\times Y$ in $\ca{E}$. We set $T\coloneqq r_{0}^{\circ}Er_{0}\cap r_{1}^{\circ}Fr_{1}$ and $R_{0*}\coloneqq Er_{0}$, $R_{1*}\coloneqq Fr_{1}$. The relations $r_{0}^{\circ}Er_{0}$ and $r_{1}^{\circ}Fr_{1}$ are inverse images along $r_{0},r_{1}$ respectively of the congruences $E,F$, hence are themselves congruences. Thus, so is their intersection $T$.
		
		Also, we claim that that $R_{0*}$ and $R_{1*}$ as defined above are morphisms\newline 	$\xy\xymatrix{(X,E) & (Z,T)\ar[l]_{R_{0*}}\ar[r]^{R_{1*}} & (Y,F)}\endxy$ in $\ca{E}_{ex/reg}$. Let's check this for $R_{0*}$: first, we have $ER_{0*}=EEr_{0}=Er_{0}=R_{0*}$. Furthermore, 
		\begin{displaymath}
		R_{0*}T=Er_{0}(r_{0}^{\circ}Er_{0}\cap r_{1}^{\circ}Fr_{1})\subseteq Er_{0}r_{0}^{\circ}Er_{0}\subseteq E\Delta_{X}Er_{0}=EEr_{0}=Er_{0}=R_{0*},
		\end{displaymath} 
		hence $R_{0*}T=R_{0*}$. So $R_{0*}$ is at least a morphism in $Q_{w}(\ca{E})$. To show that it  is actually a map, define $R_{0}^{*}\coloneqq r_{0}^{\circ}E$. We then similarly have 
		\begin{displaymath}
		TR_{0}^{*}=(r_{0}^{\circ}Er_{0}\cap r_{1}^{\circ}Fr_{1})r_{0}^{\circ}E\subseteq r_{0}^{\circ}Er_{0}r_{0}^{\circ}E\subseteq r_{0}^{\circ}EE=r_{0}^{\circ}E=R_{0}^{*}\implies TR_{0}^{*}=R_{0}^{*}
		\end{displaymath}
		and $R_{0}^{*}E=r_{0}^{\circ}EE=r_{0}^{\circ}E=R_{0}^{*}$. And finally, $R_{0}^{*}R_{0*}=r_{0}^{\circ}EEr_{0}=r_{0}^{\circ}Er_{0}\supseteq T$ and $R_{0*}R_{0}^{*}=Er_{0}r_{0}^{\circ}E\subseteq EE=E$.
		
		Similarly, it follows that $R_{1*}$ is a morphism $(Z,T)\to (Y,F)\in\ca{E}_{ex/reg}$ whose right adjoint in $Q_{w}(\ca{E})$ is $R_{1}^{*}\coloneqq r_{1}^{\circ}F$.
		
		Just by the definitions, we have 
		\begin{displaymath}
		R_{0}^{*}R_{0*}\cap R_{1}^{*}R_{1*}=r_{0}^{\circ}EEr_{0}\cap r_{1}^{\circ}FFr_{1}=r_{0}^{\circ}Er_{0}\cap r_{1}^{\circ}Fr_{1}=T.
		\end{displaymath}
		In addition, $R_{0}=R_{0*}\cap (R_{0}^{*})^{\circ}=Er_{0}\cap(r_{0}^{\circ}E)^{\circ}=Er_{0}\cap E^{\circ}r_{0}\stackrel{\ref{Map distributivity}}{=}(E\cap E^{\circ})r_{0}$ and similarly $R_{1}=(F\cap F^{\circ})r_{1}$, so that
		\begin{displaymath}
		R_{1}R_{0}^{\circ}=(F\cap F^{\circ})r_{1}r_{0}^{\circ}(E\cap E^{\circ})=(F\cap F^{\circ})\Phi(E\cap E^{\circ})=\Phi.
		\end{displaymath} 
		where the last equality holds because $\Phi\in Q(\ca{E})$.
		\vspace{2mm}
		
		Next, we have to prove the stated universality property, so let\newline $\xy\xymatrix{(X,E) & (C,G)\ar[l]_{S_{0*}}\ar[r]^{S_{1*}} & (Y,F)}\endxy$ in $\ca{E}_{ex/reg}$ be such that $S_{1}S_{0}^{\circ}\subseteq\Phi$. 
		
		Set $H_{*}\coloneqq R_{0}^{*}S_{0*}\cap R_{1}^{*}S_{1*}$ and $H^{*}\coloneqq S_{0}^{*}R_{0*}\cap S_{1}^{*}R_{1*}$. Since, both $H_{*}$, $H^{*}$ are binary intersections of compositions of morphisms in $Q_{w}(\ca{E})$, it is immediate that they are both themselves morphisms in that bicategory. We will show that $H_{*}$ is moreover a map with $H^{*}$ as its right adjoint.
		
		First of all, we have that
		\begin{displaymath}
		H_{*}H^{*}\subseteq R_{0}^{*}S_{0*}S_{0}^{*}R_{0*}\cap R_{1}^{*}S_{1*}S_{1}^{*}R_{1*}\subseteq R_{0}^{*}ER_{0*}\cap R_{1}^{*}FR_{1*}=R_{0}^{*}R_{0*}\cap R_{1}^{*}R_{1*}=T
		\end{displaymath}
		For the other inclusion we argue as follows:
		\begin{eqnarray*}
			H^{*}H_{*} & = & (S_{0}^{*}R_{0*}\cap S_{1}^{*}R_{1*})(R_{0}^{*}S_{0*}\cap R_{1}^{*}S_{1*})\supseteq (S_{0}^{\circ}R_{0}\cap S_{1}^{\circ}R_{1})(R_{0}^{\circ}S_{0}\cap R_{1}^{\circ}S_{1}) \\
			& = & (S_{0}^{\circ}R_{0}\cap S_{1}^{\circ}R_{1})((R_{0}^{\circ}S_{0}\cap R_{1}^{\circ}S_{1})(G\cap G^{\circ})\cap R_{0}^{\circ}S_{0}) \\
			& \stackrel{\ref{Modular Law}}{\supseteq} & (G\cap G^{\circ})\cap (S_{0}^{\circ}R_{0}\cap S_{1}^{\circ}R_{1})R_{0}^{\circ}S_{0} \\
			& = & (G\cap G^{\circ})\cap ((G\cap G^{\circ})S_{0}^{\circ}R_{0}\cap S_{1}^{\circ}R_{1})R_{0}^{\circ}S_{0} \\
			& \stackrel{\ref{Modular Law*}}{\supseteq} & (G\cap G^{\circ})\cap (G\cap G^{\circ})\cap S_{1}^{\circ}R_{1}R_{0}^{\circ}S_{0}
		\end{eqnarray*}	
		But now, using adjunction properties in $Q(\mathcal{E})$ together with the assumption that $S_{1}S_{0}^{\circ}\subseteq\Phi$, we observe that 
		\begin{displaymath}
		S_{1}S_{0}^{\circ}\subseteq\Phi=R_{1}R_{0}^{\circ}\implies S_{1}\subseteq R_{1}R_{0}^{\circ}S_{0}\implies G\cap G^{\circ}\subseteq S_{1}^{\circ}R_{1}R_{0}^{\circ}S_{0},
		\end{displaymath}
		from which we deduce $H^{*}H_{*}\supseteq (G\cap G^{\circ})\cap S_{1}^{\circ}R_{1}R_{0}^{\circ}S_{0}=G\cap G^{\circ}$. Then, finally, $H^{*}H_{*}G\supseteq (G\cap G^{\circ})G$, which implies $H^{*}H_{*}\supseteq G$.
		
		Thus, $H_{*}$ is indeed a morphism in $\ca{E}_{ex/reg}$. Furthermore,
		\begin{displaymath}
		R_{0*}H_{*}=R_{0*}(R_{0}^{*}S_{0*}\cap R_{1}^{*}S_{1*})\subseteq R_{0*}R_{0}^{*}S_{0*}\subseteq ES_{0*}=S_{0*}
		\end{displaymath}
		\begin{displaymath}
		R_{0*}H_{*}=R_{0*}(R_{0}^{*}S_{0*}\cap R_{1}^{*}S_{1*})\supseteq R_{0}(R_{0}^{\circ}S_{0}\cap R_{1}^{\circ}S_{1})\stackrel{\ref{Modular Law}}{\supseteq} S_{0}\cap R_{0}R_{1}^{\circ}S_{1}=S_{0}
		\end{displaymath}
		where for the last inclusion we made use of the fact that $S_{1}S_{0}^{\circ}\subseteq\Phi=R_{1}R_{0}^{\circ}$ implies $S_{0}\subseteq R_{0}R_{1}^{\circ}S_{1}$ by the adjunction property in $Q(\ca{E})$. 
		Now we have $ER_{0*}H_{*}\supseteq ES_{0}$, which implies $R_{0*}H_{*}\supseteq S_{0*}$. Thus, we deduce that $R_{0*}H_{*}=S_{0*}$ and similarly one obtains $R_{1*}H_{*}=S_{1*}$. Finally, uniqueness is clear because, as we've proved in the previous lemma, the equality $R_{0}^{*}R_{0*}\cap R_{1}^{*}R_{1*}=T$ implies that $R_{0*},R_{1*}$ are jointly order-monomorphic in $\ca{E}_{ex/reg}$.
	\end{proof}
	
\end{proposition}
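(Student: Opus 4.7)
The guiding intuition is that $\ca{E}_{ex/reg}$ should behave like a regular category in which $(X,E)$ represents the quotient of $X$ by the congruence $E$, and that any relation in a regular category admits a tabulation by a jointly order-monomorphic pair of maps. Since $\Phi$ already comes tabulated in $\ca{E}$ itself, as the $\nc{ff}$-subobject $\langle r_0, r_1\rangle\colon Z \rightarrowtail X \times Y$ representing it, my plan is to upgrade this to a tabulation in $\ca{E}_{ex/reg}$ by equipping $Z$ with a suitable congruence $T$ making the two projections into maps to $(X,E)$ and $(Y,F)$. The natural candidate is $T \coloneqq r_0^{\circ} E r_0 \cap r_1^{\circ} F r_1$, which is a congruence since it is the intersection of two inverse-image congruences.

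I would then set $R_{0*} \coloneqq E r_0$, $R_{1*} \coloneqq F r_1$ with right-adjoint candidates $R_0^* \coloneqq r_0^{\circ} E$, $R_1^* \coloneqq r_1^{\circ} F$, and verify by routine manipulations of relational composition (using reflexivity and transitivity of the congruences involved) that these are maps in $\ca{E}_{ex/reg}$. Condition (2) of the statement holds by the very definition of $T$. For condition (1), I would compute the graphs via the map-distributivity law \ref{Map distributivity} to obtain $R_0 = (E \cap E^{\circ}) r_0$ and $R_1 = (F \cap F^{\circ}) r_1$, whence $R_1 R_0^{\circ} = (F \cap F^{\circ}) \Phi (E \cap E^{\circ}) = \Phi$, the last equality holding because $\Phi$ is a morphism of $Q(\ca{E})$.

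For the universal property, given $(S_{0*}, S_{1*})$ with $S_1 S_0^{\circ} \subseteq \Phi$, the obvious ``pairing'' suggested by the tabulation is
\begin{displaymath}
H_* \coloneqq R_0^* S_{0*} \cap R_1^* S_{1*}, \qquad H^* \coloneqq S_0^* R_{0*} \cap S_1^* R_{1*}.
\end{displaymath}
These are evidently morphisms in $Q_w(\ca{E})$, and establishing the adjunction $H_* \dashv H^*$ reduces to two inequalities. The inclusion $H_* H^* \subseteq T$ is straightforward, bounding the intersection above and then using the adjunctions $S_{i*} \dashv S_i^*$ to reach $R_0^* E R_{0*} \cap R_1^* F R_{1*} = T$. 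The main obstacle, as I see it, is establishing the reverse inequality $H^* H_* \supseteq G$: my plan is to pass first through the graphs $R_i, S_i$, isolate a factor of $G \cap G^{\circ}$ by one application of the modular law \ref{Modular Law}, apply \ref{Modular Law*} once more, and then invoke the hypothesis in the equivalent form $G \cap G^{\circ} \subseteq S_1^{\circ} R_1 R_0^{\circ} S_0$, which follows from $S_1 S_0^{\circ} \subseteq R_1 R_0^{\circ}$ by the adjunction property in $Q(\ca{E})$.

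Finally, the equalities $R_{i*} H_* = S_{i*}$ would follow from two-sided inclusions: the ``$\subseteq$'' direction uses the adjunction $R_{i*} \dashv R_i^*$ directly, while the ``$\supseteq$'' direction uses the modular law together with the reformulation $S_0 \subseteq R_0 R_1^{\circ} S_1$ of the hypothesis. Uniqueness of $H_*$ is then immediate from the preceding lemma, since condition (2) makes $(R_{0*}, R_{1*})$ jointly order-monomorphic in $\ca{E}_{ex/reg}$.
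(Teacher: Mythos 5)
Your proposal is correct and follows essentially the same route as the paper's proof: the same congruence $T=r_{0}^{\circ}Er_{0}\cap r_{1}^{\circ}Fr_{1}$, the same legs $R_{0*}=Er_{0}$, $R_{1*}=Fr_{1}$ with the same right adjoints, the same computation of the graphs via \ref{Map distributivity} to get condition (1), the same pairing $H_{*}=R_{0}^{*}S_{0*}\cap R_{1}^{*}S_{1*}$, and the same double application of the modular law to obtain $H^{*}H_{*}\supseteq G$. Nothing essential is missing.
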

\vspace{5mm}

A pair of morphisms $(R_{0*},R_{1*})$ in $\ca{E}_{ex/reg}$ with the properties in \ref{Tabulations Existence} will be called a \emph{tabulation} of $\Phi\colon(X,E)\to (Y,F)\in Q(\ca{E})$. This terminology is borrowed from the theory of allegories. Note incidentally that the latter theory could have been directly applied to $Q(\ca{E})$, but this approach would not work for us. The reason for that is that we can not identify the morphisms of the would be completion $\ca{E}_{ex/reg}$ as maps in this allegory, but rather only in $Q_{w}(\ca{E})$. Thus, we need both of these bicategories at the same time: $Q_{w}(\ca{E})$ to identify the morphisms of $\ca{E}_{ex/reg}$ and $Q(\ca{E})$ to express the existence of tabulations and perform calculations more freely.

Furthermore, since as we've noted earlier every morphism $\Phi\colon(X,E)\to (Y,F)\in Q_{w}(\ca{E})$ can be considered as a morphism in $Q(\ca{E})$, we also have tabulations for morphisms of $Q_{w}(\ca{E})$. In this case the inclusion $S_{1}S_{0}^{\circ}\subseteq\Phi$ in the universal property of tabulations is equivalent to $S_{1*}S_{0}^{*}\subseteq\Phi$. Indeed, $S_{1}S_{0}^{\circ}\subseteq\Phi$ implies $FS_{1}S_{0}^{\circ}E\subseteq F\Phi E$, which is to say $S_{1*}S_{0}^{*}\subseteq\Phi$. Similarly, for the tabulation $(R_{0*},R_{1*})$ we have $\Phi=R_{1}R_{0}^{\circ}=R_{1*}R_{0}^{*}$.

As we've already claimed, the existence of tabulations will be a fundamental tool for establishing results about $\ca{E}_{ex/reg}$. As a first example of this, we can now completely characterize what it means for a pair of morphisms to be jointly order-monomorphic in $\ca{E}_{ex/reg}$.

\begin{corollary}
	A pair of morphisms $\xy\xymatrix{(Y,F) & (X,E)\ar[l]_{R_{*}}\ar[r]^{S_{*}} & (Z,G)}\endxy$ is jointly order-monomorphic in $\ca{E}_{ex/reg}$ if and only if $R^{*}R_{*}\cap S^{*}S_{*}=E$.
\end{corollary}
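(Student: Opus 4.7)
The converse direction, that $R^{*}R_{*} \cap S^{*}S_{*} = E$ implies joint order-monomorphicity, was already proved in the lemma stated immediately before Proposition \ref{Tabulations Existence}, so the content of the corollary lies entirely in the forward direction. Setting $T := R^{*}R_{*} \cap S^{*}S_{*}$, the inclusion $E \subseteq T$ follows at once from the units $E \subseteq R^{*}R_{*}$ and $E \subseteq S^{*}S_{*}$ of the two adjunctions in $Q_{w}(\ca{E})$; the real task is to prove $T \subseteq E$ under the hypothesis that $(R_{*}, S_{*})$ is jointly order-monomorphic.

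My plan is to regard $T$ itself as a morphism of $Q_{w}(\ca{E})$, tabulate it, play the resulting two ``projections'' off against the hypothesis, and then read the conclusion back as an inclusion of relations. First I verify that $T$ is a morphism $(X,E) \to (X,E)$ in $Q_{w}(\ca{E})$: weakening-closedness is preserved by intersection, the identities $ET = T = TE$ follow from $ER^{*} = R^{*}$, $R_{*}E = R_{*}$ and their symmetric versions, and transitivity $TT \subseteq T$ follows from the counits $R_{*}R^{*} \subseteq F$ and $S_{*}S^{*} \subseteq G$ together with the order-preservation of composition. Proposition \ref{Tabulations Existence} then yields a tabulation $(T_{0*}, T_{1*})\colon (X,E) \leftarrow (W,N) \to (X,E)$ in $\ca{E}_{ex/reg}$ with $T = T_{1*}T_{0}^{*}$.

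Composing on the left with $R_{*}$ and using the counit of $R_{*}\dashv R^{*}$ gives
\begin{displaymath}
R_{*}T_{1*}T_{0}^{*} \;=\; R_{*}T \;\subseteq\; R_{*}R^{*}R_{*} \;\subseteq\; FR_{*} \;=\; R_{*},
\end{displaymath}
and transposing across the adjunction $T_{0*}\dashv T_{0}^{*}$ produces $R_{*}T_{1*} \subseteq R_{*}T_{0*}$; the symmetric argument yields $S_{*}T_{1*} \subseteq S_{*}T_{0*}$. Converted into the $\ca{E}_{ex/reg}$-order these inclusions read $R_{*}T_{0*} \leq R_{*}T_{1*}$ and $S_{*}T_{0*} \leq S_{*}T_{1*}$, so joint order-monomorphicity delivers $T_{0*} \leq T_{1*}$, equivalently $T_{0*} \supseteq T_{1*}$ as relations. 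Composing once more with $T_{0}^{*}$ on the right,
\begin{displaymath}
T \;=\; T_{1*}T_{0}^{*} \;\subseteq\; T_{0*}T_{0}^{*} \;\subseteq\; E,
\end{displaymath}
the last inclusion being the counit of $T_{0*}\dashv T_{0}^{*}$. Combined with $E \subseteq T$ this yields $T = E$.

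The one point requiring genuine care is the bookkeeping between the two opposite order conventions in play: inclusion of relations is the local order of $Q_{w}(\ca{E})$ but its \emph{reverse} is the order on $\ca{E}_{ex/reg}$-morphisms. It is essential that the adjunction transpose $R_{*}T_{1*} \subseteq R_{*}T_{0*}$ be read as $R_{*}T_{0*} \leq R_{*}T_{1*}$ before invoking the hypothesis, for otherwise joint order-monomorphicity would produce the opposite and useless inclusion $T_{1*} \supseteq T_{0*}$, from which no bound on $T$ by $E$ could be extracted.
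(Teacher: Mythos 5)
Your proof is correct and follows essentially the same route as the paper's: tabulate $R^{*}R_{*}\cap S^{*}S_{*}$ as a morphism of $Q_{w}(\ca{E})$, use the adjunctions to get $R_{*}T_{1*}\subseteq R_{*}T_{0*}$ and $S_{*}T_{1*}\subseteq S_{*}T_{0*}$, invoke joint order-monomorphicity to get $T_{1*}\subseteq T_{0*}$, and compose with $T_{0}^{*}$ to land in $E$. Your extra verification that $T$ is a morphism of $Q_{w}(\ca{E})$ (the transitivity $TT\subseteq T$ is not actually needed for that) and your care with the two opposite order conventions are both sound.
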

\begin{proof}
	We've already proven sufficiency earlier, so assume conversely that $R_{*},S_{*}$ are jointly order-monomorphic. By the proposition above, there exists a tabulation \\ $\xy\xymatrix{(X,E) & (A,T)\ar[l]_{U_{*}}\ar[r]^{V_{*}} & (X,E)}\endxy$ for the morphism $R^{*}R_{*}\cap S^{*}S_{*}\in Q_{w}(\mathcal{E})$. Then we have
	\begin{displaymath}
	V_{*}U^{*}=R^{*}R_{*}\cap S^{*}S_{*}\subseteq R^{*}R_{*}\implies R_{*}V_{*}U^{*}\subseteq R_{*}\implies R_{*}V_{*}\subseteq R_{*}U_{*}
	\end{displaymath}
	and similarly we obtain $S_{*}V_{*}\subseteq S_{*}U_{*}$. Thus, we have $R_{*}V_{*}\geq R_{*}U_{*}$ and $S_{*}V_{*}\geq S_{*}U_{*}$ and hence $V_{*}\geq U_{*}$, which is to say that $V_{*}\subseteq U_{*}$. But now we have
	\begin{displaymath}
	R^{*}R_{*}\cap S^{*}S_{*}=V_{*}U^{*}\subseteq U_{*}U^{*}\subseteq E
	\end{displaymath}
	Since the reverse inclusion always holds, we conclude that $R^{*}R_{*}\cap S^{*}S_{*}=E$.
\end{proof}

Before beginning to prove the basic finite limit and exactness properties of $\ca{E}_{ex/reg}$ we will need some information on $\nc{ff}$ and $\nc{so}$-morphisms therein.

\begin{lemma}\label{ff,so,isos in ex/reg}
	Let $R_{*}\colon(X,E)\to (Y,F)$ be a morphism in $\mathcal{E}_{ex/reg}$. Then:
	\begin{enumerate}
		\item $R_{*}$ is an $\nc{ff}$-morphism in $\ca{E}_{ex/reg}$ if and only if $R^{*}R_{*}=E$.
		\item $R_{*}$ is an iso if and only if $R^{*}R_{*}=E$ and $R_{*}R^{*}=F$.
		\item If $R_{*}R^{*}=F$, then $R_{*}$ is an $\nc{so}$-morphism in $\ca{E}_{ex/reg}$.
	\end{enumerate}
\end{lemma}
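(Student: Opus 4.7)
The plan is to prove the three parts in order, since (1) will serve as a lemma in (3). All arguments are short manipulations in the relation calculus of $Q_w(\ca{E})$, using the defining inequalities $R^{*}R_{*}\supseteq E$ and $R_{*}R^{*}\subseteq F$ for a map, and in one place the tabulation of Proposition \ref{Tabulations Existence}. One should keep in mind that the $\ca{E}_{ex/reg}$-order on morphisms is the \emph{reverse} of inclusion of relations in $\ca{E}$.

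For the forward direction of (1), assume $R^{*}R_{*}=E$ and $R_{*}H_{*}\le R_{*}K_{*}$, i.e.\ $R_{*}H_{*}\supseteq R_{*}K_{*}$. Precomposing with $R^{*}$ yields $EH_{*}\supseteq EK_{*}$, i.e.\ $H_{*}\supseteq K_{*}$, that is $H_{*}\le K_{*}$. For the converse, suppose $R_{*}$ is $\nc{ff}$ and tabulate the $Q_w$-morphism $R^{*}R_{*}\colon(X,E)\to(X,E)$ by a jointly order-monomorphic pair of maps $U_{*},V_{*}\colon(A,T)\to(X,E)$ with $V_{*}U^{*}=R^{*}R_{*}$. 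The inclusions $R^{*}R_{*}\supseteq E$ and $R_{*}R^{*}\subseteq F$ force the identity $R_{*}R^{*}R_{*}=R_{*}$, so $R_{*}V_{*}U^{*}=R_{*}$, and hence $R_{*}V_{*}\subseteq R_{*}V_{*}U^{*}U_{*}=R_{*}U_{*}$ (using $V_{*}=V_{*}T\subseteq V_{*}U^{*}U_{*}$). Translated through the order convention this reads $R_{*}U_{*}\le R_{*}V_{*}$, and the $\nc{ff}$ hypothesis on $R_{*}$ yields $U_{*}\le V_{*}$, i.e.\ $V_{*}\subseteq U_{*}$. Therefore $R^{*}R_{*}=V_{*}U^{*}\subseteq U_{*}U^{*}\subseteq E$, which combined with $R^{*}R_{*}\supseteq E$ gives equality.

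Part (2) reduces to uniqueness of right adjoints in the locally posetal bicategory $Q_w(\ca{E})$. If both equalities hold, they are exactly the unit/counit inequalities for the reverse adjunction $R^{*}\dashv R_{*}$, making $R^{*}$ a map; the identities then display $R_{*}$ and $R^{*}$ as mutually inverse isomorphisms in $\ca{E}_{ex/reg}$. Conversely, an inverse $R_{*}^{-1}$ of $R_{*}$ in $\ca{E}_{ex/reg}$ satisfies $R_{*}^{-1}R_{*}=E$ and $R_{*}R_{*}^{-1}=F$, so it is itself a right adjoint of $R_{*}$ in $Q_w(\ca{E})$; uniqueness of right adjoints gives $R^{*}=R_{*}^{-1}$, whence the two required equations.

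For (3), assume $R_{*}R^{*}=F$, and consider a square $M_{*}S_{*}=T_{*}R_{*}$ in $\ca{E}_{ex/reg}$ with $M_{*}\colon(Z,H)\to(W,K)$ an $\nc{ff}$-morphism. I propose the diagonal $D_{*}\coloneqq S_{*}R^{*}$ with right-adjoint candidate $D^{*}\coloneqq R_{*}S^{*}$; both are routinely morphisms of $Q_w(\ca{E})$. The inclusion $D^{*}D_{*}\supseteq F$ reads $R_{*}S^{*}S_{*}R^{*}\supseteq R_{*}ER^{*}=R_{*}R^{*}=F$; for $D_{*}D^{*}\subseteq H$ the direct computation
\begin{displaymath}
M_{*}D_{*}D^{*}=T_{*}R_{*}R^{*}R_{*}S^{*}=T_{*}R_{*}S^{*}=M_{*}S_{*}S^{*}\subseteq M_{*}H
\end{displaymath}
combined with $M^{*}M_{*}=H$ (from part (1), since $M_{*}$ is $\nc{ff}$) yields $HD_{*}D^{*}\subseteq H$ and hence $D_{*}D^{*}\subseteq H$. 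Thus $D_{*}$ is a map, i.e.\ a morphism of $\ca{E}_{ex/reg}$. Then $M_{*}D_{*}=T_{*}R_{*}R^{*}=T_{*}F=T_{*}$, while $D_{*}R_{*}=S_{*}R^{*}R_{*}\supseteq S_{*}E=S_{*}$ gives $D_{*}R_{*}\le S_{*}$; combined with $M_{*}D_{*}R_{*}=T_{*}R_{*}=M_{*}S_{*}$ and the $\nc{ff}$ property of $M_{*}$ (now applicable since $D_{*}R_{*}$ is a morphism of $\ca{E}_{ex/reg}$) we obtain $D_{*}R_{*}=S_{*}$. Uniqueness of the diagonal is immediate: any $D'_{*}$ with $D'_{*}R_{*}=S_{*}$ satisfies $D'_{*}=D'_{*}F=D'_{*}R_{*}R^{*}=S_{*}R^{*}=D_{*}$. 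The main subtlety is precisely that in the step $D_{*}D^{*}\subseteq H$ one cannot cancel $M_{*}$ from $M_{*}D_{*}D^{*}\subseteq M_{*}H$ by invoking the $\nc{ff}$ property directly, because $D_{*}D^{*}$ is a priori only a morphism of $Q_w(\ca{E})$ and not yet known to be a map; the characterization from part (1) is what makes the argument go through.
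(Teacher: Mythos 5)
Your proof is correct and follows essentially the same route as the paper: part (1) is the single-morphism case of the paper's corollary on jointly order-monomorphic pairs (same tabulation argument), part (2) is the paper's ``clear'' step spelled out via uniqueness of right adjoints, and in part (3) your diagonal $D_{*}=S_{*}R^{*}$ is exactly the paper's $P_{*}=V_{*}R^{*}$, with the verification of $D_{*}D^{*}\subseteq H$ reorganized slightly (you cancel $M_{*}$ via $M^{*}M_{*}=H$ at the end, where the paper first rewrites the diagonal as $M^{*}S_{*}$) but resting on the same use of part (1).
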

\begin{proof}
	\begin{enumerate}
		\item This follows immediately from the previous corollary.
		
		\item Clear.
		
		\item Consider a commutative square in $\ca{E}_{ex/reg}$ as below, where $M_{*}$ is an $\nc{ff}$-morphism.
		\begin{center}
			\hfil
			\xy\xymatrix{(X,E)\ar[r]^{R_{*}}\ar[d]_{V_{*}} & (Y,F)\ar[d]^{S_{*}} \\
				(Z,G)\ar@{>->}[r]_{M_{*}} & (W,H)}\endxy
			\hfil 
		\end{center}
		By (1.), we know that $M^{*}M_{*}=G$. Set $P_{*}\coloneqq V_{*}R^{*}$. First, we claim that $P_{*}$ is a morphism $(Y,F)\to(Z,G)$ in $\ca{E}_{ex/reg}$ with $P^{*}=R_{*}V^{*}$. Indeed, we have $P^{*}P_{*}=R_{*}V^{*}V_{*}R^{*}\supseteq R_{*}R^{*}=F$. Observe also that $P_{*}=M^{*}S_{*}$, because $M_{*}V_{*}=S_{*}R_{*}\implies M^{*}M_{*}V_{*}=M^{*}S_{*}R_{*}\implies V_{*}=M^{*}S_{*}R_{*}\implies V_{*}R^{*}=M^{*}S_{*}R_{*}R^{*}=M^{*}S_{*}$. Then we can argue that $P_{*}P^{*}=M^{*}S_{*}R_{*}V^{*}=M^{*}M_{*}V_{*}V^{*}=V_{*}V^{*}\subseteq G$.
		
		Finally, clearly $P_{*}R_{*}=M^{*}S_{*}R_{*}=M^{*}M_{*}V_{*}=V_{*}$ and also $M_{*}P_{*}=M_{*}V_{*}R^{*}=S_{*}R_{*}R^{*}=S_{*}$.
	\end{enumerate}
\end{proof}

\begin{proposition}\label{finite limits in ex/reg}
	$\ca{E}_{ex/reg}$ has finite limits and $\Gamma\colon\ca{E}\to\ca{E}_{ex/reg}$ preserves them.
\end{proposition}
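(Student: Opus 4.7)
The strategy is to invoke Proposition \ref{finite limits=finite products+inserters}: it suffices to construct in $\ca{E}_{ex/reg}$ a terminal object, binary products and inserters, and to verify that $\Gamma$ preserves each of these. Throughout, the central tool will be Proposition \ref{Tabulations Existence}, which converts limit constructions in $\ca{E}_{ex/reg}$ into concrete data in $\ca{E}$.

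For the terminal object I would take $(1,I_{1})$ with $1$ terminal in $\ca{E}$; the hypergraph of the unique $!\colon X\to 1$ yields the required map $(X,E)\to (1,I_{1})$, and uniqueness follows because the only map $(1,I_{1})\to (1,I_{1})$ in $Q_{w}(\ca{E})$ is the identity (there being only one nonempty relation on $1$). For binary products, given $(X,E)$ and $(Y,F)$, I would set $(X,E)\times (Y,F)\coloneqq (X\times Y,G)$ with congruence $G\coloneqq \pi_{X}^{*}E\pi_{X*}\cap \pi_{Y}^{*}F\pi_{Y*}$ and projections $\Pi_{X*}\coloneqq E\pi_{X*}$, $\Pi_{Y*}\coloneqq F\pi_{Y*}$ (each directly checked to be a map with right adjoint $\pi_{X}^{*}E$, resp. $\pi_{Y}^{*}F$). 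The crucial observation is that $(\Pi_{X*},\Pi_{Y*})$ is exactly the tabulation of the top relation $\top\colon(X,E)\to(Y,F)\in Q(\ca{E})$ represented by $1_{X\times Y}$; the product universal property then falls out of the tabulation universal property of \ref{Tabulations Existence}, since the hypothesis $S_{1}S_{0}^{\circ}\subseteq\top$ holds vacuously for any pair.

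For inserters of $R_{*},S_{*}\colon(X,E)\to(Y,F)$, the key reduction is that, using the adjunctions $R_{*}\dashv R^{*}$ and $S_{*}\dashv S^{*}$ in $Q_{w}(\ca{E})$, the inequality $R_{*}M_{*}\supseteq S_{*}M_{*}$ is equivalent to $\Psi M_{*}\supseteq M_{*}$, where $\Psi\coloneqq S^{*}R_{*}\colon(X,E)\to(X,E)$ is a weakening-closed relation. Representing $\Psi$ by an $\nc{ff}$-morphism $\langle\psi_{0},\psi_{1}\rangle\colon V\rightarrowtail X\times X$ in $\ca{E}$, form the equalizer $e\colon W\rightarrowtail V$ of $\psi_{0},\psi_{1}$, set $m\coloneqq \psi_{0}e=\psi_{1}e$, and define the inserter to be $(W,\tilde T)$ with $\tilde T\coloneqq m^{*}Em_{*}$ and $M_{*}\coloneqq Em_{*}$. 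That $R_{*}M_{*}\supseteq S_{*}M_{*}$ reduces, via weakening-closedness of $\Psi$, to the equality $\psi_{0}e=\psi_{1}e$, which guarantees $(m(w),m(w))\in\Psi$ and hence $(m(w),x)\in\Psi$ for every $x$ with $(m(w),x)\in E$.

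Preservation by $\Gamma$ is then an unwinding: for the terminal, $\Gamma(1)=(1,I_{1})$ by definition; for products, in the case $E=I_{X}$, $F=I_{Y}$ the congruence $G$ collapses to $\pi_{X}^{*}\pi_{X*}\cap \pi_{Y}^{*}\pi_{Y*}=I_{X\times Y}$ by joint order-monicity of the projections in $\ca{E}$; for inserters, $\Psi=g^{*}f_{*}$ is represented by the comma object $f/g$, whose equalizer-of-projections is exactly the $\ca{E}$-inserter of $f,g$. The main obstacle will be the universal property of the inserter: given any $K_{*}\colon(C,H)\to (X,E)$ with $R_{*}K_{*}\supseteq S_{*}K_{*}$, one must produce a (necessarily unique, as $M_{*}$ is an $\nc{ff}$-morphism by Lemma \ref{ff,so,isos in ex/reg}(1) since $M^{*}M_{*}=\tilde T$) factorization through $M_{*}$. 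This will require tabulating $K_{*}$ in $\ca{E}$, exploiting the fixed-point-like condition $\Psi K_{*}\supseteq K_{*}$ to land each generalized element of the tabulation in $V$, and finally invoking the universal property of the equalizer in $\ca{E}$ to land in $W$—a delicate interplay between the two calculi of relations $Q_{w}(\ca{E})$ and $Q(\ca{E})$.
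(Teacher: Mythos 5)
Your overall strategy coincides with the paper's: reduce to a terminal object, binary products and inserters via \ref{finite limits=finite products+inserters}, and lean on \ref{Tabulations Existence}. Your terminal object and product are exactly the paper's (the product is the tabulation of the maximal relation $X\looparrowright Y$, with the same congruence and the same projections $E\pi_{X}$, $F\pi_{Y}$). Where you genuinely diverge is the inserter. The paper tabulates the relation $S^{*}R_{*}\cap(E\cap E^{\circ})\colon(X,E)\to(X,E)$ in $Q(\ca{E})$, shows the two legs of the tabulation coincide, and then reads the inserter's universal property directly off that of the tabulation via the chain $R_{*}H_{*}\leq S_{*}H_{*}\iff HH^{\circ}\subseteq S^{*}R_{*}\cap E\cap E^{\circ}$. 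You instead build the object by hand as the ``diagonal part'' of $\Psi=S^{*}R_{*}$, namely the equalizer of the two legs of an $\nc{ff}$-morphism representing $\Psi$. The two constructions yield isomorphic objects (if $(x,x')\in\Psi\cap E\cap E^{\circ}$ then $E\Psi=\Psi=\Psi E$ forces $(x,x)\in\Psi$ and $(x',x')\in\Psi$, so the paper's tabulation legs factor through your $W$); your route avoids the ``two legs coincide'' argument, at the price of having to verify the universal property from scratch rather than inheriting it from \ref{Tabulations Existence}.

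One step of your plan needs repair. For the universal property you propose to use the fixed-point condition $K_{*}\subseteq\Psi K_{*}$ to land generalized elements in $V$ and then invoke the equalizer. But that condition only produces, for $(c,x)\in K$, a $y$ (after an $\nc{so}$-cover) with $(c,y)\in K$ and $(y,x)\in\Psi$ --- an element of $V$ whose two legs need not be equal, so the universal property of the equalizer does not apply to it. The correct move is to first convert $K_{*}\subseteq\Psi K_{*}$, via the adjunction $K_{*}\dashv K^{*}$, into $K_{*}K^{*}\subseteq\Psi$ and hence $KK^{\circ}\subseteq\Psi$ for the graph $K$; then for any $(c,x)\in_{A}K$ one has $(x,x)\in_{A}KK^{\circ}\subseteq\Psi$ with \emph{equal} legs, so $x$ factors through $m\colon W\rightarrowtail X$, giving $K\subseteq mm^{\circ}K$ and thence $M_{*}(M^{*}K_{*})=K_{*}$. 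With that adjustment the argument goes through. (A minor point: for the terminal object the relevant uniqueness is that of maps $(X,E)\to(1,I_{1})$, which follows because the unit inequality $\Psi\Phi\supseteq E\supseteq\Delta_{X}$ forces $\Phi$ to be the maximal relation $X\looparrowright 1$; counting endorelations of $1$ is not quite the right justification.)
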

\begin{proof}
	Let's first construct the inserter of a pair $\xy\xymatrix{(X,E)\ar@<1ex>[r]^{R_{*}}\ar@<-1ex>[r]_{S_{*}} & (Y,F)}\endxy$. To this end, consider a tabulation $\xy\xymatrix{(X,E) & (A,T)\ar[l]_{\Phi_{0*}}\ar[r]^{\Phi_{1*}} & (X,E)}\endxy$ of $S^{*}R_{*}\cap (E\cap E^{\circ})$ as a morphism $(X,E)\to(X,E)$ in $Q(\ca{E})$. Then we  observe that
	\begin{eqnarray*}
		\Phi_{1} & = & \Phi_{1}(T\cap T^{\circ})=\Phi_{1}(\Phi_{0}^{\circ}\Phi_{0}\cap\Phi_{1}^{\circ}\Phi_{1})\subseteq\Phi_{1}\Phi_{0}^{\circ}\Phi_{0}=(S^{*}R_{*}\cap E\cap E^{\circ})\Phi_{0} \\
		& \subseteq & (E\cap E^{\circ})\Phi_{0}=\Phi_{0}
	\end{eqnarray*}	
	and hence $\Phi_{1}=\Phi_{0}$ because the inclusion of maps in $Q(\ca{E})$ is discrete. So we have $\Phi_{1*}=\Phi_{0*}$, which we henceforth denote simply by $\Phi_{*}$. To prove that $\Phi_{*}\colon(A,T)\to (X,E)$ is the inserter of $(R_{*},S_{*})$ it now suffices, due to the universal property of tabulations, to show that for every $H_{*}\colon(Z,G)\to (X,E)$ we have $R_{*}H_{*}\leq S_{*}H_{*}$ if and only if $HH^{\circ}\subseteq S^{*}R_{*}\cap E\cap E^{\circ}$. Indeed, we have
	\begin{eqnarray*}
		R_{*}H_{*}\leq S_{*}H_{*} & \iff &  S_{*}H_{*}\subseteq R_{*}H_{*}\iff H_{*}\subseteq S^{*}R_{*}H_{*}\iff H_{*}H^{*}\subseteq S^{*}R_{*}\\
		& \iff & HH^{\circ}\subseteq S^{*}R_{*}\iff HH^{\circ}\subseteq S^{*}R_{*}\cap E\cap E^{\circ}
	\end{eqnarray*}
	
	Next, let us construct the product of a pair of objects $(X,E)$ and $(Y,F)$ in $\ca{E}_{ex/reg}$. Observe that the maximal relation $X\looparrowright Y$ given by the product is clearly a morphism $(X,E)\to (Y,F)$ in $Q(\ca{E})$. Therefore, by \ref{Tabulations Existence} it has a tabulation. In fact, looking back at the proof of the latter proposition we can easily see that the tabulation thus constructed is given by the pair of morphisms $\begin{tikzcd}[column sep=.75in](X,E) & (X\times Y,E\times F)\ar[l,"\Pi_{(X,E)*}"']\ar[r,"\Pi_{(Y,F)*}"] & (Y,F)\end{tikzcd}$, where $\Pi_{(X,E)*}=E\pi_{X}$ and $\Pi_{(Y,F)*}=F\pi_{Y}$. In any case, the universal property of tabulations gives precisely the universal property of a product diagram in $\ca{E}_{ex/reg}$.
	
	Finally, it is easy to check that $\Gamma 1$ is a terminal object for $\ca{E}_{ex/reg}$. 
\end{proof}

Even though the above proposition tells us that $\ca{E}_{ex/reg}$ inherits all finite weighted limits from $\ca{E}$, we shall need some more specific information as well, namely on the construction of comma squares and pullbacks.

Consider any morphisms $R_{*}\colon(X,E)\to(Z,G)$ and $S_{*}\colon(Y,F)\to(Z,G)$ in $\ca{E}_{ex/reg}$. First, we construct the comma square below:
\begin{center}
	\hfil
	\xy\xymatrix{(W,T)\ar[d]_{P_{0*}}\ar[r]^{P_{1*}}\ar@{}[dr]|{\leq} & (Y,F)\ar[d]^{S_{*}} \\
		(X,E)\ar[r]_{R_{*}} & (Z,G)}\endxy
	\hfil 
\end{center}
For this, we take $(P_{0*},P_{1*})$ to be a tabulation of $S^{*}R_{*}\colon(X,E)\to(Y,F)\in Q_{w}(\ca{E})$. To prove that this square is indeed a comma, it suffices to prove that, given any $U_{*}\colon(A,H)\to(X,E)$ and $V_{*}\colon(A,H)\to(Y,F)$, we have $V_{*}U^{*}\subseteq S^{*}R_{*}$ if and only if $R_{*}U_{*}\leq S_{*}V_{*}$. But indeed, using properties of adjunctions we have equivalences
\begin{displaymath}
R_{*}U_{*}\leq S_{*}V_{*}\iff S_{*}V_{*}\subseteq R_{*}U_{*}\iff V_{*}\subseteq S^{*}R_{*}U_{*}\iff V_{*}U^{*}\subseteq S^{*}R_{*}
\end{displaymath}

For pullbacks let us take now $(P_{0*},P_{1*})$ to be a tabulation of the relation $S^{\circ}R\in Q(\mathcal{E})$. Then we claim that the following square is a pullback.
\begin{center}
	\hfil
	\xy\xymatrix{(W,T)\ar[d]_{P_{0*}}\ar[r]^{P_{1*}} & (Y,F)\ar[d]^{S_{*}} \\
		(X,E)\ar[r]_{R_{*}} & (Z,G)}\endxy
	\hfil 
\end{center}
Indeed, in this case we have for any $U_{*}\colon(A,H)\to(X,E)$ and $V_{*}\colon(A,H)\to(Y,F)$ that
$R_{*}U_{*}=S_{*}V_{*}$ if and only if $RU=SV$, which in the allegory $Q(\ca{E})$ is equivalent to the inclusion $SV\subseteq RU$. By adjunction conditions, the latter is in turn equivalent to $SVU^{\circ}\subseteq R$ and then to $VU^{\circ}\subseteq S^{\circ}R$. Thus, the universal property of the pullback is identified with that of the tabulation.

Next, we prove that $\ca{E}_{ex/reg}$ admits the required factorization system.

\begin{proposition}\label{factorizations in ex/reg}
	$\ca{E}_{ex/reg}$ has ($\nc{so}$,$\nc{ff}$)-factorizations.
\end{proposition}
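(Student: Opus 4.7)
The plan is, given a morphism $R_{*}\colon(X,E)\to(Y,F)$ in $\ca{E}_{ex/reg}$ with right adjoint $R^{*}$ in $Q_{w}(\ca{E})$, to produce a factorization by interpolating the new object $(X,T)$, where $T\coloneqq R^{*}R_{*}\colon X\looparrowright X$. This is the analogue of the standard image factorization in a regular category, where a morphism $f\colon X\to Y$ is factored via the quotient of its kernel congruence $f/f = f^{*}f_{*}$.

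First I would check that $T$ is a congruence on $X$. Weakening-closedness is automatic since $R_{*}$ and $R^{*}$ are weakening-closed, and a composition of such relations is again weakening-closed. Reflexivity is the adjunction inequality $T = R^{*}R_{*}\supseteq E\supseteq I_{X}$. Transitivity follows from
\begin{displaymath}
TT = R^{*}R_{*}R^{*}R_{*}\subseteq R^{*}FR_{*} = R^{*}R_{*} = T,
\end{displaymath}
using $R_{*}R^{*}\subseteq F$ and $FR_{*}=R_{*}$. Since moreover $T\supseteq E$, the relation $T$ defines a legitimate morphism of $Q_{w}(\ca{E})$ both from $(X,E)$ to $(X,T)$ and from $(X,T)$ to $(X,E)$.

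Next I would propose the factorization $(X,E)\xrightarrow{q_{*}}(X,T)\xrightarrow{m_{*}}(Y,F)$, where $q_{*}$ is the relation $T$ with right adjoint $q^{*}\coloneqq T$, and $m_{*}$ is the relation $R_{*}$ with right adjoint $m^{*}\coloneqq R^{*}$. The verifications that these are genuine morphisms of $\ca{E}_{ex/reg}$ are short calculations from the defining identities $ER_{*}=R_{*}E=R_{*}$, $FR^{*}=R^{*}F=R^{*}$, $R_{*}R^{*}\subseteq F$, $R^{*}R_{*}\supseteq E$, and the idempotence of $T$. The key points are that $q_{*}q^{*}=T\cdot T = T$, which is the identity on $(X,T)$, so \ref{ff,so,isos in ex/reg}(3) yields that $q_{*}$ is an $\nc{so}$-morphism, while $m^{*}m_{*}=R^{*}R_{*}=T$, so \ref{ff,so,isos in ex/reg}(1) yields that $m_{*}$ is an $\nc{ff}$-morphism.

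Finally, to confirm this factors $R_{*}$, I would compute $m_{*}q_{*} = R_{*}\cdot T = R_{*}R^{*}R_{*}$, and use the sandwich $R_{*}E\subseteq R_{*}R^{*}R_{*}\subseteq FR_{*}$, i.e.\ $R_{*}\subseteq R_{*}R^{*}R_{*}\subseteq R_{*}$, to conclude $m_{*}q_{*}=R_{*}$. I do not anticipate any serious obstacle here — the structure of the proof is dictated by the calculus-of-relations description — but the one thing that requires patience is bookkeeping: at each stage one must check that the relation in question has the correct congruence acting as identity on the left and on the right to qualify as a morphism in the appropriate Hom of $Q_{w}(\ca{E})$, and this is precisely where the congruence axioms for $T$ and the adjunction conditions on $(R_{*},R^{*})$ enter.
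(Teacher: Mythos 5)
Your proof is correct, but it takes a genuinely different route from the paper's. The paper constructs the intermediate object as a \emph{subobject} of the codomain: it forms the relation $RR^{\circ}\in Q(\ca{E})$ (where $R=\mathfrak{gr}(R_{*})$), takes a tabulation $\xy\xymatrix{(Y,F) & (Z,G)\ar[l]_{S_{0*}}\ar[r]^{S_{1*}} & (Y,F)}\endxy$, shows $S_{0*}=S_{1*}=\vcentcolon S_{*}$ is $\nc{ff}$ via the tabulation identity $S^{*}S_{*}=G$, and then verifies $Q_{*}Q^{*}=G$ for the induced comparison $Q_{*}$ by a short computation with $QQ^{\circ}$. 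You instead build the intermediate object as a \emph{quotient} of the domain, namely $(X,T)$ with $T=R^{*}R_{*}$ the ``kernel congruence'' of $R_{*}$ --- exactly the other standard description of the image in a regular category --- and all your verifications (that $T$ is a congruence, that $q_{*}=T$ and $m_{*}=R_{*}$ are legitimate maps with the stated adjoints, that $q_{*}q^{*}=T$ and $m^{*}m_{*}=T$, and that $m_{*}q_{*}=R_{*}$) check out against the defining identities. Your argument is purely equational and avoids tabulations and the allegory $Q(\ca{E})$ entirely, which makes it shorter and more self-contained; the paper's version, by exhibiting the image as a subobject of $(Y,F)$ tabulating $RR^{\circ}$, keeps the graph $R$ in play, which it then reuses in the remark and corollary immediately following (the equivalence $R_{*}R^{*}=F\iff RR^{\circ}=F\cap F^{\circ}$ characterizing $\nc{so}$-morphisms). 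Note that those subsequent results only need \emph{some} factorization whose epi part $Q_{*}$ satisfies $Q_{*}Q^{*}=G$, which your $q_{*}$ also provides, so nothing downstream would break.
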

\begin{proof}
	Consider a morphism $R_{*}\colon(X,E)\to (Y,F)\in\ca{E}_{ex/reg}$. Then $RR^{\circ}\in Q(\ca{E})$ and so it admits a tabulation $\xy\xymatrix{(Y,F) & (Z,G)\ar[l]_{S_{0*}}\ar[r]^{S_{1*}} & (Y,F)}\endxy$. Since tautologically $R_{*}$ is such that $RR^{\circ}\subseteq S_{1}S^{\circ}_{0}$, there exists a unique $Q_{*}\colon(X,E)\to (Z,G)\in\ca{E}_{ex/reg}$ such that $S_{0*}Q_{*}=R_{*}=S_{1*}Q_{*}$.
	
	Now observe that in $Q(\ca{E})$ we have $S_{1}\subseteq S_{1}S_{0}^{\circ}S_{0}=RR^{\circ}S_{0}\subseteq S_{0}$ and so we deduce that $S_{1}=S_{0}$ and hence $S_{1*}=S_{0*}$. We denote this morphism now simply by $S_{*}$. Then we have $S^{*}S_{*}=G$ by the tabulation property and this tells us that $S_{*}$ is an $\nc{ff}$-morphism. It suffices now to show that $Q_{*}Q^{*}=G$, so that $Q_{*}$ will be an $\nc{so}$-morphism. For this we argue as follows:
	\begin{eqnarray*}
		SQQ^{\circ}S^{\circ} & = & RR^{\circ}=SS^{\circ}\implies \\
		S^{\circ}SQQ^{\circ}S^{\circ}S & = & S^{\circ}SS^{\circ}S \implies \\
		(G\cap G^{\circ})QQ^{\circ}(G\cap G^{\circ}) & = & G\cap G^{\circ}\implies \\
		QQ^{\circ} & = & G\cap G^{\circ} 
	\end{eqnarray*}
	Then $Q_{*}Q^{*}=GQQ^{\circ}G=G(G\cap G^{\circ})G=G$.
\end{proof}
\vspace{3mm}

\begin{remark}
	For a morphism $R_{*}\colon(X,E)\to (Y,F)\in\ca{E}_{ex/reg}$ we have $R_{*}R^{*}=F$ if and only if $RR^{\circ}=F\cap F^{\circ}$. We showed the ``if'' direction in the course of the above proof. For the converse, assume that $R_{*}R^{*}=F$ and argue as follows:
	\begin{align*}
		RR^{\circ} & =R(R^{\circ}(F\cap F^{\circ})\cap R^{*})\stackrel{\ref{Modular Law}}{\supseteq} (F\cap F^{\circ})\cap RR^{*}=(F\cap F^{\circ})\cap(R_{*}\cap (R^{*})^{\circ})R^{*}\\
		& \stackrel{\ref{Modular Law*}}{\supseteq} (F\cap F^{\circ})\cap R_{*}R^{*}\cap F^{\circ}= (F\cap F^{\circ})\cap F\cap F^{\circ}=F\cap F^{\circ}
	\end{align*}
\end{remark}

\begin{corollary}
	A morphism $R_{*}\colon(X,E)\to (Y,F)$ is an $\nc{so}$-morphism in $\ca{E}_{ex/reg}$ if and only if $R_{*}R^{*}=F$, if and only if $RR^{\circ}=F\cap F^{\circ}$. 
\end{corollary}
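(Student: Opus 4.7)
The corollary packages two biconditionals, and a good portion of the work is already in place elsewhere in the section, so the plan is to reduce everything to one main step. First, the equivalence $R_{*}R^{*}=F \iff RR^{\circ}=F\cap F^{\circ}$ requires no new argument: the direction $RR^{\circ}=F\cap F^{\circ}\Rightarrow R_{*}R^{*}=F$ is exactly the calculation $S_{*}S^{*}=G(G\cap G^{\circ})G=G$ performed at the end of the proof of \ref{factorizations in ex/reg}, and the reverse direction is precisely the content of the Remark immediately preceding the corollary. Moreover, the implication $R_{*}R^{*}=F \Rightarrow R_{*}$ is an $\nc{so}$-morphism is already recorded as \ref{ff,so,isos in ex/reg}(3).

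So the only genuinely new piece is the implication $R_{*}$ is $\nc{so} \Rightarrow R_{*}R^{*}=F$. My plan is to exploit the $(\nc{so},\nc{ff})$-factorization constructed in \ref{factorizations in ex/reg}. Factor $R_{*}=S_{*}Q_{*}$ with $Q_{*}$ an $\nc{so}$-morphism, $S_{*}$ an $\nc{ff}$-morphism, and (crucially) with the identity $Q_{*}Q^{*}=G$ established in that proof. Since we are assuming $R_{*}$ itself is $\nc{so}$, \ref{properties of so-morphisms}(2) forces $S_{*}$ to be $\nc{so}$ as well.

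The decisive step is then to argue that an $S_{*}$ which is simultaneously $\nc{so}$ and $\nc{ff}$ must be an isomorphism; this is the standard orthogonality trick, obtained by applying $S_{*}\perp S_{*}$ to the square whose horizontals are identities and whose verticals are both $S_{*}$, which produces a two-sided inverse. Once $S_{*}$ is an isomorphism in $\ca{E}_{ex/reg}$ its right adjoint $S^{*}$ in $Q_{w}(\ca{E})$ must be its inverse (by uniqueness of adjoints), so $S_{*}S^{*}=F$, and we conclude by the short calculation
\begin{displaymath}
R_{*}R^{*} \;=\; S_{*}Q_{*}Q^{*}S^{*} \;=\; S_{*}GS^{*} \;=\; S_{*}S^{*} \;=\; F,
\end{displaymath}
using in the third equality that $G$ is the identity at $(Z,G)$ in $Q_{w}(\ca{E})$. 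I do not anticipate any real obstacle: this is the familiar fact that in a category with an orthogonal factorization system, a morphism lies in the left class iff the right-class part of its factorization is an isomorphism, and every ingredient we need -- existence of factorizations, right-cancellability of $\nc{so}$-morphisms, and the relational characterization of $\nc{ff}$-morphisms from \ref{ff,so,isos in ex/reg}(1) -- has already been proved earlier in this section.
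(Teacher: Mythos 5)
Your proposal is correct and follows essentially the same route as the paper: the two already-established implications are cited in the same way, and the remaining direction is handled exactly as in the paper's proof, by taking the $(\nc{so},\nc{ff})$-factorization $R_{*}=S_{*}Q_{*}$ with $Q_{*}Q^{*}=G$, cancelling to see $S_{*}$ is $\nc{so}$ as well as $\nc{ff}$ and hence an iso, and then computing $R_{*}R^{*}=S_{*}GS^{*}=F$. The only (harmless) quibble is that in the paper's convention for orthogonality squares the lifting argument uses the square with $S_{*}$ as both horizontals and identities as verticals, not the transpose you describe.
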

\begin{proof}
	If $R_{*}R^{*}=F$, then we know that $R_{*}$ is an $\nc{so}$-morphism by \ref{ff,so,isos in ex/reg}. In addition, by the above remark we know that $R_{*}R^{*}=F$ is equivalent to $RR^{\circ}=F\cap F^{\circ}$.
	
	Now assume that $R_{*}$ is an $\nc{so}$-morphism. From the proof of \ref{factorizations in ex/reg} we know that $R_{*}$ can be factored as $\begin{tikzcd}(X,E)\ar[r,two heads,"Q_{*}"] & (Z,G)\ar[r,tail,"S_{*}"] & (Y,F)\end{tikzcd}$, where $S_{*}$ is an $\nc{ff}$-morphism and $Q_{*}$ satisfies $Q_{*}Q^{*}=G$. Then $S_{*}$ is also an $\nc{so}$-morphism, since $R_{*}$ is such, and hence must be an iso. It then follows immediately that $R_{*}$ also satisfies $R_{*}R^{*}=F$.
\end{proof}

With this equational characterization of $\nc{so}$-morphisms in hand, we are now in a position to prove their stability under pullback.

\begin{proposition}
	$\nc{so}$-morphisms are stable under pullback in $\ca{E}_{ex/reg}$.
\end{proposition}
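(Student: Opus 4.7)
The plan is to reduce the statement to the stability of $\nc{so}$-morphisms under pullback in the base regular category $\ca{E}$ itself, by exploiting the explicit description of pullbacks in $\ca{E}_{ex/reg}$ as tabulations of relations in $\ca{E}$.

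The key preliminary step I would establish is the following criterion: if a morphism $\Phi_{*}\colon(X,E)\to(Y,F)$ in $\ca{E}_{ex/reg}$ has graph $\Phi$ represented in $\ca{E}$ by an $\nc{ff}$-morphism $\langle\phi_{0},\phi_{1}\rangle\colon T\rightarrowtail X\times Y$, then $\Phi_{*}$ is an $\nc{so}$-morphism in $\ca{E}_{ex/reg}$ if and only if $\phi_{1}$ is an $\nc{so}$-morphism in $\ca{E}$. Indeed, by the preceding corollary the condition on $\Phi_{*}$ reads $\Phi\Phi^{\circ}=F\cap F^{\circ}$; since the inclusion $\Phi\Phi^{\circ}\subseteq F\cap F^{\circ}$ always holds and $\Phi$ is two-sided compatible with $F\cap F^{\circ}$ in $Q(\ca{E})$, this reduces to $\Delta_{Y}\subseteq\Phi\Phi^{\circ}$. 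Computing $\Phi\Phi^{\circ}$ in $\mathrm{Rel}(\ca{E})$ as the image of $\langle\phi_{1}e_{0},\phi_{1}e_{1}\rangle\colon T\times_{X}T\to Y\times Y$ (where $e_{0},e_{1}$ are the pullback projections of $\phi_{0}$ along itself), this inclusion unfolds precisely to the definition of $\phi_{1}$ being an $\nc{so}$-morphism in $\ca{E}$ via generalized elements. A symmetric argument starting from $\Phi^{\circ}\Phi\supseteq E\cap E^{\circ}$, which holds merely because $\Phi$ is a map in $Q(\ca{E})$, will show unconditionally that $\phi_{0}$ is always an $\nc{so}$-morphism in $\ca{E}$.

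With this criterion in hand, I would then turn to the pullback square in the statement. Let $\langle r_{0},r_{1}\rangle\colon A\rightarrowtail X\times Z$ and $\langle s_{0},s_{1}\rangle\colon B\rightarrowtail Y\times Z$ be $\nc{ff}$-representatives in $\ca{E}$ of the graphs $R$ and $S$. According to the pullback construction described just before \ref{factorizations in ex/reg}, the pair $(P_{0*},P_{1*})$ is a tabulation of $S^{\circ}R\in Q(\ca{E})$, and an $\nc{ff}$-representative $\langle p_{0},p_{1}\rangle\colon W\rightarrowtail X\times Y$ of this composite relation is obtained via the standard calculus of relations: form the pullback $C\coloneqq A\times_{Z}B$ of $r_{1}$ and $s_{1}$ in $\ca{E}$ with projections $\pi_{A},\pi_{B}$, then factor $\langle r_{0}\pi_{A},s_{0}\pi_{B}\rangle\colon C\to X\times Y$ as an $\nc{so}$-morphism $q\colon C\twoheadrightarrow W$ followed by $\langle p_{0},p_{1}\rangle$; this yields in particular the equality $p_{1}q=s_{0}\pi_{B}$. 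By the criterion of the previous paragraph, it is therefore enough to verify that $p_{1}$ is an $\nc{so}$-morphism in $\ca{E}$.

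The hypothesis that $R_{*}$ is an $\nc{so}$-morphism of $\ca{E}_{ex/reg}$, combined with the criterion, gives that $r_{1}\colon A\to Z$ is an $\nc{so}$-morphism in $\ca{E}$, while the fact that $S$ is a map in $Q(\ca{E})$ (with no so-assumption on $S_{*}$) gives unconditionally that $s_{0}\colon B\to Y$ is an $\nc{so}$-morphism in $\ca{E}$. Since $\pi_{B}$ is by construction the pullback of the $\nc{so}$-morphism $r_{1}$ along $s_{1}$ in $\ca{E}$, the stability condition (R3) applied to $\ca{E}$ itself gives that $\pi_{B}$ is an $\nc{so}$-morphism. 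Consequently the composite $s_{0}\pi_{B}=p_{1}q$ is an $\nc{so}$-morphism in $\ca{E}$, and the right cancellation property of $\nc{so}$-morphisms (\ref{properties of so-morphisms}(2)) then forces $p_{1}$ itself to be an $\nc{so}$-morphism in $\ca{E}$. Applying the criterion once more concludes that $P_{1*}$ is an $\nc{so}$-morphism in $\ca{E}_{ex/reg}$. The main delicate point in the whole argument is the clean verification of the criterion in the first step; after that, the desired stability reduces to routine composition and pullback stability of $\nc{so}$-morphisms in $\ca{E}$.
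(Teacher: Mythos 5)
Your proof is correct, but it takes a genuinely different route from the paper's. The paper stays entirely inside the allegory $Q(\ca{E})$: writing the identity on the vertex of the tabulation as $T\cap T^{\circ}=P^{\circ}P\cap Q^{\circ}Q$ and applying the modular law \ref{Modular Law} and its dual \ref{Modular Law*}, it computes $QQ^{\circ}\supseteq QP^{\circ}PQ^{\circ}\cap(F\cap F^{\circ})=S^{\circ}RR^{\circ}S\cap(F\cap F^{\circ})=F\cap F^{\circ}$, using $RR^{\circ}=G\cap G^{\circ}$ and $S^{\circ}S\supseteq\Delta$. You instead descend to $\nc{ff}$-representatives in $\ca{E}$, characterize the $\nc{so}$-morphisms of $\ca{E}_{ex/reg}$ by the condition that the second leg of a representative of the graph be an $\nc{so}$-morphism of $\ca{E}$ (and observe that the first leg is always one), and then conclude by invoking (R3) and the cancellation properties of $\nc{so}$-morphisms in $\ca{E}$ itself. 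Both arguments are sound; the paper's is shorter and uniform with the other relational computations of the section, while yours has the merit of making explicit exactly where pullback-stability in $\ca{E}$ enters (in the paper's version it is concealed inside the associativity of relational composition that underpins the allegory structure). One small point to tighten: in the final step you apply your criterion to $p_{1}$, a leg of the chosen representative of the relation $S^{\circ}R$, whereas the criterion as you state it concerns a representative of the graph $P_{1}=\mathfrak{gr}(P_{1*})=(F\cap F^{\circ})p_{1}$, which is a larger relation. The implication you actually need still holds --- if $p_{1}$ is an $\nc{so}$-morphism then $p_{1}p_{1}^{\circ}=\Delta_{Y}$, hence $P_{1}P_{1}^{\circ}=(F\cap F^{\circ})p_{1}p_{1}^{\circ}(F\cap F^{\circ})=F\cap F^{\circ}$ --- but this deserves a line of justification rather than a direct citation of the criterion, since for a general relation the second leg of its representative need not coincide with $p_{1}$ after composing with $F\cap F^{\circ}$.
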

\begin{proof}
	Consider the following pullback square in $\ca{E}_{ex/reg}$ where we assume that $R_{*}$ is an $\nc{so}$-morphism, so that we have $R_{*}R^{*}=G$ or equivalently $RR^{\circ}= G\cap G^{\circ}$.
	\begin{center}
		\hfil
		\xy\xymatrix{(W,T)\ar[r]^{Q_{*}}\ar[d]_{P_{*}} & (Y,F)\ar[d]^{S_{*}} \\
			(X,E)\ar@{->>}[r]_{R_{*}} & (Z,G)}\endxy
		\hfil 
	\end{center}
	
	By the construction of pullbacks we know that $(P_{*},Q_{*})$ is a tabulation of $S^{\circ}R$, so that $QP^{\circ}=S^{\circ}R$. Thus, we have
	\begin{eqnarray*}
		QQ^{\circ} & = & Q(T\cap T^{\circ})Q^{\circ}=Q(P^{\circ}P\cap Q^{\circ}Q)Q^{\circ}\stackrel{\ref{Modular Law}}{\supseteq} (QP^{\circ}P\cap Q)Q^{\circ} \\
		& \stackrel{\ref{Modular Law*}}{\supseteq} & QP^{\circ}PQ^{\circ}\cap(F\cap F^{\circ})=S^{\circ}RR^{\circ}S\cap(F\cap F^{\circ}) \\
		& = & S^{\circ}(G\cap G^{\circ})S\cap(F\cap F^{\circ})=S^{\circ}S\cap(F\cap F^{\circ})=F\cap F^{\circ}
	\end{eqnarray*}	
	Hence, $QQ^{\circ}=F\cap F^{\circ}$ or equivalently $Q_{*}Q^{*}=F$ and hence $Q_{*}$ is an $\nc{so}$-morphism.
\end{proof}

Putting together what we have proved so far, we have the following.

\begin{corollary}
	$\ca{E}_{ex/reg}$ is a regular category and $\Gamma\colon\ca{E}\to\ca{E}_{ex/reg}$ is a fully order-faithful regular functor.
\end{corollary}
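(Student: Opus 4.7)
The proof is largely a synthesis of results already established in this section, so my plan is to assemble rather than prove anything substantially new. Regularity of $\ca{E}_{ex/reg}$ is immediate: (R1) is \ref{finite limits in ex/reg}, (R2) is \ref{factorizations in ex/reg}, and (R3) is the proposition just before the corollary. Nothing further needs to be said for the first assertion.

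For the statement about $\Gamma$, I would handle the three properties separately. Order-faithfulness in both directions is built into the definition of the order on hom-sets of $\ca{E}_{ex/reg}$ and was already observed right after $\Gamma$ was introduced. Fullness is essentially the content of \ref{Pos-enriched maps are morphisms}: a morphism $\Gamma X \to \Gamma Y$ in $\ca{E}_{ex/reg}$ is, by definition, a map $(X, I_X) \to (Y, I_Y)$ in $Q_w(\ca{E})$; since the identities of $Q_w(\ca{E})$ at these objects coincide with those of $\mathrm{Rel}_w(\ca{E})$ at $X$ and $Y$, this is the same data as a map $X \to Y$ in $\mathrm{Rel}_w(\ca{E})$, and by \ref{Pos-enriched maps are morphisms} every such map is the hypergraph $f_*$ of a unique $f\colon X\to Y$ in $\ca{E}$, namely $\Gamma f$.

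Preservation of finite limits reduces, by \ref{finite limits=finite products+inserters}, to preservation of the terminal, binary products, and inserters. The terminal case was flagged in the last line of the proof of \ref{finite limits in ex/reg}. For products, I would inspect the tabulation constructed there with $E = I_X$ and $F = I_Y$: a short computation with generalized elements shows $\pi_X^{\circ} I_X \pi_X \cap \pi_Y^{\circ} I_Y \pi_Y = I_{X\times Y}$, and the projections produced by the tabulation equal $I_X\pi_X = (\pi_X)_* = \Gamma\pi_X$ and similarly for $Y$. For inserters, given $f, g\colon X\to Y$ in $\ca{E}$, the tabulation of $g^{*}f_{*}\cap(I_X\cap I_X^{\circ})$ recovers the $\ca{E}$-inserter $e\colon E\to X$ of $(f,g)$ equipped with the congruence $e^{\circ}I_X e = I_E$ (using that $e$ is an $\nc{ff}$-morphism), and this is $\Gamma$ applied to the inserter in $\ca{E}$.

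Finally, preservation of $\nc{so}$-morphisms uses the characterization in the preceding corollary. For $f\in\ca{E}$ an $\nc{so}$-morphism I need $\mathfrak{gr}(\Gamma f)\cdot\mathfrak{gr}(\Gamma f)^{\circ} = I_Y \cap I_Y^{\circ} = \Delta_Y$, which unpacks to the single equality $f f^{\circ} = \Delta_Y$ in $\mathrm{Rel}(\ca{E})$. This is straightforward: $f f^{\circ}$ is the image, under the $(\nc{so},\nc{ff})$-factorization, of $\langle f,f\rangle = \Delta_Y\circ f$; when $f$ itself is $\nc{so}$ and $\Delta_Y$ is $\nc{ff}$, this is already the factorization, so the image is $\Delta_Y$. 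There is no serious obstacle anywhere in the argument — the whole corollary is pure bookkeeping — the only thing to stay careful about is the identification $(\Gamma f)_* = f_*$, $(\Gamma f)^* = f^*$, and $\mathfrak{gr}(\Gamma f) = f$ when computing in the three bicategories in parallel.
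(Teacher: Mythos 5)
Your proposal is correct and follows the same route as the paper, which states this corollary simply as a summary of \ref{finite limits in ex/reg}, \ref{factorizations in ex/reg}, the pullback-stability proposition, and the observations made when $\Gamma$ was defined. The extra details you supply (fullness via \ref{Pos-enriched maps are morphisms} applied to $Q_w(\ca{E})$ at objects of the form $(X,I_X)$, and preservation of $\nc{so}$-morphisms via $ff^{\circ}=\Delta_Y$) are accurate fillings-in of steps the paper leaves implicit.
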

\vspace{3mm}

We next would like to prove that $\ca{E}_{ex/reg}$ is exact. To accomplish this we first make good on promises made much earlier. Namely, we identify $Q_{w}(\ca{E})$ as the bicategory of weakening-closed relations in $\ca{E}_{ex/reg}$ and, before that, $Q(\ca{E})$ as the bicategory of all relations in $\ca{E}_{ex/reg}$.

\begin{proposition}
	There is an equivalence $\mathrm{Rel}(\ca{E}_{ex/reg})\simeq Q(\ca{E})$.
\end{proposition}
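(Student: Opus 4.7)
The plan is to exhibit this as an equivalence of locally posetal bicategories via the tabulation construction: a morphism of $Q(\ca{E})$ will correspond precisely to an $\nc{ff}$-subobject (i.e.\ a relation) of the appropriate product in $\ca{E}_{ex/reg}$, under the bijection that sends a morphism to the subobject represented by its tabulation.

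First I would define a pseudofunctor $F\colon Q(\ca{E})\to\mathrm{Rel}(\ca{E}_{ex/reg})$ as the identity on objects, and on a morphism $\Phi\colon(X,E)\to(Y,F)$ by choosing any tabulation $(R_{0*},R_{1*})\colon(Z,T)\to(X,E),(Y,F)$ from \ref{Tabulations Existence} and letting $F(\Phi)$ be the relation in $\ca{E}_{ex/reg}$ represented by this pair. The tabulation condition $R_0^*R_{0*}\cap R_1^*R_{1*}=T$ is precisely joint order-monicity in $\ca{E}_{ex/reg}$, so $\langle R_{0*},R_{1*}\rangle\colon(Z,T)\to(X\times Y,E\times F)$ is genuinely an $\nc{ff}$-morphism. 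Uniqueness of tabulations up to canonical isomorphism makes $F$ well-defined on subobjects, and local full faithfulness follows from the universal property: an inclusion $\Phi\subseteq\Psi$ in $Q(\ca{E})$ produces, via the universal property of the tabulation of $\Psi$, a unique $H_*$ with $Q_{i*}H_*=P_{i*}$, witnessing the corresponding inclusion of subobjects in $\ca{E}_{ex/reg}$; conversely any such $H_*$ yields $P_1P_0^\circ\subseteq Q_1Q_0^\circ$ after passing to graphs via $\mathfrak{gr}$.

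For the quasi-inverse $G\colon\mathrm{Rel}(\ca{E}_{ex/reg})\to Q(\ca{E})$, I would send a relation represented by a jointly order-monic pair $(M_{0*},M_{1*})$ to $M_1M_0^\circ\in Q(\ca{E})$, where $M_i=\mathfrak{gr}(M_{i*})$. The identity $(F\cap F^\circ)M_1M_0^\circ(E\cap E^\circ)=M_1M_0^\circ$ follows from the corresponding identities on the maps $M_i$, so this lands in $Q(\ca{E})$. That $G$ and $F$ are mutually quasi-inverse is then essentially definitional: $R_1R_0^\circ=\Phi$ by the tabulation property, and any jointly order-monic pair is itself a tabulation of the relation it represents.

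The main obstacle is verifying that $F$ preserves composition. Given tabulations $(P_{0*},P_{1*})$ of $\Phi$ and $(Q_{0*},Q_{1*})$ of $\Psi$, the composite $F(\Psi)\circ F(\Phi)$ in $\mathrm{Rel}(\ca{E}_{ex/reg})$ is computed by pulling back $P_{1*}$ along $Q_{0*}$ and then $(\nc{so},\nc{ff})$-factoring $\langle P_{0*}P'_*,Q_{1*}Q'_*\rangle$. By the explicit description of pullbacks in $\ca{E}_{ex/reg}$ given earlier, the pullback projections $(P'_*,Q'_*)$ are themselves a tabulation of $Q_0^\circ P_1$, so in $Q(\ca{E})$ we have $Q'P'^\circ=Q_0^\circ P_1$. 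Applying $G$ to this composite relation gives $Q_1Q'P'^\circ P_0^\circ=Q_1Q_0^\circ P_1P_0^\circ=\Psi\Phi=G(F(\Psi\Phi))$; here one uses that passing through the $\nc{so}$-quotient $q_*$ of the image factorization does not change $M_1M_0^\circ$, because $qq^\circ$ is the identity of the target in $Q(\ca{E})$. Preservation of identities is straightforward, the identity relation on $(X,E)$ being represented by the diagonal pair $(1_{(X,E)*},1_{(X,E)*})$ whose graphs compose to $E\cap E^\circ$. The delicate part throughout is bookkeeping: one must systematically distinguish a morphism $R_*$ of $\ca{E}_{ex/reg}$ from its right adjoint $R^*$ and its graph $R=\mathfrak{gr}(R_*)$ in $Q(\ca{E})$, consistently invoking the equalities $R_*=F\circ R$ and $R^*=R^\circ\circ F$ to translate between the two viewpoints.
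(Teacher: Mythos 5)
Your proposal is correct and follows essentially the same route as the paper: the paper defines the equivalence as the identity-on-objects functor sending a jointly order-monomorphic pair $(R_{0*},R_{1*})$ to $R_{1}R_{0}^{\circ}$, verifies preservation of identities and of composition exactly via the pullback description ($\Pi_{1}\Pi_{0}^{\circ}=S_{0}^{\circ}R_{1}$) and the fact that the $\nc{so}$-part $Q_{*}$ of the image factorization satisfies $QQ^{\circ}=\Theta\cap\Theta^{\circ}$, and obtains full order-faithfulness from the existence and universal property of tabulations. Your presentation merely orients the functor in the opposite direction, which is immaterial for establishing the equivalence.
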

\begin{proof}
	We will define a functor $\mathfrak{F}\colon\mathrm{Rel}$$(\ca{E}_{ex/reg})\to$ $Q(\ca{E})$ by letting it be the identity on objects and mapping a relation represented by any jointly order-monomorphic pair $\xy\xymatrix{(X,E) & (Z,T)\ar[l]_{R_{0*}}\ar[r]^{R_{1*}} & (Y,F)}\endxy$ in $\ca{E}_{ex/reg}$ to the morphism $R_{1}R_{0}^{\circ}\colon(X,E)\to(Y,F)\in Q(\ca{E})$.
	
	To show that this assignment is functorial, consider first the diagonal relation on the object $(X,E)$ in $\mathrm{Rel}$$(\ca{E}_{ex/reg})$, i.e. the relation represented by the jointly order-monomorphic pair $\xy\xymatrix{(X,E) & (X,E)\ar[l]_{1_{(X,E)*}}\ar[r]^{1_{(X,E)*}} & (X,E)}\endxy$. Then the image of this relation under $\mathfrak{F}$ is $1_{(X,E)}1_{(X,E)}^{\circ}=(E\cap E^{\circ})(E\cap E^{\circ})^{\circ}=E\cap E^{\circ}$ and so $\mathfrak{F}$ preserves identity morphisms.
	
	Next, we consider two relations $\mathscr{R},\mathscr{S}$ in $\ca{E}_{ex/reg}$, say represented respectively by the jointly order-monomorphic pairs $\xy\xymatrix{(X,E) & (A,T)\ar[l]_{R_{0*}}\ar[r]^{R_{1*}} & (Y,F)}\endxy$ and \\ $\xy\xymatrix{(Y,F) & (B,T')\ar[l]_{S_{0*}}\ar[r]^{S_{1*}} & (Z,G)}\endxy$. To calculate the composition of these two relations we form the following pullback square in $\ca{E}_{ex/reg}$
	\begin{center}
		\hfil
		\xy\xymatrix{(C,\Omega)\ar[r]^{\Pi_{1*}}\ar[d]_{\Pi_{0*}} & (B,T')\ar[d]^{S_{0*}} \\
			(A,T)\ar[r]_{R_{1*}} & (Y,F)}\endxy
		\hfil 
	\end{center}
	and then the image factorization of $\langle R_{0*}\Pi_{0*},S_{1*}\Pi_{1*}\rangle$, say
	\begin{displaymath}
	\xy\xymatrix@=5em{(C,\Omega)\ar@{->>}[r]^{Q_{*}} & (D,\Theta)\ar@{>->}[r]^(0.4){\langle U_{0*},U_{1*}\rangle} & (X,E)\times(Z,G)}\endxy
	\end{displaymath}
	By construction of pullbacks in $\ca{E}_{ex/reg}$ we know that $\Pi_{1}\Pi_{0}^{\circ}=S_{0}^{\circ}R_{1}$. Also, by definition $\mathfrak{F}$ maps the composition of the two relations to $\mathfrak{F}(\mathscr{S}\mathscr{R})=U_{1}U_{0}^{\circ}$. But now we have that 
	\begin{displaymath}
	U_{1}U_{0}^{\circ}=U_{1}QQ^{\circ}U_{0}^{\circ}=S_{1}\Pi_{1}\Pi_{0}^{\circ}R_{0}^{\circ}=S_{1}S_{0}^{\circ}R_{1}R_{0}^{\circ}=\mathfrak{F}(\mathscr{S})\mathfrak{F}(\mathscr{R})
	\end{displaymath}
	
	Finally, the fact that $\mathfrak{F}$ preserves the order of morphisms and is fully (order-) faithful is precisely the existence of tabulations proved in \ref{Tabulations Existence}. Thus, $\mathfrak{F}$ is an equivalence of bicategories.
\end{proof}
\vspace{3mm}

\begin{proposition}
	There is an equivalence $\mathrm{Rel}_{w}(\ca{E}_{ex/reg})\simeq Q_{w}(\ca{E})$.
\end{proposition}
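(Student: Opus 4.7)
The strategy is to show that the equivalence $\mathfrak{F}\colon\mathrm{Rel}(\ca{E}_{ex/reg})\to Q(\ca{E})$ of the preceding proposition restricts to an equivalence between the sub-bicategories $\mathrm{Rel}_{w}(\ca{E}_{ex/reg})$ and $Q_{w}(\ca{E})$. The crux of the argument is to identify the image under $\mathfrak{F}$ of the weakening-closed identity $I_{(X,E)}$ for each object $(X,E)$.

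To compute $\mathfrak{F}(I_{(X,E)})$, recall that $I_{(X,E)}=1_{(X,E)*}/1_{(X,E)*}$ is a comma object in $\ca{E}_{ex/reg}$, and from the description of comma squares given earlier in the section it is realized by a tabulation of $1_{(X,E)}^{*}\cdot 1_{(X,E)*}=E\cdot E=E$. If $(c_{0*},c_{1*})$ is any such tabulation, then $(c_{0*},c_{1*})$ is the jointly order-monomorphic pair representing $I_{(X,E)}$, and the defining equation of tabulations gives $\mathfrak{F}(I_{(X,E)})=c_{1}c_{0}^{\circ}=E$, where $E$ is now viewed as a morphism of $Q(\ca{E})$ (it is one, since the reflexivity of the congruence $E$ forces $E\cap E^{\circ}\supseteq\Delta_{X}$ and hence $(E\cap E^{\circ})E(E\cap E^{\circ})=E$).

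Granted this identification, functoriality of $\mathfrak{F}$ with respect to composition yields that a relation $\mathscr{R}\colon(X,E)\looparrowright(Y,F)$ in $\ca{E}_{ex/reg}$ is weakening-closed, i.e.\ satisfies $\mathscr{R}=I_{(Y,F)}\mathscr{R}I_{(X,E)}$, if and only if $\mathfrak{F}(\mathscr{R})=F\,\mathfrak{F}(\mathscr{R})\,E$, which is precisely the defining condition for being a morphism of $Q_{w}(\ca{E})$. Essential surjectivity and local fully faithfulness transfer to the restriction from the ambient equivalence, and composition of weakening-closed relations coincides with the composition in $\mathrm{Rel}(\ca{E}_{ex/reg})$, while the identities are matched by the key computation above. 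This gives the desired equivalence of bicategories.

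The main technical point is therefore the identification $\mathfrak{F}(I_{(X,E)})=E$; once in place, everything else is formal bookkeeping. It is worth flagging, though, that neither $\mathrm{Rel}_{w}\hookrightarrow\mathrm{Rel}$ nor $Q_{w}\hookrightarrow Q$ is a functor of bicategories, since identities differ. Consequently the restricted equivalence does not arise by composing $\mathfrak{F}$ with these inclusions; one really must verify separately that $\mathfrak{F}$ matches the two different identity morphisms and that the two alternative composition laws agree on weakening-closed input, which is automatic here since in both cases composition is given by composition of underlying relations in $\ca{E}$.
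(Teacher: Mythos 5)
Your proof is correct, and its overall architecture coincides with the paper's: both restrict the functor $\mathfrak{F}$ from the preceding proposition, and in both cases the crux is to show that $\mathscr{R}$ is weakening-closed exactly when $\mathfrak{F}(\mathscr{R})$ lands in $Q_{w}(\ca{E})$, together with the computation of $\mathfrak{F}$ on the weakening-closed identities. Where you genuinely diverge is in how that crux is established. The paper proves the equivalence ``$\mathscr{R}$ weakening-closed $\iff R_{1*}R_{0}^{*}=R_{1}R_{0}^{\circ}$'' (which is the same condition as your $\mathfrak{F}(\mathscr{R})=F\,\mathfrak{F}(\mathscr{R})\,E$, since $FR_{1}R_{0}^{\circ}E=R_{1*}R_{0}^{*}$) by explicitly building the composite $I_{(Y,F)}\mathscr{R}I_{(X,E)}$ out of a pullback, two comma squares and an image factorization, and by a separate direct verification via the universal property of tabulations for the converse. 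You bypass this entirely by observing that the previous proposition already gives preservation of composition and local full order-faithfulness (hence injectivity on $1$-cells), so the characterization follows formally once $\mathfrak{F}(I_{(X,E)})=E$ is known — and that computation is exactly the one the paper performs at the end of its own proof. Your route is shorter and makes clear that the only new input beyond the ambient equivalence is the identification of the two identity $1$-cells; the paper's diagrammatic argument is more self-contained and exhibits concretely what the composite with the commas looks like. Your closing caveat that neither $\mathrm{Rel}_{w}\hookrightarrow\mathrm{Rel}$ nor $Q_{w}\hookrightarrow Q$ is functorial, so that identities and compositions must be matched separately, is exactly the right point to flag and matches the paper's own remark.
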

\begin{proof}
	We define a functor $\mathfrak{F}\colon\mathrm{Rel}_{w}$$(\ca{E}_{ex/reg})\to$ $Q_{w}(\ca{E})$ exactly as in the proof of the previous proposition. Then the main observation to make here is the following: a relation $\mathscr{R}\colon(X,E)\looparrowright(Y,F)$ represented by the jointly order-monomorphic pair $\xy\xymatrix{(X,E) & (Z,T)\ar[l]_{R_{0*}}\ar[r]^{R_{1*}} & (Y,F)}\endxy$ in $\ca{E}_{ex/reg}$ is weakening-closed if and only if $R_{1*}R_{0}^{*}=R_{1}R_{0}^{\circ}$ as relations in $\ca{E}$.
	
	Recall that $\mathscr{R}$ is a weakening-closed relation precisely if $I_{(Y,F)}\mathscr{R}I_{(X,E)}=\mathscr{R}$ in $\mathrm{Rel}$$(\ca{E}_{ex/reg})$. To compute the composition $I_{(Y,F)}\mathscr{R}I_{(X,E)}$ one has to form the following diagram in $\ca{E}_{ex/reg}$ where the top square is a pullback and the bottom two are commas and then take the image factorization of the morphism $\langle U_{0*}W_{0*},V_{1*}W_{1*}\rangle$.
	\begin{center}
		\hfil 
		\xy\xymatrix{ & & (C,X)\ar[dl]_{W_{0*}}\ar[dr]^{W_{1*}} & &  \\
			& (A,\Phi)\ar[dl]_{U_{0*}}\ar[dr]^{U_{1*}} & & (B,\Psi)\ar[dl]_{V_{0*}}\ar[dr]^{V_{1*}} & \\
			(X,E)\ar[dr]_{1_{(X,E)*}}\ar@{}[rr]|{\leq} & & (Z,T)\ar[dl]_{R_{0*}}\ar[dr]^{R_{1*}}\ar@{}[rr]|{\leq} & & (Y,F)\ar[dl]^{1_{(Y,F)*}}  \\
			& (X,E) & & (Y,F) & }\endxy
		\hfil 
	\end{center}
	Note that by the various limit constructions in $\ca{E}_{ex/reg}$ we know that we must have $R_{0}^{*}=U_{1}U_{0}^{\circ}$, $R_{1*}=V_{1}V_{0}^{\circ}$ and $V_{0}^{\circ}U_{1}=W_{1}W_{0}^{\circ}$.
	
	If $I_{(Y,F)}\mathscr{R}I_{(X,E)}=\mathscr{R}$, then there is a factorization in $\ca{E}_{ex/reg}$
	\begin{displaymath}
	\xy\xymatrix@=5em{(C,X)\ar@{->>}[r]^{Q_{*}} & (Z,T)\ar@{>->}[r]^(0.4){\langle R_{0*},R_{1*}\rangle} & (X,E)\times(Y,F)}\endxy
	\end{displaymath}
	with $Q_{*}$ an $\nc{so}$-morphism, so that $QQ^{\circ}=T\cap T^{\circ}$. Then we have that
	\begin{displaymath}
	R_{1*}R_{0}^{*}=V_{1}V_{0}^{\circ}U_{1}U_{0}^{\circ}=V_{1}W_{1}W_{0}^{\circ}U_{0}^{\circ}=R_{1}QQ^{\circ}R_{0}^{\circ}=R_{1}R_{0}^{\circ}
	\end{displaymath}
	
	Conversely, assume that $R_{1*}R_{0}^{*}=R_{1}R_{0}^{\circ}$ and let us consider four morphisms $\xy\xymatrix{(X,E) & (C,G)\ar@<-1ex>[l]_{S_{0*}}\ar@<1ex>[l]^{U_{*}}\ar@<1ex>[r]^{S_{1*}}\ar@<-1ex>[r]_{V_{*}} & (Y,F)}\endxy$ such that $(S_{0*},S_{1*})$ factors through $(R_{0*},R_{1*})$ and $U_{*}\leq S_{0*}$ and $S_{1*}\leq V_{*}$. Then we respectively have $S_{1*}S_{0}^{*}\subseteq R_{1*}R_{0}^{*}$ and $U^{*}\subseteq S_{0}^{*}$ and $V_{*}\subseteq S_{1*}$. Hence, $V_{*}U^{*}\subseteq S_{1*}S_{0}^{*}\subseteq R_{1*}R_{0}^{*}=R_{1}R_{0}^{\circ}$ and so $(U_{*},V_{*})$ must also factor through $(R_{0*},R_{1*})$ by the universal property of tabulations.
	\vspace{3mm}
	
	With this observation in hand, one can run the same proof as in the previous proposition to show that $\mathfrak{F}\colon\mathrm{Rel}_{w}(\ca{E}_{ex/reg})\to Q_{w}(\ca{E})$ thus defined is an equivalence. The only point of minor difference is in the proof that $\mathfrak{F}$ preserves identity morphisms. For this, just recall that the identity on an object $(X,E)$ in $\mathrm{Rel}_{w}(\ca{E}_{ex/reg})$ is the relation given by the following comma square
	\begin{center}
		\hfil
		\xy\xymatrix{(A,T)\ar[r]^{R_{1*}}\ar[d]_{R_{0*}}\ar@{}[dr]|{\leq} & (X,E)\ar[d]^{1_{(X,E)*}} \\
			(X,E)\ar[r]_{1_{(X,E)*}} & (X,E)}\endxy
		\hfil 
	\end{center}
	so that by construction of commas we have that $R_{1}R_{0}^{\circ}=1_{(X,E)}^{*}1_{(X,E)*}=EE=E$.
\end{proof}
\vspace{3mm}

Now from this last proposition one can immediately deduce that $\ca{E}_{ex/reg}$ is indeed an exact category.

\begin{corollary}
	The category $\ca{E}_{ex/reg}$ is exact.
\end{corollary}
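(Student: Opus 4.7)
The plan is to combine the splitting characterization of effective congruences (\ref{effective=splitting}) with the equivalence $\mathrm{Rel}_{w}(\ca{E}_{ex/reg})\simeq Q_{w}(\ca{E})$ just established. By \ref{effective=splitting}, it suffices to show that every congruence in $\ca{E}_{ex/reg}$ splits as an idempotent in $\mathrm{Rel}_{w}(\ca{E}_{ex/reg})$, and via the equivalence this reduces to splitting the corresponding idempotent in $Q_{w}(\ca{E})$. Since $Q_{w}(\ca{E})$ was designed precisely to split congruences of $\ca{E}$, the whole point of the argument will be to observe that a congruence in $\ca{E}_{ex/reg}$ presents, on the nose, a congruence in $\ca{E}$.

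Concretely, I would take a congruence $\mathscr{E}\colon(X,E)\looparrowright(X,E)$ in $\ca{E}_{ex/reg}$ and, using the equivalence $\mathrm{Rel}_{w}(\ca{E}_{ex/reg})\simeq Q_{w}(\ca{E})$, regard it as a weakening-closed relation $\mathscr{E}\colon X\looparrowright X$ in $\ca{E}$ satisfying $E\mathscr{E}=\mathscr{E}=\mathscr{E}E$. Reflexivity in $\ca{E}_{ex/reg}$ says $\mathscr{E}\supseteq E\supseteq I_{X}$, so $\mathscr{E}$ is reflexive already in $\ca{E}$; transitivity says $\mathscr{E}\mathscr{E}\subseteq \mathscr{E}$ (and combining with $\mathscr{E}\supseteq E$ one gets $\mathscr{E}\mathscr{E}=\mathscr{E}$). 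Hence $\mathscr{E}$ is a genuine congruence on $X$ in $\ca{E}$, and so $(X,\mathscr{E})$ is an object of $\ca{E}_{ex/reg}$.

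Next, I would exhibit the splitting explicitly through $(X,\mathscr{E})$. Let $\Phi\coloneqq \mathscr{E}\colon(X,E)\to(X,\mathscr{E})$ and $\Psi\coloneqq \mathscr{E}\colon(X,\mathscr{E})\to(X,E)$, both viewed as morphisms in $Q_{w}(\ca{E})$; the identity-compatibility conditions $\mathscr{E}E=E\mathscr{E}=\mathscr{E}$ and $\mathscr{E}\mathscr{E}=\mathscr{E}$ make them well-defined morphisms. Then $\Psi\Phi=\mathscr{E}\mathscr{E}=\mathscr{E}\supseteq E=1_{(X,E)*}$ and $\Phi\Psi=\mathscr{E}\mathscr{E}=\mathscr{E}=1_{(X,\mathscr{E})*}$, so $\Phi\dashv\Psi$ in $Q_{w}(\ca{E})$ with $\Phi\Psi$ equal to the identity. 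This means $\Phi$ is a morphism of $\ca{E}_{ex/reg}$ which splits $\mathscr{E}$: $\Psi\Phi=\mathscr{E}$ and $\Phi\Psi=1_{(X,\mathscr{E})*}$. Transporting back through the equivalence $Q_{w}(\ca{E})\simeq\mathrm{Rel}_{w}(\ca{E}_{ex/reg})$, the congruence $\mathscr{E}$ splits in $\mathrm{Rel}_{w}(\ca{E}_{ex/reg})$, and \ref{effective=splitting} concludes that $\mathscr{E}$ is effective.

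There is no real obstacle here beyond bookkeeping: the key insight is the trivial-looking one that ``a congruence on $(X,E)$'' is literally the same data as ``a congruence on $X$ containing $E$'', precisely because $Q_{w}(\ca{E})$ is defined by adjoining splittings for all congruences in $\ca{E}$. The only care needed is in keeping straight that the identity on $(X,E)$ in $Q_{w}(\ca{E})$ is $E$ (not $I_{X}$), and in using the equivalence with $\mathrm{Rel}_{w}(\ca{E}_{ex/reg})$ to translate between the external splitting in $Q_{w}(\ca{E})$ and the internal notion of effectiveness in $\ca{E}_{ex/reg}$.
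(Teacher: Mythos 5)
Your proposal is correct and follows essentially the same route as the paper: translate the congruence on $(X,E)$ through the equivalence $\mathrm{Rel}_{w}(\ca{E}_{ex/reg})\simeq Q_{w}(\ca{E})$ into a congruence $\mathscr{E}\supseteq E$ on $X$ in $\ca{E}$, split the resulting idempotent through the object $(X,\mathscr{E})$ by construction of $Q_{w}(\ca{E})$, and conclude via \ref{effective=splitting}. Your version merely spells out the adjunction checks for the splitting morphisms that the paper leaves implicit.
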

\begin{proof}
	Using the equivalence $\mathrm{Rel}_{w}(\ca{E}_{ex/reg})\simeq Q_{w}(\ca{E})$, we can see that a congruence on an object $(X,E)\in\ca{E}_{ex/reg}$ corresponds precisely to a congruence $R$ on the object $X\in\ca{E}$ with $R\supseteq E$. 
	
	Indeed, consider a congruence $\mathscr{R}$ on the object $(X,E)\in\ca{E}_{ex/reg}$, represented by the jointly order-monomorphic pair $\begin{tikzcd}(X,E) & (Y,F)\ar[l,"R_{0*}"']\ar[r,"R_{1*}"] & (X,E)\end{tikzcd}$. Consider the functor $\mathfrak{F}\colon\mathrm{Rel}_{w}(\ca{E}_{ex/reg})\to Q_{w}(\ca{E})$ providing the equivalence and let $R\coloneqq\mathfrak{F}(\mathscr{R})$. Since $\mathfrak{F}$ is an equivalence, we have that $\mathscr{R}\mathscr{R}\subseteq\mathscr{R}$ if and only if $RR\subseteq R$ in $Q_{w}(\ca{E})$, i.e. that transitivity of $\mathscr{R}$ is equivalent to the same property for $R$ as a relation in $\ca{E}$. Similarly, the inclusion $\mathscr{R}\supseteq I_{(X,E)}$ is equivalent to $R\supseteq\mathfrak{F}(I_{(X,E)})=E$. In particular, $R\supseteq I_{X}$ and so $R$ is a congruence on $X\in\ca{E}$.
	
	The idempotent morphism $\mathfrak{F}(\mathscr{R})=R\colon(X,E)\to(X,E)$ in $Q_{w}(\ca{E})$ now splits by construction, namely as $\begin{tikzcd}(X,E)\ar[r,"R"] & (X,R)\ar[r,"R"] & (X,E)\end{tikzcd}$. Thus, $\mathscr{R}$ splits as an idempotent in $\mathrm{Rel}_{w}$$(\ca{E}_{ex/reg})$ and hence we have shown that $\ca{E}_{ex/reg}$ is exact.
\end{proof}

It remains to prove that $\ca{E}_{ex/reg}$, or more precisely $\Gamma\colon\ca{E}\to\ca{E}_{ex/reg}$ satisfies the required universal property. Before doing this, we observe in the proposition that follows that every object of $\ca{E}_{ex/reg}$ appears as a quotient of a congruence coming from $\ca{E}$ in a canonical way.

\begin{proposition}
	For every object $(X,E)\in\ca{E}_{ex/reg}$ there exists an exact sequence $\xy\xymatrix{\Gamma E\ar@<1ex>[r]^{\Gamma e_{0}}\ar@<-1ex>[r]_{\Gamma e_{1}} & \Gamma X\ar@{->>}[r]^{E} & (X,E)}\endxy$, where $\langle e_{0},e_{1}\rangle\colon E\rightarrowtail X\times X$ is an $\nc{ff}$-morphism representing the congruence $E$ in $\ca{E}$.
\end{proposition}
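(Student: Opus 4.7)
The plan is to take the $\nc{so}$-morphism $E_* \colon (X, I_X) \twoheadrightarrow (X, E)$ in $\ca{E}_{ex/reg}$ to be the one given by the relation $E$ itself, regarded as a morphism in $Q_w(\ca{E})$, and to identify its kernel congruence as the pair $(\Gamma e_0, \Gamma e_1)$; the exactness of the sequence will then follow formally from the regularity of $\ca{E}_{ex/reg}$ together with Proposition \ref{Kernel Congruences and Coinserters}.

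First I would verify that $E \in Q_w(\ca{E})$ defines a well-formed morphism $E_* \colon (X, I_X) \to (X, E)$ in $\ca{E}_{ex/reg}$, with right adjoint $E^*$ the same relation $E$ now viewed as a morphism $(X, E) \to (X, I_X)$. Reflexivity and transitivity of $E$ give $E \cdot E = E$, from which both the composition laws in $Q_w(\ca{E})$ and the adjunction inclusions $E^* E_* = E \supseteq I_X$ and $E_* E^* = E \subseteq E$ are immediate. Moreover, $E_* E^* = E$ is the identity on $(X, E)$ in $Q_w(\ca{E})$, so by Lemma \ref{ff,so,isos in ex/reg} the map $E_*$ is an $\nc{so}$-morphism.

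Next I would identify $(\Gamma e_0, \Gamma e_1)$ as the kernel congruence of $E_*$, using the equivalence $\mathfrak{F} \colon \mathrm{Rel}_w(\ca{E}_{ex/reg}) \simeq Q_w(\ca{E})$ established above. The fact that $\langle e_0, e_1 \rangle$ is an $\nc{ff}$-morphism in $\ca{E}$ is exactly the equation $e_0^* e_{0*} \cap e_1^* e_{1*} = I_E$, which by the characterization of jointly order-monomorphic pairs in $\ca{E}_{ex/reg}$ (the corollary right after Proposition \ref{Tabulations Existence}) says precisely that $(\Gamma e_0, \Gamma e_1)$ is jointly order-monomorphic in $\ca{E}_{ex/reg}$. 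Applying $\mathfrak{F}$ to the corresponding weakening-closed relation yields $e_{1*} e_0^* = I_X \, e_1 e_0^{\circ} \, I_X = E$, the last equality holding because $e_1 e_0^{\circ}$ is the relation in $\ca{E}$ represented by $\langle e_0, e_1 \rangle$, namely $E$ itself, and $E$ is weakening-closed. On the other hand, the kernel congruence of $E_*$ corresponds under $\mathfrak{F}$ to $E^* E_* = E$. Since $\mathfrak{F}$ is an equivalence and both jointly order-monomorphic pairs represent the same relation in $Q_w(\ca{E})$, they must be canonically isomorphic.

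Finally, $E_*$ being an $\nc{so}$-morphism in the regular category $\ca{E}_{ex/reg}$ is effective by Proposition \ref{simplifying regularity}; then part (1) of Proposition \ref{Kernel Congruences and Coinserters} shows that $E_*$ must be the coinserter of its kernel congruence $(\Gamma e_0, \Gamma e_1)$, yielding the desired exact sequence. The main bookkeeping obstacle is keeping track of the three parallel pictures---the jointly $\nc{ff}$ pair $(e_0, e_1)$ in $\ca{E}$, its image $(\Gamma e_0, \Gamma e_1)$ in $\ca{E}_{ex/reg}$, and the corresponding morphism $E \in Q_w(\ca{E})$---but once the equivalence $\mathfrak{F}$ is invoked, the identifications are essentially tautological.
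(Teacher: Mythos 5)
Your proof is correct and follows essentially the same route as the paper's: verify that $E$ defines an effective ($\nc{so}$-) morphism $E_{*}\colon\Gamma X\to(X,E)$ via $EI_{X}=E=EE$ and $E_{*}E^{*}=E$, and then identify its kernel congruence with $(\Gamma e_{0},\Gamma e_{1})$ by reducing to the relational identities $e_{0}^{*}e_{0*}\cap e_{1}^{*}e_{1*}=I_{E}$ and $e_{1}e_{0}^{\circ}=E$ in $\ca{E}$. The only cosmetic difference is that you route the kernel-congruence identification through the equivalence $\mathfrak{F}\colon\mathrm{Rel}_{w}(\ca{E}_{ex/reg})\simeq Q_{w}(\ca{E})$, whereas the paper invokes the explicit characterization of comma squares in $\ca{E}_{ex/reg}$; the underlying computations are the same.
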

\begin{proof}
	Observe that $E$ indeed defines a morphism $E_{*}\colon\Gamma X\to (X,E)$ in $\ca{E}_{ex/reg}$ which is in fact effective because $EI_{X}=E=EE$, $E^{*}E_{*}=EE=E\supseteq I_{X}$, $E_{*}E^{*}=EE=E$. Now it suffices to show that the square below is a comma square in $\ca{E}_{ex/reg}$.
	\begin{center}
		\hfil
		\xy\xymatrix{\Gamma E\ar[r]^{\Gamma e_{1}}\ar[d]_{\Gamma e_{0}}\ar@{}[dr]|{\leq} & \Gamma X\ar@{->>}[d]^{E} \\
			\Gamma X\ar@{->>}[r]_{E} & (X,E)}\endxy
		\hfil 
	\end{center}
	But by the construction of comma squares in $\ca{E}_{ex/reg}$ this is equivalent to having $(\Gamma e_{0})^{*}\Gamma e_{0}\cap(\Gamma e_{1})^{*}\Gamma e_{1}=I_{E}$ and $\Gamma e_{1}(\Gamma e_{0})^{\circ}=E^{*}E_{*}$ i.e. $e_{0}^{*}e_{0*}\cap e_{1}^{*}e_{1*}=I_{E}$ and $e_{1}e_{0}^{\circ}=EE$ as relations in $\ca{E}$, both of which hold.
\end{proof}
\vspace{3mm}

Now we at last come to the proof that $\ca{E}_{ex/reg}$ satisfies the universal property that exhibits it as the exact completion of the regular category $\ca{E}$. Before proceeding, let us make a couple of quick observations that will be used in the course of our calculations in the proof that follows below.

Consider an exact fork $\xy\xymatrix{E\ar@<1ex>[r]\ar@<-1ex>[r] & X\ar@{->>}[r]^{p} & P}\endxy$ in the regular category $\ca{E}$. Recall that in the calculus of relations this means that $E=p^{*}p_{*}$ and $p_{*}p^{*}=I_{P}$, where the second equality can equivalently be replaced by $pp^{\circ}=\Delta_{P}$. In addition, the kernel pair $p^{\circ}p$ of $p$ can be written as $p^{*}p_{*}\cap(p^{*}p_{*})^{\circ}$ and hence we also have $p^{\circ}p=E\cap E^{\circ}$  We now observe that the following equalities must hold:
\begin{itemize}
	\item $pE=p_{*}$.
	\item $Ep^{\circ}=p^{*}$.
	\item $p_{*}Ep^{*}=I_{P}$.
\end{itemize}
Indeed, for the first of these we have $pE\subseteq p_{*}E=p_{*}p^{*}p_{*}=p_{*}$ and also $pE=pp^{*}p_{*}\supseteq pp^{\circ}p_{*}=\Delta_{P}p_{*}=p_{*}$. The second one follows similarly. Finally, for the last one we have $p_{*}Ep^{*}=p_{*}p^{*}p_{*}p^{*}=I_{P}I_{P}=I_{P}$.
\vspace{1cm}

\begin{theorem}\label{Universal property of ex/reg}
	Let $F\colon\ca{E}\to\ca{F}$ be a regular functor with $\mathcal{F}$ an exact category. Then there is a unique (up to iso) regular functor $\overline{F}\colon\ca{E}_{ex/reg}\to\ca{F}$ such that $\overline{F}\circ\Gamma\cong F$.
\end{theorem}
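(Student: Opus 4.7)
The plan is to define $\overline{F}$ by exploiting the canonical presentation of every object of $\ca{E}_{ex/reg}$ as a quotient of a congruence coming from $\ca{E}$, together with the exactness of $\ca{F}$. For $(X,E) \in \ca{E}_{ex/reg}$, represent the congruence $E$ by a jointly order-monomorphic pair $\langle e_0, e_1\rangle\colon E \rightarrowtail X \times X$ in $\ca{E}$. Since $F$ is regular and therefore preserves finite limits, $F\langle e_0, e_1\rangle$ represents a congruence on $FX$ in $\ca{F}$; by exactness of $\ca{F}$ this congruence admits a coinserter, and I define $\overline{F}(X,E)$ to be that coinserter, fitting into an exact sequence $FE \rightrightarrows FX \twoheadrightarrow \overline{F}(X,E)$ with quotient $\nc{so}$-morphism $q_{(X,E)}$. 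Note that for $X \in \ca{E}$ we have $\Gamma X = (X, I_X)$ with $F(I_X) = I_{FX} = 1_{FX}/1_{FX}$, whose coinserter is $1_{FX}$, so on objects already $\overline{F}\Gamma X \cong FX$.

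For morphisms, I invoke Proposition~\ref{morphisms between quotients of congruences}. A morphism $R_*\colon(X,E) \to (Y,G)$ in $\ca{E}_{ex/reg}$ is a weakening-closed relation $R_*\colon X \looparrowright Y$ in $\ca{E}$ together with a right adjoint $R^*$ satisfying the conditions of item~(2) of that proposition. Since $F$ preserves $\nc{ff}$-morphisms, $\nc{so}$-morphisms, pullbacks and commas, it induces a $\nc{Pos}$-functor between the relational bicategories preserving composition, intersection and the weakening identity relations; consequently $F(R_*), F(R^*)$ satisfy the analogous conditions with respect to the congruences $FE, FG$. Applying Proposition~\ref{morphisms between quotients of congruences} in $\ca{F}$ to the exact sequences defining $\overline{F}(X,E)$ and $\overline{F}(Y,G)$, I obtain a unique morphism $\overline{F}(R_*)\colon \overline{F}(X,E) \to \overline{F}(Y,G)$, characterized by the equality of hypergraphs $\overline{F}(R_*)_* = (q_{(Y,G)})_*\, F(R_*)\, (q_{(X,E)})^*$.

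Functoriality and order-preservation of $\overline{F}$ then follow by routine verification using this explicit formula and the fact that $F$ commutes with the relational operations. The natural isomorphism $\overline{F}\circ\Gamma \cong F$ holds because on objects we have $\overline{F}\Gamma X = FX$ as noted, while on a morphism $f\colon X \to Y$ the hypergraph formula gives $\overline{F}(\Gamma f)_* = 1_{FY*}\, F(f_*)\, 1_{FX}^* = (Ff)_*$, hence $\overline{F}(\Gamma f) = Ff$. That $\overline{F}$ is a regular functor reduces to the fact that finite limits in $\ca{E}_{ex/reg}$ were constructed via tabulations and that $\nc{so}$-morphisms there were characterized equationally by $R_*R^* = G$, both conditions being preserved under the relational action of $F$.

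For uniqueness, let $G'\colon\ca{E}_{ex/reg}\to\ca{F}$ be another regular functor with $G'\circ\Gamma \cong F$. Since $G'$ is regular and the canonical sequence $\Gamma E \rightrightarrows \Gamma X \twoheadrightarrow (X,E)$ is exact in $\ca{E}_{ex/reg}$, its image under $G'$ is exact in $\ca{F}$, forcing $G'(X,E)$ to be a coinserter of $FE \rightrightarrows FX$ and hence canonically isomorphic to $\overline{F}(X,E)$. These comparison isomorphisms are natural in morphisms because both $G'$ and $\overline{F}$ satisfy the same hypergraph description via the calculus of relations in $\ca{F}$. The main obstacle in carrying this out is the bookkeeping required to check that the various properties of a morphism of $\ca{E}_{ex/reg}$, formulated in the relational calculus of $\ca{E}$, are transported by $F$ into the analogous properties in $\ca{F}$ so that Proposition~\ref{morphisms between quotients of congruences} genuinely applies; everything else reduces to unwinding the definitions and invoking the constructions of the preceding sections.
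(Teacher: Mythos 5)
Your proposal is correct and follows essentially the same route as the paper: defining $\overline{F}$ on objects as the coinserter of the image congruence, on morphisms via Proposition~\ref{morphisms between quotients of congruences} with the hypergraph formula $\overline{F}(R_{*})_{*}=p_{(Y,G)*}F(R_{*})p_{(X,E)}^{*}$, and deducing uniqueness from the fact that a regular functor must preserve the canonical exact sequence $\Gamma E\rightrightarrows\Gamma X\twoheadrightarrow(X,E)$. The only place where you understate the work is the verification that $\overline{F}$ preserves inserters and $\nc{so}$-morphisms, which in the paper is a nontrivial computation in the calculus of relations using the modular law rather than a routine unwinding, but your identification of what must be checked and why it holds is accurate.
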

\begin{proof}
	If $\overline{F}$ is to be regular, then by the previous proposition we must define it on any object $(X,E)\in\ca{E}_{ex/reg}$ as the following coinserter in $\ca{F}$
	\begin{center}
		\hfil
		\xy\xymatrix@=4em{F(E)\ar@<1ex>[r]^{Fe_{0}}\ar@<-1ex>[r]_{Fe_{1}} & FX\ar@{->>}[r]^{p_{(X,E)}} & \overline{F}(X,E)}\endxy
		\hfil
	\end{center}
	which exists because regular functors preserve congruences and $\ca{F}$ is exact.
	
	Also, given any morphism $R_{*}\colon(X,E)\to(Y,G)\in\ca{E}_{ex/reg}$, we have relations $F(R_{*})\colon FX\looparrowright FY$ and $F(R^{*})\colon FY\looparrowright FX$ in $\ca{F}$ satisfying the following equations:
	\begin{gather*}
	F(G)F(R_{*})F(E)=F(GR_{*}E)=F(R_{*}) \\
	F(E)F(R^{*})F(G)=F(ER^{*}G)=F(R^{*}) \\
	F(R^{*})F(R_{*})=F(R^{*}R_{*})\supseteq F(E) \\ 
	F(R_{*})F(R^{*})=F(R_{*}R^{*})\subseteq F(G) \\
	\end{gather*}
	where we used the fact that regular functors preserve the compositions and inclusions of relations. Thus, by \ref{morphisms between quotients of congruences} we can define $\overline{F}(R_{*})$ to be the uniquely associated morphism between quotients $\overline{F}(X,E)\to\overline{F}(Y,G)$. More explicitly, $\overline{F}(R_{*})$ is the morphism uniquely determined by the equality $\overline{F}(R_{*})_{*}=p_{(Y,G)*}F(R_{*})p_{(X,E)}^{*}$ in $\mathrm{Rel}(\ca{F})$. It is immediate that this defines a functor $\ca{E}_{ex/reg}\to\ca{F}$ and clearly $\overline{F}\circ\Gamma\cong F$. Note that by the discussion following \ref{morphisms between quotients of congruences} we also have $\overline{F}(R_{*})=p_{(Y,G)}(F(R_{*})\cap F(R^{*})^{\circ})p_{(X,E)}^{\circ}=p_{(Y,G)}F(R)p_{(X,E)}^{\circ}$ as relations in $\ca{F}$.
	
	Now let's show that $\overline{F}$ preserves finite limits. First, it is clear that it preserves the terminal object $\Gamma1$, since $\overline{F}\Gamma1=F1$ and $F$ preserves the terminal object. Second, the preservation of binary products follows from the fact that exact sequences in any regular category are stable under binary products. It suffices then to prove the preservation of inserters. So suppose that $\begin{tikzcd}(A,T)\ar[r,tail,"\Phi_{*}"] & (X,E)\ar[r,shift left=0.75ex,"R_{*}"]\ar[r,shift right=0.75ex,"S_{*}"'] & (Y,G)\end{tikzcd}$ is an inserter diagram in $\ca{E}_{ex/reg}$. Recall from \ref{finite limits in ex/reg} that by construction of inserters in $\ca{E}_{ex/reg}$ this means that $\Phi\Phi^{\circ}=S^{*}R_{*}\cap(E\cap E^{\circ})$ and $\Phi^{*}\Phi_{*}=T$ as relations in $\ca{E}$. Then first of all we have 
	\begin{eqnarray*}
	\overline{F}(\Phi_{*})^{*}\overline{F}(\Phi_{*})_{*}& = & p_{(A,T)*}F(\Phi^{*})p_{(X,E)}^{*}p_{(X,E)*}F(\Phi_{*})p_{(A,T)}^{*} \\
	       & = & p_{(A,T)*}F(\Phi^{*})F(E)F(\Phi_{*})p_{(A,T)}^{*} \\
	       & = & p_{(A,T)*}F(\Phi^{*}E\Phi_{*})p_{(A,T)}^{*} \\
	       & = & p_{(A,T)*}F(\Phi^{*}\Phi_{*})p_{(A,T)}^{*} \\
	       & = & p_{(A,T)*}F(T)p_{(A,T)}^{*}=I_{\overline{F}(A,T)}
	\end{eqnarray*}
	which tells us that $\overline{F}(\Phi_{*})$ is an $\nc{ff}$-morphism in $\ca{F}$. Second, we have the following sequence of calculations:
	\begin{align*}
		&\overline{F}(S_{*})^{*}\overline{F}(R_{*})_{*}\cap \Delta_{\overline{F}(X,E)}= \\
		& =  p_{(X,E)}p_{(X,E)}^{\circ}(\overline{F}(S_{*})^{*}\overline{F}(R_{*})_{*}\cap \Delta_{\overline{F}(X,E)})p_{(X,E)}p_{(X,E)}^{\circ} \\
		& =  p_{(X,E)}p_{(X,E)}^{\circ}(p_{(X,E)*}F(S^{*})p_{(Y,G)}^{*}p_{(Y,G)*}F(R_{*})p_{(X,E)}^{*}\cap\Delta_{\overline{F}(X,E)})p_{(X,E)}p_{(X,E)}^{\circ} \\
		   & =  p_{(X,E)}p_{(X,E)}^{\circ}(p_{(X,E)}F(E)F(S^{*})F(G)F(R_{*})F(E)p_{(X,E)}^{\circ}\cap\Delta_{\overline{F}(X,E)})p_{(X,E)}p_{(X,E)}^{\circ} \\
		   & =  p_{(X,E)}p_{(X,E)}^{\circ}(p_{(X,E)}F(ES^{*}GR_{*}E)p_{(X,E)}^{\circ}\cap\Delta_{\overline{F}(X,E)})p_{(X,E)}p_{(X,E)}^{\circ} \\
		   & =  p_{(X,E)}p_{(X,E)}^{\circ}(p_{(X,E)}F(S^{*}R_{*})p_{(X,E)}^{\circ}\cap\Delta_{\overline{F}(X,E)})p_{(X,E)}p_{(X,E)}^{\circ} \\
		   & \stackrel{\ref{Map distributivity},\ref{Map distributivity*}}{=}  p_{(X,E)}[p_{(X,E)}^{\circ}p_{(X,E)}F(S^{*}R_{*})p_{(X,E)}^{\circ}p_{(X,E)}\cap p_{(X,E)}^{\circ}p_{(X,E)}]p_{(X,E)}^{\circ} \\
		   & =  p_{(X,E)}[F(E\cap E^{\circ})F(S^{*}R_{*})F(E\cap E^{\circ})\cap F(E\cap E^{\circ})]p_{(X,E)}^{\circ} \\
		   & =  p_{(X,E)}(F(S^{*}R_{*})\cap F(E\cap E^{\circ}))p_{(X,E)}^{\circ} \\
		   & =  p_{(X,E)}F(S^{*}R_{*}\cap E\cap E^{\circ})p_{(X,E)}^{\circ} \\
		   & =  p_{(X,E)}F(\Phi\Phi^{\circ})p_{(X,E)}^{\circ} \\
		   & =  p_{(X,E)}F(\Phi)F(T\cap T^{\circ})F(\Phi^{\circ})p_{(X,E)}^{\circ} \\
		   & =  p_{(X,E)}F(\Phi)p_{(A,T)}^{\circ}p_{(A,T)}F(\Phi^{\circ})p_{(X,E)}^{\circ} \\
		   & =  \overline{F}(\Phi_{*})\overline{F}(\Phi_{*})^{\circ}
	\end{align*}

	These tell us that $\begin{tikzcd}\overline{F}(A,T)\ar[r,tail,"\overline{F}(\Phi_{*})"] & \overline{F}(X,E)\ar[r,shift left=0.75ex,"\overline{F}(R_{*})"]\ar[r,shift right=0.75ex,"\overline{F}(S_{*})"'] & \overline{F}(Y,G)\end{tikzcd}$ is an inserter diagram in $\ca{F}$.
	
	Next, consider an $\nc{so}$-morphism $R_{*}\colon(X,E)\twoheadrightarrow(Y,G)\in\ca{E}_{ex/reg}$. This means that $R_{*}R^{*}=G$ and then we have
	\begin{eqnarray*}
		\overline{F}(R_{*})_{*}\overline{F}(R_{*})^{*} & = & p_{(Y,G)*}F(R_{*})p_{(X,E)}^{*}p_{(X,E)*}F(R^{*})p_{(Y,G)}^{*} \\
		& = & p_{(Y,G)*}F(R_{*})F(E)F(R^{*})p_{(Y,G)}^{*} \\
		& = & p_{(Y,G)*}F(R_{*}R^{*})p_{(Y,G)}^{*}=p_{(Y,G)*}F(G)p_{(Y,G)}^{*} \\
		& = & I_{\overline{F}(Y,G)}
	\end{eqnarray*}
	from which we obtain that $\overline{F}(R_{*})$ is an $\nc{so}$-morphism in $\ca{F}$. Thus, we have proved that $\overline{F}$ is a regular functor.
	
	Finally, for any regular functor $H\colon\ca{E}_{ex/reg}\to\ca{F}$, for every object $(X,E)\in\ca{E}_{ex/reg}$ we must have an exact sequence $\xy\xymatrix{H\Gamma E\ar@<1ex>[r]\ar@<-1ex>[r] & H\Gamma X\ar@{->>}[r] & H(X,E)}\endxy$ in $\ca{F}$. If $H\Gamma\cong F$, this forces $H\cong\overline{F}$.
\end{proof}

\section{A Characterization of the Exact Completion and Priestley Spaces}

Having established the universal property of the exact completion, in this section we present a result which identifies the situation in which an exact category is the exact completion of a given regular category $\ca{E}$. More precisely, we will characterize the canonical functor $\Gamma\colon\ca{E}\to\ca{E}_{ex/reg}$ as the unique up to equivalence functor from $\ca{E}$ into an exact category which satisfies some simple properties.  This will in turn allow us to easily deduce some examples of categories which arise as exact completions of some familiar regular subcategory. 

The main example that we aim to cover here involves the category of \emph{Priestley} spaces. Indeed, the latter is regular as a $\nc{Pos}$-category and we prove that its exact completion is the category of \emph{compact ordered spaces} (or \emph{Nachbin} spaces). This provides an ordered version of the folklore result which identifies the category of compact Hausdorff spaces as the exact completion (in the ordinary sense) of the regular category of \emph{Stone} spaces (see e.g. \cite{Panagis & me}). 

But first, we need some preliminaries.
We will say that a functor $F\colon\ca{C}\to\ca{D}$ is \emph{order-faithful} if, for every $f,g\colon X\to Y\in\ca{C}$ we have $Ff\leq Fg\implies f\leq g$. In other words, $F$ is order-faithful if for every $X,Y\in\ca{C}$ the morphism $\ca{C}(X,Y)\to\ca{D}(FX,FY)$ is an $\nc{ff}$-morphism in $\nc{Pos}$. Note in particular that such a functor is faithful in the ordinary sense or, in more appropriate language, the underlying functor between ordinary categories is faithful. In fact, if $\ca{C}$ has inserters which are preserved by $F\colon\ca{C}\to\ca{D}$, then the two notions coincide.

The crux of the work now consists of establishing that certain properties of a regular functor $F\colon\ca{E}\to\ca{F}$ into an exact category $\ca{F}$ translate to corresponding properties of the induced $\overline{F}\colon\ca{E}_{ex/reg}\to\ca{F}$.

\begin{lemma}
	Let $F\colon\ca{E}\to\ca{F}$ be a regular functor with $\ca{F}$ an exact category. If $F$ is fully order-faithful, then $\overline{F}\colon\ca{E}_{ex/reg}\to\ca{F}$ is order-faithful.
\end{lemma}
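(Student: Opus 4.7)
The plan is a direct computation in the calculus of relations, culminating in a descent step that invokes ordinary full faithfulness of $F$. From the proof of \ref{Universal property of ex/reg} I have at my disposal, for any morphism $H_*\colon(X,E)\to(Y,G)\in\ca{E}_{ex/reg}$, the formula $\overline{F}(H_*)_* = p_{(Y,G)*}F(H_*)p_{(X,E)}^*$ as a weakening-closed relation in $\ca{F}$, where $p_{(X,E)}\colon FX\twoheadrightarrow\overline{F}(X,E)$ is the coinserter of $F(E)$ and analogously for $p_{(Y,G)}$. Because $\ca{F}$ is exact and $F$ preserves congruences, $F(E)$ is effective in $\ca{F}$ and is the kernel congruence of $p_{(X,E)}$, so $p_{(X,E)}^*p_{(X,E)*}=F(E)$ and $p_{(X,E)*}p_{(X,E)}^* = I_{\overline{F}(X,E)}$, and dually for $p_{(Y,G)}$.

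Now suppose $R_*, S_*\colon(X,E)\to (Y,G)$ satisfy $\overline{F}(R_*)\leq\overline{F}(S_*)$ in $\ca{F}$. Unpacking the order on morphisms of a regular category as reverse inclusion of hypergraphs, this says
\[p_{(Y,G)*}F(S_*)p_{(X,E)}^* \subseteq p_{(Y,G)*}F(R_*)p_{(X,E)}^*\]
in $\mathrm{Rel}_w(\ca{F})$. Composing on the left with $p_{(Y,G)}^*$ and on the right with $p_{(X,E)*}$ and using the identities above together with $GS_*E = S_*$, $GR_*E = R_*$ and the fact that regular functors preserve composition of relations, I obtain $F(S_*)\subseteq F(R_*)$ as relations in $\ca{F}$.

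It remains to lift this inclusion from $\ca{F}$ back to $\ca{E}$. Represent $R_*$ and $S_*$ respectively by $\nc{ff}$-morphisms $\langle r_0,r_1\rangle\colon R\rightarrowtail X\times Y$ and $\langle s_0,s_1\rangle\colon S\rightarrowtail X\times Y$ in $\ca{E}$. Since $F$ preserves finite products and $\nc{ff}$-morphisms, the relations $F(R_*)$ and $F(S_*)$ are represented by $\langle Fr_0,Fr_1\rangle$ and $\langle Fs_0,Fs_1\rangle$, so the inclusion $F(S_*)\subseteq F(R_*)$ amounts to the existence of a morphism $FS\to FR$ in $\ca{F}$ making the evident triangle over $FX\times FY$ commute. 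Full faithfulness of $F$ (which is part of being fully order-faithful) lifts this to a morphism $S\to R$ in $\ca{E}$, and faithfulness of $F$ upgrades the commutation to the corresponding triangle over $X\times Y$. Hence $S_*\subseteq R_*$ as relations in $\ca{E}$, which by the very definition of the order on $\ca{E}_{ex/reg}$ is precisely $R_*\leq S_*$.

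The only delicate point is the final descent step, where it is essential that $F$ be fully faithful in the ordinary sense rather than merely order-faithful: the reflection of the order in $\ca{E}_{ex/reg}$ comes from the existence of a factorization morphism between representing objects, and only ordinary full faithfulness of $F$ can detect such a morphism from its image in $\ca{F}$.
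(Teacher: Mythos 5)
Your proof is correct and follows essentially the same route as the paper: the same composition with $p_{(Y,G)}^{*}$ and $p_{(X,E)*}$ using $p^{*}p_{*}=F(E)$, $F(G)$ and $GR_{*}E=R_{*}$ to reduce to $F(S_{*})\subseteq F(R_{*})$, followed by the observation that a fully faithful regular functor reflects inclusions of subobjects. The only difference is that you spell out this last reflection step explicitly via representing $\nc{ff}$-morphisms, which the paper leaves implicit.
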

\begin{proof}
	Let $R_{*},S_{*}\colon(X,E)\to(Y,G)\in\ca{E}_{ex/reg}$ be such that $\overline{F}(R_{*})\leq\overline{F}(S_{*})$ in $\ca{F}$. Then using the definition of $\overline{F}$ we have
	\begin{eqnarray*}
		\overline{F}(R_{*})_{*}\supseteq\overline{F}(S_{*})_{*} & \implies & p_{(Y,G)}^{*}\overline{F}(R_{*})_{*}p_{(X,E)*}\supseteq p_{(Y,G)}^{*}\overline{F}(S_{*})_{*}p_{(X,E)*} \\
		& \implies & p_{(Y,G)}^{*}p_{(Y,G)*}F(R_{*})p_{(X,E)}^{*}p_{(X,E)*}\supseteq \\
		& \supseteq & p_{(Y,G)}^{*}p_{(Y,G)*}F(S_{*})p_{(X,E)}^{*}p_{(X,E)*} \\
		& \implies & F(G)F(R_{*})F(E)\supseteq F(G)F(S_{*})F(E) \\
		& \implies & F(R_{*})\supseteq F(S_{*})
	\end{eqnarray*}
	But since $F$ is fully (order-) faithful, it reflects inclusions of subobjects and hence we obtain $R_{*}\supseteq S_{*}$, i.e. $R_{*}\leq S_{*}$ in $\ca{E}_{ex/reg}$.
\end{proof}

We introduce some further properties of functors that will be of interest. Our choice of terminology follows the literature of Categorical Logic (e.g.\cite{Makkai-Reyes}).

\begin{definition}
	A functor $F\colon\ca{C}\to\ca{D}$ is called \emph{covering} if, for every object $Y\in\ca{D}$, one can find an object $X\in\ca{C}$ and an effective  epimorphism $FX\twoheadrightarrow Y$.
	
	We say that $F$ is \emph{full on subobjects} if, for every $\nc{ff}$-morphism $B\rightarrowtail FX$ in $\ca{D}$, there exists an $\nc{ff}$-morphism $A\rightarrowtail X$ in $\ca{C}$ such that $FA\cong B$ in $\mathrm{Sub}_{\ca{D}}$$(FX)$.
\end{definition}

The following basic observation (even for ordinary categories) seems to not have appeared explicitly in the literature. Since we will need it below, we give its easy proof.

\begin{lemma}
	Let $F\colon\ca{C}\to\ca{D}$ be a regular functor between regular categories. If $F$ is full and covering, then it is full on subobjects.
\end{lemma}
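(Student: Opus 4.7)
The plan is to exploit the covering property to reduce any $\nc{ff}$-subobject of $FX$ to a morphism lying in the image of $F$, and then transport the $(\nc{so},\nc{ff})$-factorization in $\ca{C}$ across $F$ to exhibit the required preimage subobject.

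Specifically, given an $\nc{ff}$-morphism $m\colon B\rightarrowtail FX$ in $\ca{D}$, I would first invoke the covering property to obtain an object $Y\in\ca{C}$ together with an effective epimorphism $e\colon FY\twoheadrightarrow B$ in $\ca{D}$. The composite $me\colon FY\to FX$ is then a morphism in $\ca{D}$ with source and target in the image of $F$, so fullness of $F$ supplies a morphism $f\colon Y\to X$ in $\ca{C}$ with $Ff=me$.

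Next, using (R2) in $\ca{C}$, I would $(\nc{so},\nc{ff})$-factor $f$ as $Y\stackrel{q}{\twoheadrightarrow}A\stackrel{i}{\rightarrowtail}X$. Since $F$ is regular it preserves both classes, so applying it yields an $(\nc{so},\nc{ff})$-factorization $FY\stackrel{Fq}{\twoheadrightarrow}FA\stackrel{Fi}{\rightarrowtail}FX$ of $Ff=me$. On the other hand, $e$ is an effective epimorphism in $\ca{D}$, hence in particular an $\nc{so}$-morphism, and $m$ is $\nc{ff}$ by assumption, so $FY\stackrel{e}{\twoheadrightarrow}B\stackrel{m}{\rightarrowtail}FX$ is another $(\nc{so},\nc{ff})$-factorization of the same morphism $me$. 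By essential uniqueness of such factorizations there is an isomorphism $FA\cong B$ compatible with $Fi$ and $m$, which is precisely the statement that $FA\cong B$ in $\mathrm{Sub}_{\ca{D}}(FX)$, and hence that $i\colon A\rightarrowtail X$ is the desired preimage of $m$.

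There is no real obstacle here; the argument is direct once the right ingredients are lined up. The only point that requires a moment's attention is the transition from $e$ being an effective epimorphism to its being an $\nc{so}$-morphism, which is guaranteed by the earlier lemma showing that every coinserter is an $\nc{so}$-morphism, together with the uniqueness of $(\nc{so},\nc{ff})$-factorizations in any category equipped with such a factorization system.
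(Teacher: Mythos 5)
Your argument is correct and is essentially the same as the paper's: cover the subobject by an effective epimorphism from some $FY$, use fullness to lift the composite to $\ca{C}$, factor there, and compare the two $(\nc{so},\nc{ff})$-factorizations of the same morphism in $\ca{D}$. Your explicit remark that effective epimorphisms are $\nc{so}$-morphisms is a fine (and in a regular category automatic) way to justify the step the paper takes silently by writing the covering map as an $\nc{so}$-morphism directly.
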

\begin{proof}
	Consider an $\nc{ff}$-morphism $v\colon D\rightarrowtail FY$ in $\ca{D}$. Since $F$ is covering, there exists some $\nc{so}$-morphism $q\colon FX\twoheadrightarrow D$ in $\ca{D}$. Now consider the composition of the two, $\xy\xymatrix{FX\ar@{->>}[r]^{q} & D\ar@{>->}[r]^{v} & FY}\endxy$. Since $F$ is full, there is a morphism $f\colon X\to Y$ in $\ca{C}$ such that $Ff=vq$.
	
	Since $\ca{C}$ is regular, we can factor $f$ as an $\nc{so}$ followed by an $\nc{ff}$-morphism, say $f=\xy\xymatrix{X\ar@{->>}[r]^{p} & I\ar@{>->}[r]^{u} & Y}\endxy$. But $F$ is a regular functor, so $\xy\xymatrix{FX\ar@{->>}[r]^{Fp} & FI\ar@{>->}[r]^{Fu} & FY}\endxy$ is the ($\nc{so}$,$\nc{ff}$) factorization of $Ff=vq$. By uniqueness of such factorizations in the regular category $\ca{D}$, we deduce that $FI\cong D$ as subobjects of $FY$.
\end{proof}

\begin{proposition}
	Let $F\colon\ca{E}\to\ca{F}$ be a regular functor with $\ca{F}$ an exact category. If $F$ is fully order-faithful and covering, then $\overline{F}\colon\ca{E}_{ex/reg}\to\ca{F}$ is fully order-faithful and covering.
\end{proposition}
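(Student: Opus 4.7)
The plan is to reduce everything to two key properties of $F$: first, that $F$ is full on subobjects, which follows from the preceding lemma applied to the hypotheses that $F$ is full and covering; second, that $F$ reflects inclusions of subobjects, a short consequence of $F$ being fully order-faithful (given $FA\leq FB$ as subobjects of $FX$ represented by $\nc{ff}$-morphisms $m,n$, lift the comparison $FA\to FB$ along fullness to some $\tilde h\colon A\to B$ in $\ca{E}$; then $F(n\tilde h)=Fm$ forces $n\tilde h=m$ by order-faithfulness applied in both directions). Order-faithfulness of $\overline{F}$ is already given by the previous lemma, and covering is immediate: any $\nc{so}$-morphism $FX\twoheadrightarrow Y$ in $\ca{F}$ (which exists by hypothesis on $F$) is simultaneously an $\nc{so}$-morphism $\overline{F}\Gamma X\twoheadrightarrow Y$, witnessing the cover by $\Gamma X\in\ca{E}_{ex/reg}$.

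The substance of the proof is fullness. Given $g\colon\overline{F}(X,E)\to\overline{F}(Y,G)$ in $\ca{F}$, I form the two weakening-closed relations in $\ca{F}$
\begin{displaymath}
\phi\;:=\;p_{(Y,G)}^{*}\,g_{*}\,p_{(X,E)*}\colon FX\looparrowright FY,\qquad \psi\;:=\;p_{(X,E)}^{*}\,g^{*}\,p_{(Y,G)*}\colon FY\looparrowright FX.
\end{displaymath}
Since $F$ preserves the products $X\times Y$ and $Y\times X$ and is full on subobjects, I lift $\phi$ and $\psi$ to relations $R\colon X\looparrowright Y$ and $R'\colon Y\looparrowright X$ in $\ca{E}$ with $F(R)\cong\phi$ and $F(R')\cong\psi$ as subobjects. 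The claim is that $R$ becomes a map $(X,E)\to (Y,G)$ in $\ca{E}_{ex/reg}$ with right adjoint $R'$, and that $\overline{F}$ sends it to $g$.

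Because $F$ preserves composition and finite meets of relations and reflects subobject inclusions, each of the four conditions making $R$ such a map --- $GRE=R$, $ER'G=R'$, $R'R\supseteq E$, $RR'\subseteq G$ --- reduces to a corresponding identity or inclusion among $\phi,\psi,F(E),F(G)$ in $\ca{F}$. These in turn are immediate from the identities $p_{(-)*}\,p_{(-)}^{*}=I$ (because each $p_{(-)}$ is an $\nc{so}$-morphism) and $p_{(-)}^{*}\,p_{(-)*}=F(-)$ (the kernel congruence), combined with the adjunction $g_{*}\dashv g^{*}$; for example
\begin{displaymath}
\psi\phi\;=\;p_{(X,E)}^{*}\bigl(g^{*}\,p_{(Y,G)*}p_{(Y,G)}^{*}\,g_{*}\bigr)p_{(X,E)*}\;=\;p_{(X,E)}^{*}\bigl(g^{*}g_{*}\bigr)p_{(X,E)*}\;\supseteq\;p_{(X,E)}^{*}p_{(X,E)*}\;=\;F(E).
\end{displaymath}
Finally, $\overline{F}(R)_{*}=p_{(Y,G)*}F(R)p_{(X,E)}^{*}=p_{(Y,G)*}\phi\,p_{(X,E)}^{*}$ collapses by the same identities to $g_{*}$, so $\overline{F}(R)=g$. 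The only mild obstacle is the bookkeeping in these relational calculations; the construction of $\phi$ and $\psi$ itself is forced, since any lift $R$ with $\overline{F}(R)=g$ must satisfy $F(R)=p_{(Y,G)}^{*}\overline{F}(R)_{*}p_{(X,E)*}=\phi$.
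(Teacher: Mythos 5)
Your proposal is correct and follows essentially the same route as the paper: order-faithfulness and covering come from the preceding lemmas, and fullness is obtained by transporting $g$ to the pair of relations $p_{(Y,G)}^{*}g_{*}p_{(X,E)*}$ and $p_{(X,E)}^{*}g^{*}p_{(Y,G)*}$, lifting these along $F$ using fullness on subobjects, and then using reflection of subobject inclusions to verify the adjunction conditions in $\ca{E}_{ex/reg}$. The only cosmetic differences are that you spell out why $F$ reflects inclusions of subobjects and display one of the relational computations explicitly, neither of which changes the argument.
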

\begin{proof}
	We saw earlier that $F$ being fully order-faithful implies that $\overline{F}$ is order-faithful. Furthermore, it is immediate that $F$ being covering implies the same property for $\overline{F}$, since we have $\overline{F}\Gamma\cong F$. 
	
	Now consider any morphism $g\colon\overline{F}(X,E)\to\overline{F}(Y,G)\in\ca{F}$. Let $S_{*}\colon FX\looparrowright FY$ and $S^{*}\colon FY\looparrowright FX$ denote the relations corresponding to this morphism via the bijection of \ref{morphisms between quotients of congruences}, i.e. the relations $S_{*}=p_{(Y,G)}^{*}g_{*}p_{(X,E)*}$ and $S^{*}=p_{(X,E)}^{*}g^{*}p_{(Y,G)*}$. Now since $F$ is a full and covering regular functor, we know by the previous lemma that it is also full on subobjects and so there exist relations $R_{*}\colon X\looparrowright Y$ and $R^{*}\colon Y\looparrowright X$ in $\ca{E}$ such that $F(R_{*})=S_{*}$ and $F(R^{*})=S^{*}$. Furthermore, we have the following:
	\begin{equation*}
	F(GR_{*}E)=F(G)F(R_{*})F(E)=F(G)S_{*}F(E)=S_{*}=F(R_{*})
	\end{equation*}
	\begin{equation*}
	F(R^{*}R_{*})=F(R^{*})F(R_{*})=S^{*}S_{*}\supseteq F(E)
	\end{equation*}
	\begin{equation*}
	F(R_{*}R^{*})=F(R_{*})F(R^{*})=S_{*}S^{*}\subseteq F(G)
	\end{equation*}
	But now because $F$ is fully (order-) faithful it reflects inclusions of subobjects. Thus, we deduce that $GR_{*}E=R_{*}$, $R^{*}R_{*}\supseteq E$ and $R_{*}R^{*}\subseteq G$, so that $R_{*}$ is a morphism $(X,E)\to(Y,G)\in\ca{E}_{ex/reg}$.
	
	Finally, we have by definition of the functor $\overline{F}$ that
	\begin{align*}
	\overline{F}(R_{*})_{*} &=p_{(Y,G)*}F(R_{*})p_{(X,E)}^{*}=p_{(Y,G)*}S_{*}p_{(X,E)}^{*}=p_{(Y,G)*}p_{(Y,G)}^{*}g_{*}p_{(X,E)*}p_{(X,E)}^{*} \\
	&=I_{\overline{F}(Y,G)}g_{*}I_{\overline{F}(X,E)}=g_{*}
	\end{align*}
	and hence $\overline{F}(R_{*})=g$.
\end{proof}

The final ingredient we need is the $\nc{Pos}$-enriched analogue of Lemma 1.4.9 from \cite{Makkai-Reyes}.

\begin{lemma}
	Let $F\colon\ca{E}\to\ca{F}$ be a regular functor between regular categories where moreover $\ca{E}$ is exact. If $F$ is fully order-faithful and covering, then it is an equivalence.
\end{lemma}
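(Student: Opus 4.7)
The plan is to establish essential surjectivity of $F$ on objects, since fully order-faithfulness will then upgrade $F$ to an equivalence of $\nc{Pos}$-categories. Given $Y\in\ca{F}$, the covering hypothesis yields an $\nc{so}$-morphism $q\colon FX\twoheadrightarrow Y$ for some $X\in\ca{E}$. Since $\ca{F}$ is regular, $q$ is effective, and in particular is the coinserter of its kernel congruence $q/q$, which is a congruence on $FX$ in the sense of $\mathrm{Rel}_{w}(\ca{F})$. The goal is to realize $q/q$ as $F(E)$ for a congruence $E$ on $X$ in $\ca{E}$ and then quotient by $E$ inside $\ca{E}$.

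To produce $E$, I would invoke the lemma preceding this one: since $F$ is regular, fully order-faithful (hence in particular full) and covering, it is full on subobjects. Applying this to the $\nc{ff}$-morphism $\langle q_0,q_1\rangle\colon q/q\rightarrowtail FX\times FX\cong F(X\times X)$ gives an $\nc{ff}$-morphism $\langle e_0,e_1\rangle\colon E\rightarrowtail X\times X$ in $\ca{E}$ with $F(E)\cong q/q$ as subobjects. I would then verify that $E$ is a congruence on $X$: since $F$ preserves finite limits, it preserves the comma objects $I_{(-)}$ and the composition of relations; since $F$ is fully order-faithful it reflects inclusions and equalities of subobjects of a product. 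Therefore from $F(E)\supseteq I_{FX}=F(I_X)$, $F(E)F(E)\subseteq F(E)$, and $I_{FX}F(E)I_{FX}=F(E)$ one concludes respectively reflexivity, transitivity, and weakening-closedness of $E$ itself.

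Now exactness of $\ca{E}$ gives a morphism $p\colon X\to P$ realizing $E=p/p$, and by \ref{Kernel Congruences and Coinserters} I can choose $p$ to be the coinserter of $E$, in particular an $\nc{so}$-morphism. Applying the regular functor $F$ yields an $\nc{so}$-morphism $F(p)\colon FX\twoheadrightarrow FP$ whose kernel congruence is $F(p)/F(p)=F(p/p)=F(E)\cong q/q$. Since in the regular category $\ca{F}$ every $\nc{so}$-morphism is the coinserter of its kernel congruence, both $q$ and $F(p)$ are coinserters of the same congruence on $FX$, and hence their codomains are canonically isomorphic: $Y\cong FP$. This establishes essential surjectivity and finishes the proof.

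The main subtlety I expect is the verification that $E$ is genuinely a congruence in $\ca{E}$, rather than merely a relation whose image under $F$ is one; this requires careful use of the fact that fully order-faithfulness of $F$, combined with preservation of finite limits (comma objects in particular) and of the calculus of relations, allows us to pull back the congruence axioms from $F(E)$ to $E$. Everything else is a routine matching up of coinserters with the same kernel congruence, given the results already developed in the paper.
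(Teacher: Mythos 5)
Your proposal is correct and follows essentially the same route as the paper's own proof: covering gives $q\colon FX\twoheadrightarrow Y$, fullness on subobjects lifts $q/q$ to a relation $E$ on $X$, order-faithfulness transfers the congruence axioms back to $E$, and exactness of $\ca{E}$ plus regularity of $F$ identifies $Y$ with $FP$ via the two coinserters of $q/q$. Your extra care in spelling out why $E$ is a congruence only fills in a step the paper states more tersely.
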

\begin{proof}
	It suffices to show that $F$ is essentially surjective on objects. So let $Y\in\ca{F}$. By the assumption that $F$ is covering, we can find a coinserter $q\colon FX\twoheadrightarrow Y$ in $\ca{F}$ for some object $X\in\ca{E}$. Consider the kernel congruence $\xy\xymatrix{q/q\ar@<1ex>[r]^{q_{0}}\ar@<-1ex>[r]_{q_{1}} & FX}\endxy$ in $\ca{F}$. Since $F$ is a covering and full regular functor, we know that it must be also full on subobjects. In particular, there is a relation $\xy\xymatrix{E\ar@<1ex>[r]^{e_{0}}\ar@<-1ex>[r]_{e_{1}} & X}\endxy$ in $\ca{E}$ such that $F(E)=q/q$ as relations in $\ca{F}$.
	
	Now because $F$ is order-faithful, $F(E)=q/q$ being a congruence in $\ca{F}$ implies that $E$ is a congruence on $X$ in $\ca{E}$. Since $\ca{E}$ is assumed to be exact, $E$ has a coinserter, say $p\colon X\to P$, and is the kernel congruence of that coinserter. Now by regularity of the functor $F$ we have that $Fp\colon FX\twoheadrightarrow FP$ is the coinserter of $F(E)=q/q$. Since $q\colon FX\to Y$ is also a coinserter of $q/q$, we deduce that there exists an iso $FP\cong Y$.
\end{proof}

Now putting everything together we have proved the following result.

\begin{theorem}\label{Characterization of ex/reg}
	Let $F\colon\ca{E}\to\ca{F}$ be a regular functor with $\ca{F}$ an exact category. Then $\overline{F}\colon\ca{E}_{ex/reg}\to\ca{F}$ is an equivalence if and only if $F$ is fully order-faithful and covering.
\end{theorem}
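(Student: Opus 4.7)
My plan is to prove the biconditional by assembling the preceding propositions and lemmas, with the sufficiency direction being essentially immediate and the necessity direction requiring a short argument based on the canonical presentation of objects of $\ca{E}_{ex/reg}$.

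For sufficiency, I would simply chain the last two preparatory results. Assuming $F$ is fully order-faithful and covering, the penultimate proposition transfers both properties from $F$ to $\overline{F}$. The preceding lemma, tailored for exactly this situation, then asserts that any fully order-faithful, covering, regular functor whose domain is exact is an equivalence; since $\ca{E}_{ex/reg}$ was proved exact earlier in the section, one applies this lemma with $\overline{F}$ itself in the role of the functor and concludes.

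For necessity, I would exploit the defining isomorphism $\overline{F}\circ\Gamma\cong F$. The canonical embedding $\Gamma\colon\ca{E}\to\ca{E}_{ex/reg}$ is fully order-faithful: order-preservation and order-reflection are built into the definition of the partial order on hom-posets of $\ca{E}_{ex/reg}$, while fullness is exactly \ref{Pos-enriched maps are morphisms} specialized to objects of the form $(X,I_X)$. Any $\nc{Pos}$-enriched equivalence preserves and reflects the enriched hom-structure, so composing $\Gamma$ with $\overline{F}$ keeps the property intact and $F$ is fully order-faithful. To obtain covering, I would fix $Y\in\ca{F}$, use essential surjectivity of $\overline{F}$ to find $(X,E)\in\ca{E}_{ex/reg}$ with $\overline{F}(X,E)\cong Y$, and then invoke the canonical exact sequence
\begin{displaymath}
\begin{tikzcd}\Gamma E\ar[r,shift left=0.5ex,"\Gamma e_{0}"]\ar[r,shift right=0.5ex,"\Gamma e_{1}"'] & \Gamma X\ar[r,two heads,"E_{*}"] & (X,E)\end{tikzcd}
\end{displaymath}
established just before the universal property theorem. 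Applying the regular functor $\overline{F}$ to the $\nc{so}$-morphism $E_{*}$ produces an $\nc{so}$-morphism $FX=\overline{F}\Gamma X\twoheadrightarrow\overline{F}(X,E)\cong Y$ in $\ca{F}$, and since $\nc{so}$-morphisms coincide with effective morphisms in a regular category by \ref{simplifying regularity}, this is the required effective epimorphism.

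The main obstacle, if any, is purely conceptual rather than technical: one must interpret equivalence in the $\nc{Pos}$-enriched sense so that $\overline{F}$ transports the order on hom-posets faithfully in both directions, which is what makes the transfer of order-faithfulness from $\overline{F}$ to $F$ legitimate. All the genuine work has already been done in the preceding two lemmas and proposition, so the present theorem is essentially their recombination.
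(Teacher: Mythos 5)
Your proposal is correct and follows essentially the same route as the paper, which proves the theorem by "putting together" exactly the preceding lemmas and proposition for the sufficiency direction. Your necessity argument (transferring full order-faithfulness through $\Gamma$ and using the canonical exact sequence $\Gamma E\rightrightarrows\Gamma X\twoheadrightarrow(X,E)$ to get covering) correctly supplies the converse that the paper leaves implicit.
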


In other words, if have found a regular functor $F\colon\ca{E}\to\ca{F}$ into an exact category $\ca{F}$ and $F$ is fully order-faithful and covering, then we have identified $\ca{F}$ as $\ca{E}_{ex/reg}$. This can be immediately applied to produce some examples of categories which are exact completions of regular categories.

\begin{example}
	\begin{enumerate}
		\item Consider $\nc{Set}$ viewed as $\nc{Pos}$-category with discrete hom-sets. We have seen that it is regular but not exact. The discrete poset functor $D\colon\nc{Set}\to\nc{Pos}$ is clearly fully order-faithful and regular. It is also trivially covering, since for any poset $(X,\leq)$ we have an order-preserving surjection $(X,=)\twoheadrightarrow(X,\leq)$. Thus, by \ref{Characterization of ex/reg} we have that $\nc{Set}_{ex/reg}\simeq\nc{Pos}$.
		\item Consider the category $\nc{OrdMon}$ of ordered monoids, which is a variety of ordered algebras in the sense of Bloom \& Wright\cite{Bloom & Wright} and hence is an exact category\cite{Kurz-Velebil}. Similarly, the category $\nc{OrdMon}_{can}$ of cancellative ordered monoids is regular, since it is an ordered quasivariety. Then by \ref{Characterization of ex/reg} we see that $(\nc{OrdMon}_{can})_{ex/reg}\simeq\nc{OrdMon}$. Indeed, every ordered monoid admits a surjective homomorphisms from a free one and clearly every free ordered monoid is cancellative. Recall here that the free ordered monoid $F(X,\leq)$ on a poset $(X,\leq)$ has elements all finite lists $(x_{1},x_{2},...,x_{n})$ of elements of $X$, with $(x_{1},x_{2},...,x_{n})\leq (y_{1},y_{2},...,y_{m})$ in $F(X,\leq)$ if and only if $m=n$ and $x_{i}\leq y_{i}$ for all $i\in\{1,2,...,n\}$.
		\item The ordinary category $\nc{Mon}$ of monoids is regular, hence also regular as a locally discrete $\nc{Pos}$-category. The inclusion functor $\nc{Mon}\hookrightarrow\nc{OrdMon}$ is regular and any ordered monoid $(M,\leq)$ admits a surjective homomorphism $(M,=)\twoheadrightarrow(M,\leq)$. It follows then from \ref{Characterization of ex/reg} that $\nc{Mon}_{ex/reg}\simeq\nc{OrdMon}$.
		\item It is easy to see that in the above example there is nothing special about the variety of monoids. Indeed, any ordinary quasivariety gives rise to a corresponding quasivariety of ordered algebras defined by the same set of axioms. The ordinary (unordered) version sits inside the ordered one as the discrete ordered algebras and as such is a regular subcategory. It follows then that its exact completion qua $\nc{Pos}$-category yields precisely the corresponding ordered quasivariety. Thus, for example, in the case of semigroups we similarly have $\nc{SGrp}_{ex/reg}\simeq\nc{OrdSGrp}$.
	\end{enumerate}
\end{example}

The examples presented so far of exact completions have all been varieties of ordered algebras which appear as completions of certain corresponding quasivarieties. However, the main example we would like to present in this section is order-topological in nature and involves the category of Priestley spaces. Let us thus first recall some terminology.

We will say that a triple $(X,\tau,\leq)$ with $\tau$ a topology and $\leq$ a partial order relation on the set $X$ is an \emph{ordered topological space}. 

A \emph{compact ordered space} is an ordered topological space $(X,\tau,\leq)$ such that $(X,\tau)$ is compact and $\leq$ is closed as a subspace of $X\times X$. This class of spaces was introduced and developed by L. Nachbin in \cite{Nachbin} as an ordered analogue of compact Hausdorff spaces, and so we will also call these \emph{Nachbin} spaces. Together with the continuous order-preserving functions between them they form a category which we denote by $\nc{Nach}$. Note that, under the assumption of compactness, the condition that the order relation be closed in the product space $X\times X$ means the following: whenever $x,y\in X$ with $x\nleq y$, there exist a open upper set $U$ and an open lower set $V$ such that $x\in U$, $y\in V$ and $U\cap V=\emptyset$.

Inside $\nc{Nach}$ sits the very interesting full subcategory $\nc{Pries}$ of \emph{Priestley} spaces. This is a class of ordered topological spaces introduced by H. A. Priestley\cite{Priestley} in order to provide an extension of Stone duality to distributive lattices. In other words there is an equivalence of categories $\nc{DLat}^{op}\simeq\nc{Pries}$, where $\nc{DLat}$ denotes the category of (bounded) distributive lattices and lattice homomorphisms.

Recall then than an ordered topological space $(X,\tau,\leq)$ is a \emph{Priestley space} if $(X,\tau)$ is compact and the following is satisfied: whenever $x\nleq y$, there exists a clopen upper set $U$ such that $x\in U$ and $y\notin U$. Ordered spaces satisfying the latter condition are often called \emph{totally order-separated}. It is immediate that every Priestley space is indeed a Nachbin space. It is furthermore clear that the underlying topological space of a Priestley space is a Stone space. In fact, the category $\nc{Stone}$ of Stone spaces is embedded in $\nc{Pries}$ as the full subcategory on the objects for which the order relation is discrete.

\begin{proposition}
	The category $\nc{Nach}$ is exact, while the category $\nc{Pries}$ is regular.
\end{proposition}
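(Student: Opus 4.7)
The plan is to verify the regularity axioms \ref{our regularity} for both categories, plus exactness for $\nc{Nach}$, doing most of the work in parallel since $\nc{Pries}$ sits as a full subcategory of $\nc{Nach}$ closed under the relevant constructions. For finite completeness I would invoke \ref{finite limits=finite products+inserters} and exhibit the terminal object (a singleton), binary products $X \times Y$ with the cartesian product topology and componentwise order (compactness by Tychonoff, closedness of the order because $(\leq_{X \times Y}) = (\leq_X) \times (\leq_Y)$ is closed in $(X\times Y)^2$, total order-separation in the Priestley case via clopen upper sets pulled back from each factor), and inserters of $f,g\colon X \rightrightarrows Y$ (the closed subspace $\{x \in X : f(x) \leq g(x)\}$ of $X$, visibly Nachbin or Priestley whenever $X$ is).

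Next come factorizations and pullback stability. Given $f \colon X \to Y$, set $I := f(X) \subseteq Y$ with the subspace topology and order. Compactness of $X$ and Hausdorffness of $Y$ make $I$ closed in $Y$, hence Nachbin, and the Priestley property is inherited via restriction of clopen upper sets. The inclusion $m \colon I \hookrightarrow Y$ is an order-embedding and therefore an $\nc{ff}$-morphism. To see that the surjection $e \colon X \twoheadrightarrow I$ is an $\nc{so}$-morphism, given a commuting square involving an $\nc{ff}$-morphism $m' \colon A \to B$ on the other side, I would construct the diagonal fill-in set-theoretically from the surjectivity of $e$ and injectivity of $m'$; monotonicity follows because $m'$ is an order-embedding, continuity follows from the classical fact that any continuous surjection from a compact space onto a Hausdorff space is a topological quotient map, and the two-dimensional part of orthogonality is automatic as noted in the remark after the definition of $\nc{so}$-morphism. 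Pullback stability then follows because pullbacks in both categories are closed subspaces of the product, pullbacks of continuous surjections remain surjective, and any continuous monotone surjection is an $\nc{so}$-morphism by the characterization just established.

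For exactness of $\nc{Nach}$, let $E$ be a congruence on $X \in \nc{Nach}$, represented as a closed, reflexive, transitive, weakening-closed subset of $X \times X$. Setting $\sim \; := E \cap E^\circ$ yields a closed equivalence relation on a compact Hausdorff space, so by the standard theorem of Bourbaki the topological quotient $q \colon X \to X/\sim$ is itself compact Hausdorff and $q \times q$ is a quotient map. Define $[x] \leq [y] \iff (x,y) \in E$: weakening-closedness makes this well-defined on equivalence classes, $\sim \; = E \cap E^\circ$ yields antisymmetry, and reflexivity and transitivity of $E$ give the remaining preorder axioms. Closedness of $\leq$ on $(X/\sim)^2$ follows since its preimage under $q \times q$ is the closed set $E$. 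Hence $X/\sim$ is Nachbin, and by construction $q/q = E$, exhibiting $E$ as effective. The principal technical obstacles throughout are the careful interplay between topology and order in the factorization step (where the enriched lifting property rests on the compact-to-Hausdorff quotient-map phenomenon) and in the exactness argument (where closedness of the induced order on $X/\sim$ requires $q \times q$ to be a quotient map, which itself rests on closedness of $\sim$).
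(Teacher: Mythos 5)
Your proof is correct, but it follows a different route from the paper's. You verify the axioms of \ref{our regularity} one by one: finite limits via \ref{finite limits=finite products+inserters}, the $(\nc{so},\nc{ff})$-factorization through the closed image $f(X)$, and pullback stability after identifying $\nc{so}$-morphisms with continuous monotone surjections via an explicit diagonal-lifting argument. The paper instead constructs the coinserter of an arbitrary congruence $E$ as the quotient by $E\cap E^{\circ}$ with the pushed-forward order, identifies the \emph{effective} epimorphisms as the continuous monotone surjections, and then invokes the alternative characterization of regularity proved in Section 2 (kernel congruences have coinserters and effective morphisms are pullback-stable); exactness then comes for free since the coinserter construction applies to every congruence and visibly returns $E$ as $q/q$. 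The two arguments rest on the same topological fact --- a continuous surjection from a compact space to a Hausdorff space is a closed quotient map --- but the paper's route gets regularity and exactness in one stroke, while yours spells out the factorization system explicitly, which is more self-contained but requires the extra (standard, and only implicit in your write-up) observation that every $\nc{so}$-morphism is itself surjective, so that pullback stability applies to the whole class and not just to the surjections you exhibited. Two small points: well-definedness of $[x]\leq[y]\iff(x,y)\in E$ on $\sim$-classes follows from transitivity of $E$ together with $\sim\,\subseteq E\cap E^{\circ}$ rather than from weakening-closedness as you state; and your closedness argument for the quotient order (preimage under the quotient map $q\times q$ equals $E$) is the mirror image of the paper's (image of $E$ under the closed map $q\times q$) --- both are fine.
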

\begin{proof}
	Observe that the coinserter $\begin{tikzcd}E\ar[r,shift left=1ex]\ar[r,shift right=1ex] & X\ar[r,two heads,"q"] & X/E\cap E^{\circ}\end{tikzcd}$ of any internal congruence $E\rightarrowtail X\times X$ in $\nc{Nach}$ is constructed by equipping the set $X/E\cap E^{\circ}$ with the quotient topology and the induced order relation by the pre-order $E$. Since $E$ is closed in $X\times X$, so is the equivalence relation $E\cap E^{\circ}$ and so at the level of spaces we know that the quotient will be a compact Hausdorff space. It is then a Nachbin space because the order relation is by definition equal to $(q\times q)[E]$ and the map $q\times q$ is closed.
	
	It now follows that the effective epimorphisms in $\nc{Nach}$ are precisely the continuous monotone surjections. Indeed, if $f\colon X\to Y\in\nc{Nach}$ is surjective then on the level of spaces it is a continuous surjection between compact Hausdorff spaces and hence it is a quotient map. This means that the induced $\bar{f}\colon X/R\to Y$ is a homeomorphism, where $R=\{(x,x')| f(x)=f(x')\}=E\cap E^{\circ}$, for $E\coloneqq f/f$. But $\bar{f}$ also preserves and reflects the order because by definition we have $\bar{f}([x])\leq\bar{f}([x'])\iff f(x)\leq f(x')\iff (x,x')\in E\iff [x]\leq[x']$. Thus, $f$ is the coinserter of its kernel congruence.
	
	This shows that $\nc{Nach}$ is regular, since the continuous monotone surjections are clearly stable under pullback. To see that $\nc{Pries}$ is regular, it suffices to observe that the latter is closed under finite limits and subobjects in $\nc{Nach}$.
	
	Finally, consider any internal congruence $E\rightarrowtail X\times X$ in $\nc{Nach}$ and construct its coinserter $q$ as we did above. It is then immediate by the construction that $E=q/q$ and so we have proved that $\nc{Nach}$ is exact.
\end{proof}

We can now deduce an ordered version of the folklore result which identifies the exact completion of $\nc{Stone}$ as the category of compact Hausdorff spaces. The latter, to the best of the author's knowledge, seems to have first appeared in print in \cite{Panagis & me} where a similar argument involving the ordinary exact completion was invoked.

\begin{corollary}
	$\nc{Pries}_{ex/reg}\simeq\nc{Nach}\simeq\nc{Stone}_{ex/reg}$.
\end{corollary}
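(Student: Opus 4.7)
The plan is to apply \ref{Characterization of ex/reg} to the two full subcategory inclusions $\iota_P\colon\nc{Pries}\hookrightarrow\nc{Nach}$ and $\iota_S\colon\nc{Stone}\hookrightarrow\nc{Nach}$. Since the preceding proposition already establishes that $\nc{Nach}$ is exact and both $\nc{Pries}$, $\nc{Stone}$ are regular, it suffices to verify in each case that the inclusion is (i) a regular functor, (ii) fully order-faithful, and (iii) covering.

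Properties (i) and (ii) should be routine. Fully order-faithfulness holds because these are full subcategories with the order on hom-sets inherited from $\nc{Nach}$; in the Stone case, both hom-sets in fact carry the discrete order, since a pointwise inequality $f\leq g$ of continuous maps into a discretely ordered Stone space forces $f=g$. For regularity, the plan is to use that $\nc{Pries}$ and $\nc{Stone}$ are closed under finite products, equalizers, and finite powers inside $\nc{Nach}$: the power $X^P$ by a finite poset $P$ sits as a closed subspace of $X^{|P|}$, and totally order-separatedness (respectively discrete order) is preserved by products, closed subspaces, and such powers. Preservation of $\nc{so}$-morphisms reduces to the observation that in all three categories the $\nc{so}$-morphisms are precisely the continuous monotone surjections, by the same quotient argument used in the previous proposition.

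The main content of the proof is the covering property: every Nachbin space $Y$ must admit a continuous monotone surjection from a Stone space (which is automatically Priestley). My plan is to use the Stone-\v{C}ech compactification of the underlying set. Let $|Y|_d$ denote the set underlying $Y$ equipped with the discrete topology and discrete order, so that $\beta|Y|_d$ is a Stone space with discrete order. The identity function $|Y|_d\to Y$ is continuous and extends uniquely to a continuous map $q\colon\beta|Y|_d\to Y$; this $q$ is surjective because its image is a closed subset of $Y$ (compact in Hausdorff) containing $|Y|$, and it is trivially monotone because the domain has discrete order. Hence $q$ is a continuous monotone surjection, i.e. an effective epimorphism in $\nc{Nach}$, and the same $q$ witnesses the covering property for both $\iota_S$ and $\iota_P$.

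Combining these verifications, \ref{Characterization of ex/reg} yields that both induced functors $\overline{\iota_S}\colon\nc{Stone}_{ex/reg}\to\nc{Nach}$ and $\overline{\iota_P}\colon\nc{Pries}_{ex/reg}\to\nc{Nach}$ are equivalences, producing the claimed chain. I expect the only subtle point to be the verification that finite \emph{Pos}-enriched powers in $\nc{Nach}$ restrict to $\nc{Stone}$ and $\nc{Pries}$, but this should cause no difficulty since in the presence of a discrete (respectively totally order-separated) target the relevant power is cut out as a closed subspace of an ordinary power and inherits the desired topological properties.
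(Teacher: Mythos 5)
Your proposal is correct and follows essentially the same route as the paper: both verify that the inclusions into $\nc{Nach}$ are regular, fully order-faithful and covering, with the covering property established via the Stone--\v{C}ech compactification of the discretely topologized and discretely ordered underlying set, and then both invoke \ref{Characterization of ex/reg}. The paper's proof is terser (it takes regularity and full order-faithfulness of the inclusions as already established), but the substance is identical.
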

\begin{proof}
	The inclusions $\nc{Stone}\hookrightarrow\nc{Pries}\hookrightarrow\nc{Nach}$ are both regular functors. Furthermore, if $X\in\nc{Nach}$, then $X$ is in particular a compact Hausdorff space and so admits a continuous surjection $\beta(X)\twoheadrightarrow X$ from a Stone space $\beta(X)$, the latter being the Stone-Cech compactification of the discrete set $X$. Equipping $\beta(X)$ with the equality relation this becomes a continuous monotone surjection in $\nc{Nach}$. Thus, both inclusion functors are also covering and the result follows from \ref{Characterization of ex/reg}.
\end{proof}

Before ending this section, let us record a small observation that generalizes some of the examples we have seen so far. To this effect, recall that some varieties of ordered algebras were described as exact completions of certain ordinary varieties which appeared as the objects with discrete order relation. For example, we had $\nc{Mon}_{ex/reg}\simeq\nc{OrdMon}$ for the category of ordered monoids. Similarly, in the context of the above corollary we could have included the equivalence $\nc{CHaus}_{ex/reg}\simeq\nc{Nach}$, where $\nc{CHaus}$ denotes the locally discrete category of compact Hausdorff spaces.

More generally now, consider any regular category $\mathcal{E}$ and define an object $X\in\mathcal{E}$ to be \emph{discrete} if for every $f,g\colon A\to X\in\ca{E}$ we have that $f\leq g\implies f=g$. If we denote by $\nc{Dis}(\ca{E})$ the full subcategory on the discrete objects, then it is plain that $\nc{Dis}(\ca{E})$ is a locally discrete category which is closed under finite limits and subobjects in $\ca{E}$. Thus, $\nc{Dis}(\ca{E})$ is a regular category as well. We will say that $\ca{E}$ has \emph{enough discrete objects} if for every object $X\in\ca{E}$ there exists an $\nc{so}$-morphism $D\twoheadrightarrow X$ in $\ca{E}$ with $D\in\nc{Dis}(\ca{E})$. By another application of \ref{Characterization of ex/reg} we now deduce the following:

\begin{corollary}
	Let $\ca{E}$ be an exact category with enough discrete objects. Then, $\ca{E}\simeq\nc{Dis}(\ca{E})_{ex/reg}$.
\end{corollary}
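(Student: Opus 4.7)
The plan is to apply Theorem~\ref{Characterization of ex/reg} to the inclusion functor $F\colon\nc{Dis}(\ca{E})\hookrightarrow\ca{E}$. Since $\ca{E}$ is exact by hypothesis, it suffices to verify that $F$ is a regular functor which is fully order-faithful and covering; the equivalence $\overline{F}\colon\nc{Dis}(\ca{E})_{ex/reg}\xrightarrow{\simeq}\ca{E}$ will then follow immediately.

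First I would check that $F$ is regular. As noted in the paragraph preceding the corollary, $\nc{Dis}(\ca{E})$ is closed under finite limits in $\ca{E}$, so $F$ preserves finite limits automatically. For the factorization, given any $f\colon X\to Y$ in $\nc{Dis}(\ca{E})$, consider its $(\nc{so},\nc{ff})$-factorization $X\twoheadrightarrow I\rightarrowtail Y$ computed in $\ca{E}$. Since $\nc{Dis}(\ca{E})$ is closed under subobjects, $I$ lies in $\nc{Dis}(\ca{E})$, so this is a factorization inside the subcategory. By uniqueness of $(\nc{so},\nc{ff})$-factorizations in the regular category $\nc{Dis}(\ca{E})$, it must coincide with the one computed there, and hence $F$ preserves both $\nc{so}$- and $\nc{ff}$-morphisms.

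Next, $F$ is fully order-faithful. Fullness holds because $\nc{Dis}(\ca{E})$ is by definition a full subcategory of $\ca{E}$. Order-faithfulness is essentially trivial: for any $Y\in\nc{Dis}(\ca{E})$ and any $X\in\nc{Dis}(\ca{E})$, the poset $\ca{E}(X,Y)$ is already discrete by the defining property of a discrete object (any inequality $f\leq g$ with codomain $Y$ forces $f=g$), so the condition $Ff\leq Fg\Rightarrow f\leq g$ holds vacuously. Finally, the statement that $F$ is covering is precisely the hypothesis that $\ca{E}$ has enough discrete objects, which provides for every $X\in\ca{E}$ an $\nc{so}$-morphism $D\twoheadrightarrow X$ with $D=FD\in\nc{Dis}(\ca{E})$; such an $\nc{so}$-morphism is effective in $\ca{E}$ because $\ca{E}$ is regular (being in fact exact).

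With all three conditions in place, Theorem~\ref{Characterization of ex/reg} delivers the desired equivalence $\nc{Dis}(\ca{E})_{ex/reg}\simeq\ca{E}$. There is no real obstacle in the argument; the only point meriting explicit verification is that the $(\nc{so},\nc{ff})$-factorization in $\nc{Dis}(\ca{E})$ is inherited from $\ca{E}$, and this follows at once from the closure properties of $\nc{Dis}(\ca{E})$ recorded just before the statement.
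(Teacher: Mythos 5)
Your proposal is correct and follows exactly the route the paper intends: the corollary is stated as ``another application of \ref{Characterization of ex/reg}'' to the inclusion $\nc{Dis}(\ca{E})\hookrightarrow\ca{E}$, whose regularity, full order-faithfulness and covering property you verify using precisely the closure properties of $\nc{Dis}(\ca{E})$ and the enough-discrete-objects hypothesis recorded just before the statement. The only difference is that you spell out details the paper leaves implicit, and all of them check out.
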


\section{Internal Posets and Exact Completion}

In this final section we consider the process of taking internal posets in an ordinary category and how the ordinary and enriched notions of regularity and exactness are related through said process. Furthermore, we prove a type of commutation between this construction of internal posets and that of exact completion.

To begin with, suppose that $\ca{C}$ is any finitely complete ordinary category. We can then define a category $\nc{Ord}(\ca{C})$ as follows:
\begin{itemize}
	\item \underline{Objects:} are pairs $(X,\leq_{X})$, where $X$ is an object of $\mathcal{C}$ and $\leq_{X}\rightarrowtail X\times X$ is a partial order relation in $\ca{C}$.
	\item \underline{Morphisms:} A morphism $f\colon(X,\leq_{X})\to(Y,\leq_{Y})\in\nc{Ord}(\ca{C})$ is a morphism $f\colon X\to Y\in\ca{C}$ such that $f(\leq_{X})\subseteq\leq_{Y}$.
\end{itemize}
The condition $f(\leq_{X})\subseteq\leq_{Y}$ means that there is a commutative diagram in $\ca{C}$ of the form
\begin{displaymath}
	\begin{tikzcd}
		\leq_{X}\ar[d,tail]\ar[r,dashed] & \leq_{Y}\ar[d,tail] \\
		X\times X\ar[r,"f\times f"'] & Y\times Y
	\end{tikzcd}
\end{displaymath}
Composition of morphisms and identities are those of $\ca{C}$.

Furthermore, given morphisms $f,f'\colon(X,\leq_{X})\to(Y,\leq_{Y})\in\nc{Ord}(\ca{C})$, we define $f\leq f'$ to mean that there exists a commutative diagram
\begin{displaymath}
	\begin{tikzcd}
		X\ar[dr,"\langle f{,}g\rangle"']\ar[rr,dashed] & & \leq_{Y}\ar[dl,tail] \\
		& Y\times Y &
	\end{tikzcd}
\end{displaymath}
Now it is easy to see that this order relation on morphisms of $\nc{Ord}(\ca{C})$ is preserved by composition. For example, if $f,f'\colon(X,\leq_{X})\to(Y,\leq_{Y})\in\nc{Ord}(\ca{C})$ with $f\leq f'$ and $g\colon(Y,\leq_{Y})\to(Z,\leq_{Z})$, we have $gf\leq gf'$ by pasting the following commutative diagrams
\begin{displaymath}
	\begin{tikzcd}
		X\ar[dr,"\langle {f,f'}\rangle"']\ar[rr,dashed] & & \leq_{Y}\ar[dl,tail]\ar[rr,dashed] & & \leq_{Z}\ar[dl,tail] \\
		& Y\times Y\ar[rr,"g\times g"'] & &Z\times Z & 
	\end{tikzcd}
\end{displaymath}

Thus, $\nc{Ord}(\ca{C})$ is enriched in $\nc{Pos}$. Our first observation below is that finite completeness of the ordinary category $\ca{C}$ implies the existence of all finite weighted limits in $\nc{Ord}(\ca{C})$.

\begin{proposition}
	If $\ca{C}$ is finitely complete, then $\nc{Ord}(\ca{C})$ has finite weighted limits.
\end{proposition}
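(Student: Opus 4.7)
By Proposition \ref{finite limits=finite products+inserters}, it suffices to construct finite products and inserters in $\nc{Ord}(\ca{C})$. A terminal object is given by $(1,\Delta_{1})$, where $1$ is terminal in $\ca{C}$ with its (unique) discrete internal order.

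For the binary product of $(X,\leq_{X})$ and $(Y,\leq_{Y})$, I take the underlying object to be $X\times Y$ equipped with the pointwise order defined as the composite
\begin{displaymath}
\leq_{X}\times\leq_{Y}\;\rightarrowtail\;(X\times X)\times(Y\times Y)\;\xrightarrow{\;\cong\;}\;(X\times Y)\times(X\times Y).
\end{displaymath}
Reflexivity, transitivity and antisymmetry all transfer from $\leq_{X},\leq_{Y}$ to this composite because they are expressible in terms of finite limits, which exist in $\ca{C}$. The projections $\pi_{X},\pi_{Y}$ are order-preserving by construction. Given $f\colon(Z,\leq_{Z})\to(X,\leq_{X})$ and $g\colon(Z,\leq_{Z})\to(Y,\leq_{Y})$, the pair factors uniquely through $\langle f,g\rangle\colon Z\to X\times Y$ in $\ca{C}$, and this factorization preserves the order by the universal property of $\leq_{X}\times\leq_{Y}$ applied to the two order-preservation data for $f$ and $g$.

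For inserters, given $f,g\colon(X,\leq_{X})\to(Y,\leq_{Y})$, I form the pullback
\begin{displaymath}
\begin{tikzcd}
E\ar[r]\ar[d,"e"'] & {\leq_{Y}}\ar[d,tail] \\
X\ar[r,"\langle f{,}g\rangle"'] & Y\times Y
\end{tikzcd}
\end{displaymath}
in $\ca{C}$ and endow $E$ with the internal order $\leq_{E}\coloneqq(e\times e)^{-1}(\leq_{X})\rightarrowtail E\times E$, which is again a partial order by the same argument as above. By construction $e$ is a morphism in $\nc{Ord}(\ca{C})$ satisfying $fe\leq ge$. Given any $h\colon(Z,\leq_{Z})\to(X,\leq_{X})$ with $fh\leq gh$, the order inequality provides a morphism $Z\to\leq_{Y}$ which, together with $h$, induces a unique factorization $\tilde{h}\colon Z\to E$ through the pullback; and $\tilde{h}$ preserves the order because the composite $(e\times e)\tilde{h}$ already does, using that $\leq_{E}$ was defined by pullback along $e\times e$.

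The principal subtlety in both constructions lies in the $2$-dimensional part of the universal properties, and I expect this to be where care is required rather than where the argument breaks. Concretely, one must verify that the projections $(\pi_{X},\pi_{Y})$ are jointly order-monomorphic and that $e$ is an $\nc{ff}$-morphism in $\nc{Ord}(\ca{C})$. In each case the verification reduces to the same mechanism: an inequality $\alpha\leq\beta$ in $\nc{Ord}(\ca{C})$ unpacks as a morphism into the ambient order-subobject, and feeding this into the universal property of the relevant pullback produces the required factorization. Thus both constructions genuinely yield weighted (rather than merely conical) limits in $\nc{Ord}(\ca{C})$.
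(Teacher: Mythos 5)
Your proposal is correct and follows essentially the same route as the paper: reduce to finite products and inserters via \ref{finite limits=finite products+inserters}, take the product order $\leq_{X}\times\leq_{Y}$ on $X\times Y$, and build the inserter by pulling back $\leq_{Y}\rightarrowtail Y\times Y$ along $\langle f,g\rangle$ and restricting $\leq_{X}$ to $E$ (your $(e\times e)^{-1}(\leq_{X})$ is the paper's $(E\times E)\cap\leq_{X}$). The two-dimensional verifications you sketch are exactly the ones the paper carries out, and your described mechanism for them is the right one.
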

\begin{proof}
	It is easy to see that $\begin{tikzcd}(X,\leq_{X}) & (X\times Y,\leq_{X}\times\leq_{Y})\ar[l,"\pi_{X}"']\ar[r,"\pi_{Y}"] & (Y,\leq_{Y})\end{tikzcd}$ is a product diagram for every $(X,\leq_{X}),(Y,\leq_{Y})\in\nc{Ord}(\ca{C})$. Let us show how to construct the inserter of a pair of morphisms $\begin{tikzcd}(X,\leq_{X})\ar[r,shift left=.75ex,"f"]\ar[r,shift right=.75ex,"g"'] & (Y,\leq_{Y})\end{tikzcd}$. For this, form the following pullback square in $\ca{C}$
	\begin{displaymath}
		\begin{tikzcd}[sep=large]
			E\ar[d,tail,"e"']\ar[r,"e'"] & \leq_{Y}\ar[d,tail] \\
			X\ar[r,"\langle{f,g}\rangle"'] & Y\times Y
		\end{tikzcd}
	\end{displaymath}
	Let $\leq_{E}$ be the restriction of $\leq_{X}$ to the subobject $E\rightarrowtail X$, i.e. $\leq_{E}=(E\times E)\cap\leq_{X}$ as subobjects of $X\times X$. It is easy to see that $\leq_{E}$ is itself an internal partial order relation on $E\in\ca{C}$ so that we have a morphism $e\colon(E,\leq_{E})\to(X,\leq_{X})\in\nc{Ord}(\ca{C})$. Also, by commutativity of the pullback square above we have $fe\leq ge$.
	
	Now let $h\colon(Z,\leq_{Z})\to(X,\leq_{X})$ be such that $fh\leq gh$. This means that $\langle fh,gh\rangle=\langle f,g\rangle h$ factors through $\leq_{Y}\rightarrowtail Y\times Y$, say via $u\colon Z\to\leq_{Y}$, so then by the pullback property there exists a unique $v\colon Z\to E$ satisfying $ev=h$, $e'v=u$.
	
	Finally, $e$ is an $\nc{ff}$-morphism in $\nc{Ord}(\ca{C})$ by definition of $\leq_{E}$. Indeed, for any $\begin{tikzcd}(Z,\leq_{Z})\ar[r,shift left=.75ex,"h"]\ar[r,shift right=.75ex,"h'"'] & (E,\leq_{E})\end{tikzcd}$, the inequality $eh\leq eh'$ means that $(e\times e)\langle h,h'\rangle$ factors through $\leq_{X}$, which implies $\langle h,h'\rangle$ factors through $\leq_{E}=(E\times E)\cap\leq_{X}$, i.e. that $h\leq h'$.
\end{proof}
\vspace{3mm}

Since it will be needed later, let us also record here how to construct comma squares
\begin{displaymath}
	\begin{tikzcd}
		(C,\leq_{C})\ar[r,"c_{1}"]\ar[d,"c_{0}"']\ar[dr,phantom,"\leq"] & (Y,\leq_{Y})\ar[d,"g"] \\
		(X,\leq_{X})\ar[r,"f"'] & (Z,\leq_{Z})
	\end{tikzcd}
\end{displaymath}
in $\nc{Ord}(\ca{C})$. This is accomplished by constructing the following pullback square in $\ca{C}$
\begin{displaymath}
	\begin{tikzcd}
		C\ar[r]\ar[d,tail,"\langle{c_{0},c_{1}}\rangle"'] & \leq_{Z}\ar[d,tail] \\
		X\times Y\ar[r,"f\times g"'] & Z\times Z
	\end{tikzcd}
\end{displaymath}
and then setting $\leq_{C}\coloneqq(C\times C)\cap(\leq_{X}\times\leq_{Y})$.

Before moving on, let us also discuss $\nc{ff}$-morphisms in $\nc{Ord}(\ca{C})$. We saw in the course of the previous proof that an $m\colon(X,\leq_{X})\to(Y,\leq_{Y})\in\nc{Ord}(\ca{C})$ with $m\colon X\to Y\in\ca{C}$ monomorphic and $\leq_{X}=(X\times X)\cap\leq_{Y}$ is an $\nc{ff}$-morphism in $\nc{Ord}(\ca{C})$. It is in fact not too hard to see that this completely characterizes $\nc{ff}$-morphisms in $\nc{Ord}(\ca{C})$. Indeed, assume that $m\colon(X,\leq_{X})\to(Y,\leq_{Y})\in\nc{Ord}(\ca{C})$ is an $\nc{ff}$-morphism. If $f,g\colon Z\to X\in\ca{C}$ are such that $mf=mg$, then we can also consider them as morphisms $f,g\colon(Z,\Delta_{Z})\to(X,\leq_{X})$ in $\nc{Ord}(\ca{C})$ and hence deduce that $f=g$. This proves $m$ must be a monomorphism in $\ca{C}$. Now arguing with generalized elements one can easily deduce that $\leq_{X}$ is indeed the restriction of $\leq_{Y}$ along $m\colon X\rightarrowtail Y$.

\begin{lemma}
	If $f\colon(X,\leq_{X})\to(Y,\leq_{Y})\in\nc{Ord}(\ca{C})$ is such that the underlying $f\colon X\to Y$ is a strong epimorphism in $\ca{C}$, then $f$ is an $\nc{so}$-morphism in $\nc{Ord}(\ca{C})$.
\end{lemma}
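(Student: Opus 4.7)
The plan is to verify directly that $f$ is left orthogonal to every $\nc{ff}$-morphism of $\nc{Ord}(\ca{C})$. The excerpt has already characterized an $\nc{ff}$-morphism $m\colon(X,\leq_{X})\to(Y,\leq_{Y})\in\nc{Ord}(\ca{C})$ as a monomorphism $m\colon X\to Y$ in $\ca{C}$ such that $\leq_{X}$ is the pullback (i.e.\ the restriction) of $\leq_{Y}$ along $m\times m$. This characterization is the bridge that transfers orthogonality from $\ca{C}$ to $\nc{Ord}(\ca{C})$.

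First I would take a commutative square
\begin{displaymath}
\begin{tikzcd}
(A,\leq_{A})\ar[r,"f"]\ar[d,"u"'] & (B,\leq_{B})\ar[d,"v"] \\
(X,\leq_{X})\ar[r,tail,"m"'] & (Y,\leq_{Y})
\end{tikzcd}
\end{displaymath}
in $\nc{Ord}(\ca{C})$ with $m$ an $\nc{ff}$-morphism. Forgetting orders, $f$ is a strong epimorphism and $m$ is a monomorphism in $\ca{C}$, so the usual diagonal fill-in in $\ca{C}$ produces a unique $d\colon B\to X$ with $df=u$ and $md=v$.

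The main step, and the only thing that really needs an argument, is to check that $d$ is monotone in the enriched sense, i.e.\ $(d\times d)(\leq_{B})\subseteq\leq_{X}$. Since $\leq_{X}$ is the pullback of $\leq_{Y}$ along $m\times m$, the composite $\leq_{B}\rightarrowtail B\times B\xrightarrow{d\times d}X\times X$ factors through $\leq_{X}$ if and only if its further composition with $m\times m$ factors through $\leq_{Y}$. But $(m\times m)(d\times d)=v\times v$, and $(v\times v)(\leq_{B})\subseteq\leq_{Y}$ holds precisely because $v\colon(B,\leq_{B})\to(Y,\leq_{Y})$ is a morphism of $\nc{Ord}(\ca{C})$. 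Hence $d$ lifts to a morphism $d\colon(B,\leq_{B})\to(X,\leq_{X})$ in $\nc{Ord}(\ca{C})$, and by construction it still satisfies $df=u$ and $md=v$.

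Finally, the 2-dimensional part of the orthogonality condition comes for free by the remark following the definition of $\nc{so}$-morphism: given two such squares with $u_{1}\leq u_{2}$ and $v_{1}\leq v_{2}$ and diagonals $d_{1},d_{2}$, the inequality $md_{1}=v_{1}\leq v_{2}=md_{2}$ combined with $m$ being $\nc{ff}$ forces $d_{1}\leq d_{2}$. Thus $f\perp m$ in $\nc{Ord}(\ca{C})$ for every $\nc{ff}$-morphism $m$, which is the definition of $f$ being an $\nc{so}$-morphism.
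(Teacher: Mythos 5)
Your proof is correct and follows essentially the same route as the paper's: lift the diagonal from the underlying strong-epi/mono orthogonality in $\ca{C}$ and then use the characterization $\leq_{X}=(X\times X)\cap\leq_{Y}$ of $\nc{ff}$-morphisms to see that the diagonal is monotone. The explicit treatment of the 2-dimensional part of orthogonality is a nice touch, though the paper already disposes of it in the remark following the definition of $\nc{so}$-morphisms.
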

\begin{proof}
	Consider the following commutative square in $\nc{Ord}(\ca{C})$, where we assume $m\colon(M,\leq_{M})\to(Z,\leq_{Z})$ is an $\nc{ff}$-morphism.
	\begin{displaymath}
		\begin{tikzcd}
			(X,\leq_{X})\ar[r,"f"]\ar[d,"h"'] & (Y,\leq_{Y})\ar[d,"g"] \\
			(M,\leq_{M})\ar[r,tail,"m"'] & (Z,\leq_{Z})
		\end{tikzcd}
	\end{displaymath}
	In particular, by the preceding discussion we know that $m$ is monomorphic in $\ca{C}$ and so by the property of $f$ as a strong epimorphism in the latter category we deduce the existence of a $u\colon Y\to M$ such that $uf=h$ and $mu=g$. The fact that $u$ is actually a morphism in $\nc{Ord}(\ca{C})$ follows because $mu=g$ is such a morphism and because $m$ being an $\nc{ff}$-morphism also means that $\leq_{M}=(M\times M)\cap\leq_{Z}$.
\end{proof}

We can now prove that ordinary regularity of $\ca{C}$ implies (enriched) regularity for $\nc{Ord}(\ca{C})$. Note that this result is in some sense a special case of Proposition 62 in \cite{2-dim reg & ex}, where the authors prove a form of 2-categorical regularity for the 2-category $\nc{Cat}(\ca{E})$ of internal categories in a regular (1-)category $\ca{E}$. Nevertheless, we include a proof here in order to make the paper more self-contained. 

\begin{proposition}
	If $\ca{C}$ is an ordinary regular category, then $\nc{Ord}(\ca{C})$ is regular.
\end{proposition}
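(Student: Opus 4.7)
The plan is to verify the three conditions of \ref{our regularity} for $\nc{Ord}(\ca{C})$: condition (R1) is the immediately preceding proposition, so the work is concentrated in (R2) and (R3). Throughout, I exploit the explicit characterizations of $\nc{ff}$- and $\nc{so}$-morphisms in $\nc{Ord}(\ca{C})$ relative to the underlying category $\ca{C}$. On the one hand, as recalled in the discussion just before the preceding lemma, a morphism $m\colon(X,\leq_X)\to (Y,\leq_Y)$ is $\nc{ff}$ iff the underlying $m$ is monic in $\ca{C}$ and $\leq_X = (m\times m)^{-1}(\leq_Y)$. On the other hand, as I will establish along the way, a morphism is $\nc{so}$ iff its underlying arrow is a strong epimorphism in $\ca{C}$.

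For (R2), given $f\colon (X,\leq_X)\to (Y,\leq_Y)$ in $\nc{Ord}(\ca{C})$, I take the (strong epi, mono)-factorization $f = m\circ e$ in $\ca{C}$, available by regularity of $\ca{C}$, with $e\colon X\twoheadrightarrow I$ strong epi and $m\colon I\rightarrowtail Y$ monic. I equip $I$ with the order $\leq_I \coloneqq (m\times m)^{-1}(\leq_Y)$, whose reflexivity, transitivity and antisymmetry are inherited from $\leq_Y$ together with $m$ being monic. Then $m\colon (I,\leq_I)\to (Y,\leq_Y)$ is $\nc{ff}$ by construction, while $e$ lifts to a morphism $(X,\leq_X)\to (I,\leq_I)$ because $(m\times m)(e\times e)(\leq_X) = (f\times f)(\leq_X)\subseteq \leq_Y$ forces $(e\times e)(\leq_X)\subseteq \leq_I$. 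Finally, by the preceding lemma, $e$ is an $\nc{so}$-morphism because its underlying arrow is strong epi in $\ca{C}$.

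For (R3), two ingredients suffice. First, the underlying functor $U\colon \nc{Ord}(\ca{C})\to \ca{C}$ preserves pullbacks: the pullback of $f\colon(X,\leq_X)\to (Z,\leq_Z)$ against $g\colon(Y,\leq_Y)\to (Z,\leq_Z)$ is carried by the pullback $P$ in $\ca{C}$, equipped with $\leq_P \coloneqq p_0^{-1}(\leq_X)\cap p_1^{-1}(\leq_Y)$, which simultaneously makes the projections morphisms in $\nc{Ord}(\ca{C})$ and yields the required joint order-monicity and universal property. Second, $\nc{so}$-morphisms in $\nc{Ord}(\ca{C})$ are exactly those with strong epi underlying: the \emph{if} direction is the preceding lemma, while for the \emph{only if}, if $f$ is $\nc{so}$ I form its factorization $f = me$ as in (R2) and observe that $m$ is both $\nc{ff}$ by construction and $\nc{so}$ by \ref{properties of so-morphisms}(2); by the orthogonality $m\perp m$ this forces $m$ to be iso in $\nc{Ord}(\ca{C})$, hence iso in $\ca{C}$, so the underlying $f = me$ is strong epi. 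Stability now follows immediately: in a pullback square in $\nc{Ord}(\ca{C})$ with $f$ an $\nc{so}$-morphism, the pullback $p_1$ underlies the pullback in $\ca{C}$ of a strong epi, which is again strong epi by regularity of $\ca{C}$, hence $p_1$ is itself $\nc{so}$.

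The main obstacle is the characterization of $\nc{so}$-morphisms in $\nc{Ord}(\ca{C})$ via the forgetful functor; once this is available, (R2) and (R3) reduce cleanly to the corresponding statements in $\ca{C}$ together with the fact that $U$ preserves finite limits. The \emph{only if} half is the only step that is not entirely mechanical, relying on the cancellation property \ref{properties of so-morphisms}(2) together with the rigidity of $\nc{ff}$-isos — without these one would have no a priori reason to expect that an $\nc{so}$-morphism in the enriched sense has strong epi underlying.
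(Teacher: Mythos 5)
Your proposal is correct and follows essentially the same route as the paper's own proof: lift the (strong epi, mono)-factorization from $\ca{C}$ using the restricted order on the image, characterize $\nc{so}$-morphisms in $\nc{Ord}(\ca{C})$ as those with strong epi underlying (via the "so $\wedge$ ff $\Rightarrow$ iso" argument on the mono part), and deduce pullback stability from the fact that pullbacks are computed on underlying objects. No substantive differences.
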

\begin{proof}
	Consider any $f\colon(X,\leq_{X})\to(Y,\leq_{Y})\in\nc{Ord}(\ca{C})$ along with its (regular epi,mono) factorization $\begin{tikzcd}X\ar[r,two heads,"p"] & M\ar[r,tail,"m"] & Y\end{tikzcd}$ in $\ca{C}$. Then by what we have already established earlier, upon setting $\leq_{M}\coloneqq(M\times M)\cap\leq_{Y}$, we obtain an ($\nc{so}$,$\nc{ff}$) factorization $\begin{tikzcd}(X,\leq_{X})\ar[r,two heads,"p"] & (M,\leq_{M})\ar[r,tail,"m"] & (Y,\leq_{Y})\end{tikzcd}$ in $\nc{Ord}(\ca{C})$.
	
	Now the existence of these factorizations together with the previous lemma imply that any $f\colon(X,\leq_{X})\to(Y,\leq_{Y})\in\nc{Ord}(\ca{C})$ is an $\nc{so}$-morphism in $\nc{Ord}(\ca{C})$ if and only if $f\colon X\to Y$ is a regular(=strong) epimorphism in $\ca{C}$. For the ``only if'' direction, suppose that $f\colon(X,\leq_{X})\to(Y,\leq_{Y})$ is an $\nc{so}$-morphism and consider its factorization $\begin{tikzcd}(X,\leq_{X})\ar[r,two heads,"p"] & (M,\leq_{M})\ar[r,tail,"m"] & (Y,\leq_{Y})\end{tikzcd}$ as constructed above. Then we have that $m\colon(M,\leq_{M})\to (Y,\leq_{Y})$ is both an $\nc{so}$ and $\nc{ff}$-morphism, hence is an isomorphism in $\nc{Ord}(\ca{C})$. In particular, $m$ is an isomorphism in $\ca{C}$ and so $p$ being a strong epimorphism in $\ca{C}$ implies the same for $f=mp$. 
	
	Finally, since pullbacks in $\nc{Ord}(\ca{C})$ are constructed by simply taking the pullback of the underlying morphisms in $\ca{C}$, pullback-stability of regular epimorphisms in $\ca{C}$ implies pullback-stability of $\nc{so}$-morphisms in $\nc{Ord}(\ca{C})$. 
\end{proof}

Similarly, ordinary exactness of $\ca{C}$ implies $\nc{Pos}$-enriched exactness of $\nc{Ord}(\ca{C})$. Again, this is a special case of Proposition 63 in \cite{2-dim reg & ex}.

\begin{proposition}\label{OrdC exact}
	If $\ca{C}$ is an ordinary exact category, then $\nc{Ord}(\ca{C})$ is exact.
\end{proposition}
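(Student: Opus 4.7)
The plan is to leverage the ordinary exactness of $\ca{C}$ to quotient by the underlying equivalence relation of the congruence and then induce the correct order on the quotient. Given a congruence $E$ on an object $(X,\leq_X)\in\nc{Ord}(\ca{C})$, the characterisation of $\nc{ff}$-morphisms in $\nc{Ord}(\ca{C})$ identifies $E$ with a monomorphism $\langle e_0,e_1\rangle\colon E\rightarrowtail X\times X$ in $\ca{C}$ satisfying $\leq_X\subseteq E$ (reflexivity) and $E\circ E\subseteq E$ (transitivity). The intersection $E\cap E^\circ$ in $\ca{C}$ is then an equivalence relation, so ordinary exactness of $\ca{C}$ produces a coequaliser $q\colon X\twoheadrightarrow Y$ whose kernel pair is precisely $E\cap E^\circ$. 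I then equip $Y$ with the subobject $\leq_Y\rightarrowtail Y\times Y$ defined as the image in $\ca{C}$ of the composite $E\rightarrowtail X\times X\xrightarrow{q\times q}Y\times Y$.

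The next task is to verify that $\leq_Y$ is a partial order on $Y$. Reflexivity is clear because $\Delta_Y=(q\times q)(\Delta_X)\subseteq(q\times q)(E)=\leq_Y$, and transitivity follows from a standard cover-and-lift argument using transitivity of $E$ together with the fact that $q\times q$ is a regular epimorphism. The genuinely delicate point, and the main obstacle of the proof, is antisymmetry. Given $y,y'\colon A\to Y$ with both $(y,y')$ and $(y',y)$ in $\leq_Y$, I pass to a common cover $p\colon B\twoheadrightarrow A$ along which I can lift to pairs $(x_0,x_1),(x_0',x_1')\colon B\to E$ satisfying $qx_0=yp=qx_1'$ and $qx_1=y'p=qx_0'$. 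The mixed coincidences $qx_1=qx_0'$ and $qx_1'=qx_0$ place $(x_1,x_0')$ and $(x_1',x_0)$ into the kernel pair $E\cap E^\circ\subseteq E$, so chaining them with the given $(x_0',x_1')\in E$ through transitivity yields $(x_1,x_0)\in E$. Combined with $(x_0,x_1)\in E$ this forces $(x_0,x_1)\in E\cap E^\circ$, whence $qx_0=qx_1$, i.e.\ $yp=y'p$; and $y=y'$ follows because $p$ is an order-epimorphism.

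It then remains to verify that the morphism $q\colon(X,\leq_X)\to(Y,\leq_Y)$ in $\nc{Ord}(\ca{C})$ has kernel congruence equal to $E$. That $q$ is a morphism is immediate, since $(q\times q)(\leq_X)\subseteq(q\times q)(E)=\leq_Y$. By the construction of comma squares in $\nc{Ord}(\ca{C})$, $q/q$ is the pullback of $\leq_Y\rightarrowtail Y\times Y$ along $q\times q$, so the inclusion $E\subseteq q/q$ is tautological from the definition of $\leq_Y$. The reverse inclusion uses the same cover-and-lift pattern as antisymmetry: given a generalised element $(x,x')\in_A q/q$, a cover $p\colon B\twoheadrightarrow A$ lifts the condition $(qx,qx')\in\leq_Y$ to a pair $(a,b)\in E$ with $qa=qxp$ and $qb=qx'p$, placing $(xp,a)$ and $(b,x'p)$ into $E\cap E^\circ\subseteq E$; transitivity of $E$ chains these through $(a,b)$ to give $(xp,x'p)\in E$, and orthogonality of the regular epimorphism $p$ against the monomorphism $E\rightarrowtail X\times X$ finally yields $(x,x')\in E$. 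Since the orders induced on $E$ and on $q/q$ as subobjects of $X\times X$ agree (both being the restriction of $\leq_X\times\leq_X$), we conclude $q/q=E$ in $\nc{Ord}(\ca{C})$, exhibiting $E$ as effective and so proving exactness of $\nc{Ord}(\ca{C})$.
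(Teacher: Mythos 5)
Your proof is correct and follows essentially the same route as the paper's: quotient $X$ by the equivalence relation $E\cap E^{\circ}$ using ordinary exactness of $\ca{C}$, endow the quotient with the image order $q(E)$, and check that the resulting $q$ has kernel congruence $E$. The only difference is presentational: the paper carries out the verifications (antisymmetry of $q(E)$ and $q^{-1}(q(E))=E$) purely in the algebraic calculus of relations, e.g. $q^{\circ}qEq^{\circ}q=(E\cap E^{\circ})E(E\cap E^{\circ})=E$, whereas you run the same computations with generalized elements and covers.
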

\begin{proof}
	Suppose that the relation $\begin{tikzcd}(E,\leq_{E})\ar[r,tail,"\langle{e_{0},e_{1}}\rangle"] & (X,\leq_{X})\times (X,\leq_{X})\end{tikzcd}$ is a congruence on $(X,\leq_{X})\in\nc{Ord}(\ca{C})$. Then we have a monomorphism $E\rightarrowtail X\times X\in\ca{C}$, i.e. a relation $E$ on $X$ in $\ca{C}$. Reflexivity and transitivity of the congruence in $\nc{Ord}(\ca{C})$ imply the same properties for the relation $E\colon X\looparrowright X$ in $\ca{C}$. Then $R\coloneqq E\cap E^{\circ}$ is an equivalence relation on $X$ in $\ca{C}$.
	
	Since $\ca{C}$ is exact, there exists an exact sequence $\begin{tikzcd}R\ar[r,shift left=.75ex]\ar[r,shift right=0.75] & X\ar[r,two heads,"q"] & Q\end{tikzcd}$ in $\ca{C}$, which means that $q$ is the coequalizer of $R$ and $R$ is the kernel pair of $q$. Let $\leq_{Q}\coloneqq q(E)$ be the image of $E$ along $q$ in $\ca{C}$. Note that in the calculus of relation in the ordinary regular category $\ca{C}$ we can write $q(E)=qEq^{\circ}$, while exactness of the sequence is equivalent to $q^{\circ}q=R$ and $qq^{\circ}=\Delta_{Q}$.
	
	Now observe that $\leq_{Q}$ is indeed an internal partial order relation in $\ca{C}$. It is reflexive because $E$ is so and $q$ is a regular epimorphism. For transitivity we argue as follows
	\begin{displaymath}
		\leq_{Q}\circ\leq_{Q}=qEq^{\circ}qEq^{\circ}=qEREq^{\circ}=qE(E\cap E^{\circ})Eq^{\circ}=qEq^{\circ}=\leq_{Q}
	\end{displaymath}
	Finally, for anti-symmetry we have 
	\begin{align*}
		\leq_{Q}\cap\leq_{Q}^{\circ} &=qq^{\circ}(\leq_{Q}\cap\leq_{Q}^{\circ})qq^{\circ}=q(q^{\circ}\leq_{Q}q\hspace{1mm}\cap\hspace{1mm} q^{\circ}\leq_{Q}^{\circ}q)q^{\circ} \\
		&=q(E\cap E^{\circ})q^{\circ}=qq^{\circ}qq^{\circ}=\Delta_{Q}
	\end{align*}

	Now we claim that $\begin{tikzcd}(E,\leq_{E})\ar[r,shift left=.75ex,"e_{0}"]\ar[r,shift right=.75ex,"e_{1}"'] & (X,\leq_{X})\end{tikzcd}$ is the kernel congruence of the morphism $q\colon(X,\leq_{X})\to(Q,\leq_{Q})$ in $\nc{Ord}(\ca{C})$. So let $g_{0},g_{1}\colon(Z,\leq_{Z})\to(X,\leq_{X})$ be such that $qg_{0}\leq qg_{1}$. In terms of generalized elements in $\ca{C}$ this means that $(qg_{0},qg_{1})\in_{Z}\leq_{Q}$, which in turn is equivalent to $(g_{0},g_{1})\in_{Z}q^{-1}(\leq_{Q})$. But now observe that
	\begin{displaymath}
		q^{-1}(\leq_{Q})=q^{-1}(q(E))=q^{\circ}qEq^{\circ}q=RER=(E\cap E^{\circ})E(E\cap E^{\circ})=E
	\end{displaymath}
	Thus, $(g_{0},g_{1})\in_{Z} E$ as desired.
\end{proof}
\vspace{3mm}

In particular, if $\ca{C}$ is an ordinary regular category and $\ca{C}_{oex/reg}$ is its ordinary exact completion, then $\nc{Ord}(\ca{C}_{oex/reg})$ is an exact $\nc{Pos}$-category. In the remainder of this paper we want to prove that the latter category is equivalent to the exact completion in the enriched sense of both $\nc{Ord}(\ca{C})$ and $\ca{C}$ itself.

Consider a regular functor $F\colon\ca{C}\to\ca{D}$ between ordinary regular categories. Since $F$ preserves internal partial order relations, we have an induced functor $\nc{Ord}(F)\colon\nc{Ord}(\ca{C})\to\nc{Ord}(\ca{D})$ defined on objects by $(X,\leq_{X})\mapsto(FX,F(\leq_{X}))$. By the construction of finite weighted limits in $\nc{Ord}(\ca{C})$, the fact that $F$ preserves finite limits implies the same for the enriched functor $\nc{Ord}(F)$. Similarly, $F$ preserving regular epimorphisms translates to the fact that $\nc{Ord}(F)$ preserves $\nc{so}$-morphisms. Thus, $\nc{Ord}(F)$ is a regular functor between regular $\nc{Pos}$-categories.

We now turn to discussing how the properties of $F$ being fully faithful and covering translate to properties of $\nc{Ord}(F)$.

\begin{lemma}
	Let $F\colon\ca{C}\to\ca{D}$ be an ordinary regular functor which is fully faithful. Then $\nc{Ord}(F)\colon\nc{Ord}(\ca{C})\to\nc{Ord}(\ca{D})$ is fully order-faithful.
\end{lemma}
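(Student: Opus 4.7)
The plan is to unpack the definition \emph{fully order-faithful} into its two components (fullness on morphisms and the implication $\nc{Ord}(F)(f)\leq\nc{Ord}(F)(g)\Rightarrow f\leq g$) and verify each by a diagram-chase that exploits two features of $F$: it is fully faithful on $\ca{C}$, and, being regular, it preserves finite products so the canonical comparisons $F(X\times X)\to FX\times FX$ are isomorphisms. Everything in the definition of $\nc{Ord}(\ca{C})$—both the objects and the order on the homs—is encoded by commutative squares/triangles built from these finite limits and from subobject inclusions, and fully faithful regular functors transport such data back and forth.

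For order-faithfulness, I would start with $f,g\colon(X,\leq_X)\to(Y,\leq_Y)$ in $\nc{Ord}(\ca{C})$ such that $\nc{Ord}(F)(f)\leq\nc{Ord}(F)(g)$. By the definition of the hom-poset of $\nc{Ord}(\ca{D})$, this produces a morphism $v\colon FX\to F(\leq_Y)$ with $Fm\circ v=\langle Ff,Fg\rangle$, where $m\colon \leq_Y\rightarrowtail Y\times Y$ is the monomorphism witnessing the internal order on $Y$. Apply fullness of $F$ to lift $v$ to a unique $v'\colon X\to\leq_Y$ in $\ca{C}$ with $Fv'=v$; then $F(m\circ v')=Fm\circ v=\langle Ff,Fg\rangle=F(\langle f,g\rangle)$, so by faithfulness (and the fact that $F$ preserves the product $Y\times Y$) we get $m\circ v'=\langle f,g\rangle$, i.e.\ $v'$ witnesses $f\leq g$ in $\nc{Ord}(\ca{C})$.

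For fullness, I would take a morphism $h\colon(FX,F\leq_X)\to(FY,F\leq_Y)$ in $\nc{Ord}(\ca{D})$ and first lift its underlying morphism to a unique $h'\colon X\to Y$ in $\ca{C}$ with $Fh'=h$. The fact that $h$ is a morphism of $\nc{Ord}(\ca{D})$ supplies a (necessarily unique) fill-in $w\colon F\leq_X\to F\leq_Y$ making the square over $h\times h=F(h'\times h')$ commute. Lifting $w$ via fullness gives $w'\colon \leq_X\to\leq_Y$ in $\ca{C}$, and two applications of faithfulness (at the level of morphisms $\leq_X\rightrightarrows Y\times Y$) show that the analogous square in $\ca{C}$ commutes, so $h'$ actually lies in $\nc{Ord}(\ca{C})$ and $\nc{Ord}(F)(h')=h$.

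There is no real obstacle here: the argument is a routine diagram chase, and the only point requiring any care is the bookkeeping involved in identifying $F(X\times X)$ with $FX\times FX$ via the canonical comparison whenever we pass between the witnessing diagrams for internal order in $\ca{C}$ and in $\ca{D}$. This identification is available precisely because regular functors preserve finite limits, so every object and subobject involved in the definition of $\nc{Ord}(\ca{C})$ translates faithfully under $F$.
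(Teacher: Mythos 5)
Your proof is correct and the fullness half coincides with the paper's argument (lift the underlying morphism via fullness of $F$, then reflect the witnessing square for order-preservation via full faithfulness, modulo the comparison isomorphisms $F(X\times X)\cong FX\times FX$). The only real divergence is in the order-faithfulness half: you unwind the definition of the hom-order directly, lifting the witness $v\colon FX\to F(\leq_Y)$ of $Ff\leq Fg$ via fullness and reflecting the resulting triangle via faithfulness, whereas the paper observes that $\nc{Ord}(F)$ is obviously faithful (its action on morphisms is that of $F$) and then invokes the earlier remark that faithfulness coincides with order-faithfulness for a functor preserving inserters. Your version is more self-contained but uses fullness of $F$ where the paper gets by with faithfulness plus preservation of finite limits; both are routine and correct.
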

\begin{proof}
	It is clear that $\nc{Ord}(F)$ is faithful, since its action on morphisms is that of $F$ itself. Since $\nc{Ord}(\ca{C})$ has finite limits and $\nc{Ord}(F)$ preserves them, this is equivalent to order-faithfulness.
	
	Now consider any $h\colon(FX,F(\leq_{X}))\to (FY,F(\leq_{Y}))$. By fullness of $F$, there exists an $f\colon X\to Y\in\ca{C}$ such that $Ff=h$. It suffices then to show that $f$ is order-preserving. For this, observe that $Ff=h$ being a morphism in $\nc{Ord}(\ca{D})$ means that there is a commutative diagram in $\ca{D}$ of the form
	\begin{displaymath}
		\begin{tikzcd}
			F(\leq_{X})\ar[d,tail]\ar[r] & F(\leq_{Y})\ar[d,tail] \\
			F(X\times X)\cong FX\times FX\ar[r,"Ff\times Ff"'] & FY\times FY\cong F(Y\times Y)
		\end{tikzcd}
	\end{displaymath}
	By full faithfulness of $F$ this is then reflected to a commutative diagram in $\ca{C}$ which exhibits $f$ as a morphism $(X,\leq_{X})\to(Y,\leq_{Y})$ in $\nc{Ord}(\ca{C})$.
\end{proof}

\begin{lemma}
	Let $F\colon\ca{C}\to\ca{D}$ be an ordinary regular functor which is covering. Then $\nc{Ord}(F)\colon\nc{Ord}(\ca{C})\to\nc{Ord}(\ca{D})$ is covering.
\end{lemma}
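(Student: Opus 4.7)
The plan is to exploit the fact that any object of $\ca{C}$ can be endowed with the discrete order, so that nothing needs to be done to $\ca{C}$ beyond invoking the covering property of $F$ itself; all the order-theoretic work will happen on the target side.

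More explicitly, given $(Y,\leq_{Y})\in\nc{Ord}(\ca{D})$, I would first forget the order and apply the hypothesis that $F\colon\ca{C}\to\ca{D}$ is covering in the ordinary sense to obtain an object $X\in\ca{C}$ together with a regular epimorphism $p\colon FX\twoheadrightarrow Y$ in $\ca{D}$. Next, equip $X$ with the discrete order $\Delta_{X}\rightarrowtail X\times X$ (i.e.\ the diagonal, which is a bona fide internal partial order in $\ca{C}$), producing an object $(X,\Delta_{X})\in\nc{Ord}(\ca{C})$. Since $F$ preserves finite limits and in particular diagonals, $\nc{Ord}(F)(X,\Delta_{X})=(FX,F(\Delta_{X}))=(FX,\Delta_{FX})$.

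The map $p$ is then automatically a morphism $(FX,\Delta_{FX})\to(Y,\leq_{Y})$ in $\nc{Ord}(\ca{D})$: the condition $(p\times p)(\Delta_{FX})\subseteq\leq_{Y}$ holds because the left-hand side is contained in $\Delta_{Y}$, which is contained in $\leq_{Y}$ by reflexivity. To conclude, I would invoke the lemma established just before \ref{OrdC exact} (the one stating that any morphism in $\nc{Ord}(\ca{D})$ whose underlying morphism in $\ca{D}$ is a strong epimorphism is an $\nc{so}$-morphism in $\nc{Ord}(\ca{D})$); applied to $p$, this yields that $p\colon\nc{Ord}(F)(X,\Delta_{X})\twoheadrightarrow(Y,\leq_{Y})$ is an $\nc{so}$-morphism, which in the regular category $\nc{Ord}(\ca{D})$ is the same thing as an effective morphism. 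Hence $\nc{Ord}(F)$ is covering. There is no real obstacle here, only the small subtlety of checking that the chosen lift exists in $\ca{C}$, which is handled by the discrete-order trick rather than by trying to lift $\leq_{Y}$ itself along $p$ (the latter would require $F$ to be full on subobjects, which is not among the hypotheses).
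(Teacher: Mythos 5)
Your proposal is correct and follows essentially the same route as the paper: cover $Y$ in $\ca{D}$ by some $FX$, put the discrete order on $X$, and use the lemma that a morphism of $\nc{Ord}(\ca{D})$ whose underlying morphism is a regular (strong) epimorphism is an $\nc{so}$-morphism. The remark that lifting $\leq_{Y}$ itself is unnecessary is exactly the point of the discrete-order trick the paper also uses.
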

\begin{proof}
	Consider any object $(Y,\leq_{Y})\in\nc{Ord}(\ca{D})$. Since $F$ is covering, we can find a regular epimorphism $q\colon FX\twoheadrightarrow Y$ in $\ca{D}$. We can then consider $FX$ with the discrete order and so we have an object $(FX,\Delta_{FX})\in\nc{Ord}(\ca{D})$. Since $q$ is a regular epimorphism in $\ca{D}$, we have an $\nc{so}$-morphism $q\colon(FX,\Delta_{FX})\twoheadrightarrow (Y,\leq_{Y})$ in $\nc{Ord}(\ca{D})$. That is to say, we have an $\nc{so}$-morphism $q\colon\nc{Ord}(F)(X,\Delta_{X})\twoheadrightarrow(Y,\leq_{Y})$ and so $\nc{Ord}(F)$ is covering.
\end{proof}

Putting everything together we now obtain the main result of this section.

\begin{proposition}
	For any regular ordinary category $\ca{C}$ there is an equivalence of $\nc{Pos}$-categories $\nc{Ord}(\ca{C}_{oex/reg})\simeq\nc{Ord}(\ca{C})_{ex/reg}\simeq\ca{C}_{ex/reg}$, where $\ca{C}_{oex/reg}$ denotes the exact completion of $\ca{C}$ as an ordinary category.
\end{proposition}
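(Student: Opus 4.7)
The strategy is to apply Theorem \ref{Characterization of ex/reg} twice, identifying both $\nc{Ord}(\ca{C})_{ex/reg}$ and $\ca{C}_{ex/reg}$ with the common category $\nc{Ord}(\ca{C}_{oex/reg})$. The latter is already exact by \ref{OrdC exact} applied to the ordinary exact category $\ca{C}_{oex/reg}$, so in each case it remains to exhibit a regular functor into $\nc{Ord}(\ca{C}_{oex/reg})$ that is fully order-faithful and covering.

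For the first equivalence, let $\gamma\colon\ca{C}\to\ca{C}_{oex/reg}$ be the canonical embedding into the ordinary exact completion; it is regular, fully faithful, and covering. The induced $\nc{Pos}$-functor $\nc{Ord}(\gamma)\colon\nc{Ord}(\ca{C})\to\nc{Ord}(\ca{C}_{oex/reg})$ is regular by the discussion preceding the proposition, and it is fully order-faithful and covering by the two lemmas immediately prior to the statement. Theorem \ref{Characterization of ex/reg} then yields $\nc{Ord}(\ca{C})_{ex/reg}\simeq\nc{Ord}(\ca{C}_{oex/reg})$.

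For the second equivalence, I plan to work with the composite
\begin{displaymath}
F\colon\ca{C}\xrightarrow{\gamma}\ca{C}_{oex/reg}\xrightarrow{D}\nc{Ord}(\ca{C}_{oex/reg}),
\end{displaymath}
where $D$ sends each object to itself equipped with the discrete order, $Y\mapsto (Y,\Delta_Y)$. This composite is a regular $\nc{Pos}$-functor: $\gamma$ is regular both in the ordinary and the (locally discrete) $\nc{Pos}$-enriched sense, while $D$ preserves finite products, sends inserters of parallel pairs to equalizers of the underlying maps in $\ca{C}_{oex/reg}$, and sends regular epimorphisms to $\nc{so}$-morphisms by the lemma appearing just before \ref{OrdC exact}. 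The covering property is verified as follows: for any $(Y,\leq_Y)\in\nc{Ord}(\ca{C}_{oex/reg})$, covering of $\gamma$ supplies a regular epimorphism $q\colon\gamma X\twoheadrightarrow Y$ in $\ca{C}_{oex/reg}$, and $q$ defines a morphism $(\gamma X,\Delta_{\gamma X})\to(Y,\leq_Y)$ in $\nc{Ord}(\ca{C}_{oex/reg})$ (the order-preservation condition is trivial) which is an $\nc{so}$-morphism for the same reason as above. Full order-faithfulness is essentially automatic: faithfulness and fullness of $F$ on hom-posets follow from the corresponding properties of $\gamma$ together with the observation that morphisms between discretely ordered objects in $\nc{Ord}(\ca{C}_{oex/reg})$ coincide with morphisms of the underlying objects, while order-faithfulness is vacuous since the source $\ca{C}$ is locally discrete. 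Thus \ref{Characterization of ex/reg} applies to $F$ and delivers $\ca{C}_{ex/reg}\simeq\nc{Ord}(\ca{C}_{oex/reg})$.

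The only genuine point requiring care is verifying the covering condition at the ordered level in each case; both reductions rest on the fact (established in Section 4) that $\nc{so}$-morphisms in $\nc{Ord}(-)$ are precisely those whose underlying map is a regular epimorphism, so surjections lifted from $\ca{C}_{oex/reg}$ may be freely transported to any specified order structure on the target.
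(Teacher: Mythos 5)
Your proof is correct and follows essentially the same route as the paper: both equivalences are obtained by applying \ref{Characterization of ex/reg} to a fully order-faithful, covering regular functor into the exact category $\nc{Ord}(\ca{C}_{oex/reg})$, whose exactness comes from \ref{OrdC exact}. The only cosmetic difference is that for the second equivalence the paper factors the relevant composite as $\ca{C}\to\nc{Ord}(\ca{C})\to\nc{Ord}(\ca{C}_{oex/reg})$, whereas you factor the same functor (up to isomorphism) as $\ca{C}\to\ca{C}_{oex/reg}\to\nc{Ord}(\ca{C}_{oex/reg})$.
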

\begin{proof}
	The ordinary regular functor $\Gamma\colon\ca{C}\to\ca{C}_{oex/reg}$ is fully faithful and covering. By the preceding lemmas we have then that $\nc{Ord}(\Gamma)\colon\nc{Ord}(\ca{C})\to\nc{Ord}(\ca{C}_{oex/reg})$ satisfies the same properties in the enriched sense. Furthermore, the category $\nc{Ord}(\ca{C}_{oex/reg})$ is exact by \ref{OrdC exact} and thus from \ref{Characterization of ex/reg} we deduce that $\nc{Ord}(\ca{C}_{oex/reg})\simeq\nc{Ord}(\ca{C})_{ex/reg}$.
	
	Similarly, the composite functor $\ca{C}\to\nc{Ord}(\ca{C})\to\nc{Ord}(\ca{C}_{oex/reg})$ is regular, fully faithful and covering, being a composition of two functors satisfying these properties. Again by \ref{Characterization of ex/reg} we conclude that $\ca{C}_{ex/reg}\simeq\nc{Ord}(\ca{C}_{oex/reg})$.
\end{proof}

\vspace{3mm}

\subsection*{Acknowledgements}

I would like to thank my PhD advisors, Marino Gran and Panagis Karazeris, for their guidance during the time in which the research contained in this article was conducted. I am also indebted to Pierre-Alain Jacqmin for reading an earlier version of this paper and providing invaluable comments and feedback which improved the quality of the work. I furthermore thank Christina Vasilakopoulou and Konstantinos Tsamis for useful conversations regarding the topics of this paper. Special thanks are due to the anonymous referee, whose useful comments improved the presentation. The present work was financially supported by the Conseil de Recherche of the Universit\'e Catholique de Louvain in the form of a ``Fonds Sp\'eciaux de Recherche'' grant, for which I express my sincere gratitude.

\vspace{3mm}


\begin{thebibliography}{9}
	
	\bibitem{ADV} J. Ad\'amek, M. Dost\'al, J. Velebil, \emph{A categorical view of varieties of ordered algebras}, arXiv preprint:2011.13839 (2020).
	
	\bibitem{AFMS} J. Ad\'amek, C. Ford, S. Milius, L. Schr\"oder, \emph{Finitary monads on the category of posets}, arXiv preprint:2011.14796 (2020).
	
	\bibitem{Panagis & me} V. Aravantinos-Sotiropoulos, P. Karazeris, \emph{A property of effectivization and its uses in categorical logic}, Theory and Applications of Categories, 32 (2017), pp. 769-779.
		
	\bibitem{Barr} M. Barr, \emph{Exact categories and categories of sheaves}, Lecture Notes in Mathematics, vol. 236 Springer (1970).
	
	\bibitem{Bloom & Wright} S. L. Bloom, J. B. Wright, \emph{P-varieties: A signature independent characterization of varieties of ordered algebras}, Journal of Pure and Applied Algebra, 29 (1983), pp. 13-58.
	
	\bibitem{Handbook} F. Borceux, \emph{Handbook of Categorical Algebra 2} vol. 51 of Encyclopedia of Mathematics and its Applications, Cambridge University Press, Cambridge (1994).
	
	\bibitem{2-dim reg & ex} J. Bourke, R. Garner, \emph{Two-dimensional regularity and exactness}, Journal of Pure and Applied Algebra, 218 (2014), 1346-1371.
	
	\bibitem{S-Pos} S. Bulman-Fleming, M. Mahmoudi, \emph{The Category of S-Posets}, Semigroup Forum, 71 (2005), pp. 443-461.
	
	\bibitem{Freyd-Scedrov} P. J. Freyd, A. Scedrov, \emph{Categories, allegories}, Mathematical Library Vol 39, North-Holland (1990).
	
	\bibitem{Elephant} P.T. Johnstone, \emph{Sketches of an Elephant: A Topos Theory Compendium}, vol 43 \& 44 of Oxford Logic Guides, The Clarendon Press, New York (2002).
	
	\bibitem{Finite Limits Enriched} G. M. Kelly, \emph{Structures defined by finite limits in the enriched context}, Cah. Top. Geom. Diff Categoriques, tome 23, no1 (1982), p. 3-42
	
			
	\bibitem{Kelly Relations relative to factorizations} G. M. Kelly, \emph{A note on relations relative to a factorization system}, Proceedings of the conference ``Category Theory '90'' held in Como, Italy, July 22-28 1990.
	
	
	\bibitem{Kelly Enriched Categories} G. M. Kelly, \emph{Basic Concepts of Enriched Category Theory}, London Mathematical Society Lecture Note Series 64, Cambridge University Press 1982.

	\bibitem{Kurz-Velebil} A. Kurz, J. Velebil, \emph{Quasivarieties and varieties of ordered algebras: regularity and exactness}, Mathematical Structures in Computer Science, 27 (2017), pp. 1153-1194.
	
	\bibitem{Perugia notes} F.W. Lawvere, \emph{Teoria delle categorie sopra un topos di base}, Lecture Notes, University of Perugia, (1972).
	
	\bibitem{Makkai-Reyes} M. Makkai, G. E. Reyes, \emph{First Order Categorical Logic}, Lecture Notes in Mathematics 611, Springer-Verlag, Berlin 1977.
	
	\bibitem{Meisen} J. Meisen, \emph{On bicategories of relations and pullback spans}, Communications in Algebra, (1974), pp. 377-401.
	
	\bibitem{Nachbin} L. Nachbin, \emph{Topology and Order}, no. 4 in van Nostrand Mathematical Studies, Princeton, NJ, 1965.
	
	\bibitem{Priestley} H. A. Priestley, \emph{Ordered topological spaces and the representation of distributive lattices}, Proceedings of the London Mathematical Society, 3 (1972), pp. 507-530.
	
	\bibitem{Richter} G. Richter, \emph{Mal'cev conditions for categories}, Categorical Topology, Proc. Conference Toledo, Ohio 1983 (Heldermann Verlag, Berlin, 1984), pp. 453-469.
	
	\bibitem{SC} R. Succi Cruciani, \emph{La teoria delle relazioni nello studio di categorie regolari e di categorie esatte}, Riv. Mat. Univ. Parma (4) 1 (1975), pp. 143-158.
	
\end{thebibliography}
\end{document}